\theoremstyle{definition}
\newtheorem{theorem}{Theorem}
\newtheorem{lemma}[theorem]{Lemma}
\newtheorem{prop}[theorem]{Proposition}
\newtheorem{definition}{Definition}
\newtheorem{rem}{Remark}
\newcommand{\subscript}[2]{$#1 _ #2$}
\newlist{assumplist}{enumerate}{1}
\setlist[assumplist]{label=(\subscript{\textbf{A}}{{\arabic*}}),resume=count_cond}
\Crefname{assumplisti}{Assumption}{Assumptions}
\renewcommand\labelenumi{(\roman{enumi})}
\renewcommand\theenumi\labelenumi
\newcommand{\ak}[1]{\todo[color=green!40,size=\footnotesize]{ \textbf{AK:}  #1}}
\newcommand{\arginf}{\mathrm{arginf}}
\newcommand{\argmin}{\mathrm{argmin}}
\newcommand{\data}{\mathcal{D}}
\newcommand{\eqdef}{:=}
\newcommand{\eqsp}{\;}
\DeclareMathOperator{\KL}{KL}
\newcommandx{\kl}[2][1=q, 2=f]{\KL\lr{#1 \|#2}}
\newcommand{\lr}[1]{\left(#1 \right)}
\newcommand{\lrb}[1]{\left[#1 \right]}
\newcommand{\lrcb}[1]{\left\{#1 \right\}}
\newcommand{\nset}{\mathbb{N}}
\newcommand{\nstar}{\mathbb{N}^\star}
\newcommand{\PP}{\mathbb{P}}
\newcommand{\PQ}{\mathbb{Q}}
\newcommand{\tq}{\tilde q}
\newcommand{\sq}{q^{\star}}
\newcommand{\rmd}{\mathrm d}
\newcommand{\rset}{\mathbb{R}}
\newcommand{\Rset}{\mathbb{R}}
\newcommand{\Q}{\mathcal{Q}}
\newcommand{\Var}{\mathbb{V}\mathrm{ar}}
\newcommandx{\wEta}[1][1=\eta_k]{W_{n,k}^{(#1)}} 
\title{Adaptive Importance Sampling meets Mirror Descent: a Bias-variance tradeoff}
\author{Anna Korba\footnote{CREST, ENSAE, Institut Polytechnique de Paris}\ \  and François Portier\footnote{CREST, ENSAI, address: \texttt{francois.portier@gmail.com}}}
\begin{document}
	
	\maketitle
	
%
%
%

	\begin{abstract}

Adaptive importance sampling is a widely spread Monte Carlo technique that uses a re-weighting strategy to iteratively estimate the so-called target distribution. A major drawback of adaptive importance sampling is the large variance of the weights which is known to badly impact the accuracy of the estimates. This paper investigates a regularization strategy whose basic principle is to raise the importance weights at a certain power. This regularization parameter, that might evolve between zero and one during the algorithm, is shown (i) to balance between the bias and the variance and (ii) to be connected to the mirror descent framework. Using a kernel density estimate to build the sampling policy, the uniform convergence is established under mild conditions. Finally, several practical ways to choose the regularization parameter are discussed and the benefits of the proposed approach are illustrated empirically.
	\end{abstract}

	\section{Introduction}

Many machine learning problems rely on our ability to compute the expected value of a function $g$ of interest according to a target distribution $f$. Unfortunately, these calculations may be hindered by the fact that we are unable to sample from $f$, which is often intractable. 
Adaptive Importance Sampling (AIS) is one increasingly popular way to tackle this problem in the machine learning literature.  
It is notably used in stochastic optimization to achieve variance reduction and speed-up of stochastic gradient descent \cite{el2020adaptive,needell2014stochastic,zhao2015stochastic,bouchard2015accelerating,stich2017safe,katharopoulos2018not,johnson2018training,csiba2018importance}. 
In addition, it is commonly used in reinforcement learning \cite{huang2020importance, liu2020understanding} and has also been integrated recently to the tuning of bayesian neural networks \cite{ariafar2021faster}.


The idea of AIS is to sample from an alternative, simpler \textit{proposal} probability density $q_k$ at time $k$ of the algorithm to approximate $f$.
It updates the proposal distribution at each iteration based on samples from the previous iteration and their associated importance sampling weights, in an online manner.  AIS methods in the literature differ on the type or number of proposals, on the weighting scheme, and on the update of the proposal. Most methods use parametric proposals, e.g. Gaussians \cite{martino2015adaptive} or generic parametric proposals \cite{cornuet2012adaptive}, and update the parameters of these proposals at each iteration based on weighted moment estimation of the target distribution. Alternatively, nonparametric methods were proposed using kernel density estimates of the target \cite{west1993approximating,givens1996local,zhang1996nonparametric,neddermeyer2009computationally}.

It is well-known that a bad proposal can lead to a high variance in high dimension \cite{bengtsson2008curse} 
and part of the art of using AIS then consists in selecting a \textit{policy} $(q_k)_{k \geq 1}$, i.e. a sequence of proposals, that is practical and provides good efficiency gains, especially as the dimension increases \cite{ho+b:1992,owen+z:2000,douc+g+m+r:2007b,hartmann2021nonasymptotic, portier2018asymptotic} (see \cite{bugallo2017adaptive,elvira2021advances} for detailed reviews). 

In this paper, we propose a new non parametric Adaptive Importance Sampling method, that (i) introduces a new regularization strategy which raises adaptively the importance sampling weights to a certain power ranging from $0$ to $1$ (ii) uses a mixture between a kernel density estimate of the target and a safe reference density as proposal. In particular, (i) will be connected 
to the entropic mirror descent algorithm on the space of probability distributions as in \cite{dai2016provable}
, while (ii) will be closely related to the safe adaptive importance sampling approach of \cite{delyon2021safe}. We prove that this algorithm enjoys a uniform convergence result and we exhibit a way to carry out the regularisation procedure adaptively. On practical AIS applications, this procedure allows us
to make significant speed-ups and performance gains empirically compared to the (classical) AIS framework.



The paper is organized as follows. In \Cref{sec:moti}, we motivate our regularisation strategy by showing how it balances between the bias and the variance in an importance sampling setting. Furthermore, we underline its connections with an entropic mirror descent algorithm over the space of probability distributions. \Cref{sec:rais} is devoted to presenting our Safe and Regularised Adaptive Importance Sampling (SRAIS) approach. Notably, we state our theoretical guarantees and we describe our regularization schedule at each iteration. Lastly, our
numerical results are to be found in \Cref{sec:numerical}.

In the whole paper, all densities are with respect to the Lebesgue measure and $\int g $ denotes the integral of $g$ with respect to the Lebesgue measure.

\section{Motivation}
\label{sec:moti}




Given a target density function $f$ defined on $\mathbb R^d$, our goal is to compute $\int gf$ for some (possibly many) integrable functions $g$. 
%
%
In many situations it is impossible or too complicated to get samples from $f$ and use a classical Monte Carlo estimator; in which case we resort to Importance Sampling (IS) methods.

Letting $q$ be another density function defined on $\mathbb R^d$ and assuming that $X$ is a random variable with distribution $q$, the basic idea of IS is to re-weight $g(X)$ by $W(X) = f(X)/q(X)$, the so-called \textit{importance weight}. Whenever $q$ dominates $f$, using the fact that $\mathbb E [W(X)g(X)] = \int g  f$  and samples $X_1,\dots,X_n\sim q$ , one can build 
an (unbiased) IS estimator of $ \int g  f$ as
\begin{equation}\label{eq:IS}
	\int gf	\approx \frac{1}{n}\sum_{k=1}^n \frac{f(X_k)}{q(X_k)}g(X_k) =  \frac{1}{n}\sum_{k=1}^n W(X_k) g(X_k) .
\end{equation}

\begin{rem}\label{rk:snis}
	In many applications, e.g. in Bayesian inference, $f$ is often only known up to a normalization constant. In this case, one can use the normalized counterpart of \eqref{eq:IS}, namely the self-normalized importance sampling (SNIS) estimator defined as $\sum_{k=1}^n W(X_k) g(X_k)/\sum_{k=1}^n W(X_k) $,  where $X_k \sim q$; and which suffers from a small bias of order $\mathcal{O}(n)$ \cite{owen2013monte}.
\end{rem}
However, the main issue observed in practice is that a large variability of the importance weights results in a high variance of the IS estimators \cite{bengtsson2008curse}. This is certainly the case when $q$ is not sufficiently close to $f$. To tackle this problem, we introduce the concept of \textit{regularized importance sampling}, which we describe now.

Regularized importance sampling consists in using \textit{regularized} weights of the form $W(X)^\eta$, $\eta\in (0,1)$ instead of working with the classical importance weight $W(X)$. In doing so and as written in  \Cref{lem:bias_variance} below, we will be able to diminish the variance at the cost of increasing the bias, which illustrates the typical bias-variance tradeoff and justifies the use of the term regularized importance sampling.
\begin{lemma}\label{lem:bias_variance}
 Suppose that $q$ dominates $f$ and define $W(X) = f(X)/q(X) $ with $X$ having density $q$. For all $\eta\in (0,1]$, it holds that
$\mathbb{E} [W(X)^\eta] \leq 1 
\text{ and }
\Var [W(X)^\eta] \leq \Var[W(X)]$.
If $q\neq f$, then both inequalities become equalities if and only if $\eta = 1$.
\end{lemma}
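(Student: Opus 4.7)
The plan is to reduce both claims to elementary properties of the real function $\phi(x) = x^\eta$ on $[0,\infty)$, evaluated at the random variable $W = W(X)$, using only the normalizing identity $\mathbb{E}[W] = \int f = 1$.

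For the first bound $\mathbb{E}[W^\eta] \leq 1$, my approach is to apply Jensen's inequality to $\phi$, which is concave for $\eta \in (0,1]$; this immediately yields $\mathbb{E}[W^\eta] \leq \phi(\mathbb{E}[W]) = 1$. Equivalently, one can rewrite $\mathbb{E}[W^\eta] = \int f^\eta q^{1-\eta}$ and invoke Hölder's inequality with exponents $1/\eta$ and $1/(1-\eta)$ to reach the same conclusion. Strict concavity for $\eta \in (0,1)$ ensures that equality holds iff $W$ is $q$-a.s.\ constant, and combined with $\mathbb{E}[W] = 1$ this forces $f = q$.

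For the variance bound I would first invoke the elementary fact $\Var[Y] \leq \mathbb{E}[(Y - 1)^2]$ (the variance is the minimum mean-square deviation) to write $\Var[W^\eta] \leq \mathbb{E}[(W^\eta - 1)^2]$. The core step is then the pointwise inequality $(w^\eta - 1)^2 \leq (w - 1)^2$, valid for all $w \geq 0$ and $\eta \in (0,1]$, which I would verify by a case split: on $w \geq 1$ one has $1 \leq w^\eta \leq w$, while on $0 \leq w \leq 1$ one has $0 \leq w \leq w^\eta \leq 1$, and in both regimes the two sides of the inequality are ordered in the required direction. Integrating against $q$ produces $\mathbb{E}[(W^\eta - 1)^2] \leq \mathbb{E}[(W - 1)^2] = \Var[W]$, and chaining the two bounds delivers the claim.

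For the equality statement, when $\eta = 1$ both inequalities are trivially equalities. Conversely, when $q \neq f$ and $\eta \in (0,1)$, the strict form of the Jensen step above already gives $\mathbb{E}[W^\eta] < 1$, which upgrades the auxiliary bound $\Var[W^\eta] \leq \mathbb{E}[(W^\eta - 1)^2]$ to a strict inequality and hence forces $\Var[W^\eta] < \Var[W]$. No serious obstacle is anticipated; the only delicate point is tracking the equality case through the two-step chain in the variance bound, and this is cleanly resolved by using strictness of Jensen rather than strictness of the pointwise inequality.
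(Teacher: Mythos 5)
Your proof is correct and follows essentially the same route as the paper's: Jensen's inequality applied to the concave map $w\mapsto w^\eta$ for the first bound, and the chain $\Var[W^\eta]\leq \mathbb E[(W^\eta-1)^2]\leq \mathbb E[(W-1)^2]=\Var[W]$ via the pointwise inequality $|w^\eta-1|\leq|w-1|$ for the second. Your treatment of the equality case (using strict Jensen to make $(\mathbb E[W^\eta]-1)^2>0$ and hence the variance inequality strict) is in fact slightly more explicit than the paper's.
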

The proof of this lemma is deferred to \Cref{sec:proof_lemma_bias_variance} and we now make a few comments.
A first comment is that via a proper choice of the regularization parameter $\eta$, one will be able to balance between the bias and the variance in IS. A second comment is that if we consider the estimate $W(X)^\eta g(X)$ of $\int g  f $ for some regularization parameter $\eta \in (0,1)$, it has the following expectation
$$\mathbb E [W(X)^\eta g(X)] = \int f ^\eta q  ^{1-\eta}  g$$
when $X$ is generated from the density $q$.  
This suggests that regularized importance sampling moves from an initial density $q$ to $f ^\eta q  ^{1-\eta} $ (up to a normalization constant) while the classical IS case $\eta = 1$ directly targets $f$ starting from $q$. 

\begin{rem}
	The latter regularization differs from the one considered in simulated annealing (or tempering) \cite{zhou2016toward}, where $f^{\eta}$  interpolates between $q$ the uniform distribution to $f$, as (\textit{the temperature}) $\eta$  increases from $0$ to $1$. In this setting, the importance weights would be proportional to $f^{\eta}/q$ instead of $(f/q)^{\eta}$.
\end{rem}

The regularization of IS weights we consider actually relates to a well-known scheme 
 called \textit{entropic mirror descent} \cite{beck2003mirror}.
This scheme has been widely used in the machine learning literature, e.g. in variational inference
\cite{dai2016provable,daudel2020infinite}, reinforcement learning \cite{orabona2019modern} (sometimes referred to continuous exponential weights),  optimal transport \cite{mishchenko2019sinkhorn} or to train generative models \cite{hsieh2019finding}.
Given an initial probability density function $q_1$, the sequence of proposals $(\sq_k)_{k \geq 1}$ obtained by entropic mirror descent for Kullback-Leibler minimisation satisfies:
\begin{align} \label{eq:emd}
\sq_{k+1} \propto f^{\eta_k}  (\sq_k)^ {1-\eta_k} \eqsp, \quad k \geq 1 \eqsp,
\end{align}
where $(\eta_k)_{k \geq 1} $ is a sequence of positive learning rates (we refer to \Cref{sec:NoteEMD} for a derivation of this iterative formula from 
an optimisation perspective). When $(\eta_k)_{k \geq 1}$ is valued in $(0,1]$, which will be assumed throughout the paper, the iterative scheme defined by \eqref{eq:emd} is attractive as it satisfies the following lemma, whose proof can be found in \Cref{sec:proof_lemma_tv}.

\begin{lemma}\label{lemma:tv}
Let $(\eta_k)_{k \geq 1}$ valued in $(0,1]$ and $(\sq_k)_{k\geq 1}$ be defined by \eqref{eq:emd} starting from an initial probability density function $q_1$. Then, for all $n \in \nstar$,
\begin{align*}
\int | f - \sq_{n+1} | \leq \sqrt{ 2 \kl[f][q_1] }  \prod_{k=1}^{n}  ( 1-\eta_k )^{1/2 },
\end{align*}
where $\kl[f][q_1] = \int \log\lr{f/ q_1} f$ is the Kullback-Leibler divergence between $f$ and $q_1$.
\end{lemma}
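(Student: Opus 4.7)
The plan is to combine Pinsker's inequality with a one-step contraction bound on the Kullback--Leibler divergence along the mirror descent recursion. Concretely, Pinsker gives $\int |f - \sq_{n+1}| \leq \sqrt{2\,\kl[f][\sq_{n+1}]}$, so it suffices to establish the geometric decay
\begin{align*}
\kl[f][\sq_{n+1}] \leq \kl[f][q_1] \prod_{k=1}^{n} (1-\eta_k).
\end{align*}

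The key step is a one-step inequality: I claim that $\kl[f][\sq_{k+1}] \leq (1-\eta_k)\kl[f][\sq_k]$ for every $k\geq 1$. To see this, write the normalizing constant of the mirror descent update as $Z_k = \int f^{\eta_k} \sq_k^{1-\eta_k}$, so that $\sq_{k+1} = f^{\eta_k} \sq_k^{1-\eta_k}/Z_k$. Expanding $\log(f/\sq_{k+1})$ and integrating against $f$ gives
\begin{align*}
\kl[f][\sq_{k+1}] = (1-\eta_k)\,\kl[f][\sq_k] + \log Z_k.
\end{align*}
It remains to show $\log Z_k \leq 0$, i.e.\ $Z_k \leq 1$. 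But
\begin{align*}
Z_k = \int \lr{f/\sq_k}^{\eta_k} \sq_k = \mathbb{E}\lrb{W(X)^{\eta_k}}, \qquad X \sim \sq_k,
\end{align*}
with $W = f/\sq_k$, and Lemma~\ref{lem:bias_variance} (or directly Jensen's inequality applied to the concave map $x\mapsto x^{\eta_k}$ on $(0,1]$) gives $\mathbb{E}[W(X)^{\eta_k}] \leq 1$.

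From here, iterating the one-step bound from $k=n$ down to $k=1$ yields $\kl[f][\sq_{n+1}] \leq \kl[f][q_1] \prod_{k=1}^{n} (1-\eta_k)$, and combining with Pinsker finishes the proof. I do not anticipate any real obstacle: the only subtlety is recognizing that the log of the normalizing constant $Z_k$ is nonpositive, which is exactly the bias statement $\mathbb{E}[W^{\eta}] \leq 1$ already proved in Lemma~\ref{lem:bias_variance}; everything else is an algebraic rearrangement plus Pinsker.
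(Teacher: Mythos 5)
Your proof is correct and follows essentially the same route as the paper: a one-step KL contraction $\kl[f][\sq_{k+1}] \leq (1-\eta_k)\kl[f][\sq_k]$ obtained by controlling the normalizing constant $Z_k=\int f^{\eta_k}(\sq_k)^{1-\eta_k}\leq 1$ via Jensen, followed by recursion and Pinsker's inequality. The only cosmetic difference is that the paper routes the normalization step through an extended (unnormalized) KL divergence and the bound $\log u \leq u-1$, whereas you absorb $\log Z_k \leq 0$ directly; the mathematical content is identical.
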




\begin{rem} When $\kl[f][q_1] $ is finite,
\Cref{lemma:tv} yields different convergence rates depending on the regularization schedule. Taking $\eta_k = c  / k$ with $0< c< 1$ yields an $O(   n^{-c / 2}   ) $ convergence rate, while taking $\eta_k = c / {k}^\beta$ with $0< c< 1$ and $\beta\in [0,1)$ yields an $O(   \exp( - C n^{(1-\beta) }   )     )  $  convergence rate, for some $C>0$ (details are given in \Cref{sec:proof_lemma_tv} right after the proof of \Cref{lemma:tv}).
Note that we recover the case of the constant learning policy when $\beta = 0$
. This is in contrast with convergence results from the optimisation 
 literature, as recalled in \Cref{lemma:cv:emd} of \Cref{sec:NoteEMD}. In the latter, convergence is established under stronger assumptions on 
the iterates $q_k$ (namely, log boundedness of the ratios $\sq_k/f$)
 and on the learning policy (e.g, $\eta_k = c/\sqrt{k}$ , i.e. $\beta=1/2$
, which yields a $O(\log(n)/\sqrt{n})$ convergence rate). 
\end{rem}

Unfortunately, running iteration \eqref{eq:emd} is not feasible for two reasons: (i) each iteration depends on the whole function $f$ (not some evaluations of $f$), which is unknown; (ii) even when $f$ is known, computing $\sq_{k+1}(x)$ can be intractable due to the normalization following \eqref{eq:emd} to ensure $\sq_{k+1}$ is a probability distribution density .


\section{Safe and Regularized Adaptive Importance Sampling}\label{sec:rais}
This section introduces Safe and Regularized Adaptive Importance Sampling and relates it to the entropic mirror descent algorithm presented in the previous section.

\subsection{Proposed scheme}
We propose an \textit{Adaptive Importance Sampling} (AIS) method, which in contrast with classical Importance Sampling \eqref{eq:IS} which uses a unique proposal distribution $q$, uses a  \textit{policy}, i.e. a sequence of proposals $(q_k)_{k\ge0}$ in an online manner. We define this notion rigourously below.
\begin{definition}
Let $(X_k)_{k\geq 1}$ a  sequence of random variables defined on $(\Omega, \mathcal F,\mathbb P)$, and define $\mathcal F_{k} =\sigma(X_1,\ldots, X_{k})$ for $k\geq 1$ and $\mathcal F_0 = \emptyset$. The sequence $(X_k)_{k\geq 1}$  is said to have policy $(q_k)_{k\geq 0}$ whenever for all $k \geq 1$,
$X_ k \sim q_{k-1}$,
conditionally to $\mathcal F_{k-1} $.
\end{definition}
However, in AIS, one should ensure that the proposal $q_k$ sufficiently covers the support of the target $f$ at each iteration $k\ge 0$.
Among recent approaches,  \textit{safe adaptive importance sampling} (SAIS) \cite{delyon2021safe} proposes to 
choose the policy $(q_k)_{k\geq 0}$ as a mixture between a certain kernel density estimate $f_k$ of $f$ and some “safe” probability density $q_0$ with heavy tails compared to the ones of $f$. Letting $(\lambda_k)_{k\geq 1}$ valued in $[0,1]$ be a decreasing sequence, we seek $(q_k)_{k\geq 0}$ under the form
\begin{align}\label{eq:sampler_update}
q_{k}  = (1-\lambda_k)  f_k + \lambda_k q_0 \eqsp, \qquad \forall k\geq 1 \eqsp.
\end{align}
In this way, the mixture with $q_0$ shall prevent too small values for $q_k$ and a too high variance on the importance weights \cite{delyon2021safe}. In contrast, the other part in the mixture, $f_k$, is meant to speed-up the convergence of $q_k$ to $f$.

Let us now define the kernel density estimate $f_k$ of $f$ more precisely, which will differ from the one used in \cite{delyon2021safe}. For this purpose, let $K:\mathbb{R}^d \to \mathbb \Rset^{+}$ be a density called \textit{kernel} and for a \textit{bandwith} $h\ge 0$, define $K_h(u) = K(u/h)/h^d$. Let $(h_k)_{k\geq 1}$ valued in $\Rset^{+}$ be a sequence of 
\textit{bandwidths}.
The kernel estimate of $f$ at step $n$ denoted $f_n$, is defined  by
\begin{equation}\label{eq:kernelEstimate}
f_n (x) = \sum_{k=1}^n \wEta K_{h_n} (x - X_k) \eqsp ,\qquad\forall x\in \mathbb R^d,
\end{equation}
where $\forall k=1,\ldots, n,$
\begin{equation}
\wEta \propto W_k^{\eta_k} =\left(\frac{f (X_k) }{ q_{k-1}(X_k)}\right)^{\eta_k}
  \text{ s.t. }  \sum_{k=1}^n \wEta = 1.\label{eq:normalised_weights}
\end{equation}
In other words, $f_n$ is based on the kernel evaluated at points $(X_k)_{k=1}^n$, where each point $X_k$ is weighted by  $\wEta$ for $k=1,\dots,n$. The weights  $(\wEta)_{k=1}^n$ correspond to the \textit{normalized} counterpart of the \textit{regularized} (i.e., raised to the power $\eta_k$) importance weights $(W_k)_{k=1}^n$.
The algorithm resulting from the previous description can be written as follows.

%
%
%
%
%
%

\begin{algorithm}
\caption{\em Safe and Regularized Adaptive Importance sampling (SRAIS)} \label{algo:RIS}
\textbf{Inputs}: The safe density $q_0$, the sequences of  bandwidths $(h_k)_{k = 1,\ldots, n}$, mixture weights $(\lambda_k)_{k = 1,\ldots, n}$, learning rates $(\eta_k)_{k=1, \ldots, n}$. \smallskip

For $k= 0,1,\ldots, n-1 $:
 \begin{enumerate}
   \item Generate $X_{k+1}\sim q_k$.
   \item Compute (a) $W_{k+1} = f(X_{k+1} ) / q_k(X_{k+1})$ and (b) $(W_{k+1, j}^{(\eta_{j})})_{1 \leq j \leq k+1}$.
   \item Return $q_{k+1}=(1-\lambda_{k+1})f_{k+1}+\lambda_{k+1} q_0$ where $f_{k+1}=\sum_{j=1}^{k+1} W_{k+1, j}^{(\eta_{j})}K_{h_{k+1}}(\cdot -X_{j})$.
 \end{enumerate}
\end{algorithm}
The algorithm consists in three steps: (1) sample a new particle from the current proposal, (2) compute the new importance weight and its normalized, regularized counterpart, (3) update the kernel density estimate of the target, and the proposal as the mixture of this  estimate and the safe density.

In 
\Cref{algo:RIS}, we recognize at each iteration $k$ a stochastic approximation of the mirror descent scheme \eqref{eq:emd}, as the expectation of $W_k^{\eta_k} K_{h_k} (x -X_k)$ 
w.r.t. $q_k$  is equal to $(f^{\eta_k} q_{k-1}^{1 -\eta_k}  \star K_{h_k} ) (x)  $, where $\star$ denotes the convolution operator.  From classical convolution results \cite{tsybakov2008introduction}, the latter quantity approximates $f^{\eta_k} q_{k-1}^{1 -\eta_k}  $ when the bandwith $h_k$ is small.
Notice also that in \Cref{algo:RIS}, at each step $k$, the bandwith $h_k$ is shared across all the generated points $(X_j)_{j=1}^{k+1}$. However, one could choose a different bandwith for each point, as this does not have an impact on our theoretical results (see \Cref{rk:bandwiths} in the Appendix).


\subsection{Uniform convergence of the scheme}\label{sec:results}

We now turn to the theoretical analysis of \Cref{algo:RIS}. Let us first introduce,  for all $n \geq 1$, and for any $x\in \mathbb R^d$,
\begin{align*}
	N_{n} (x) =\frac{1}{n}\sum_{k=1} ^{n} W_{k} ^{\eta_k} K_{h_n} (x  - X_{k}), \;
	D_{n} = \frac{1}{n}\sum_{k=1} ^{n} W_{k} ^{\eta_k}.
\end{align*}
Using this notation, the sequence $(f_n)_{n\geq 1}$ from \eqref{eq:kernelEstimate} can be rewritten under the form $f_{n} (x)= N_n(x)/D_n$, for all $n \geq 1, \, x\in \mathbb R^d$. Here, the analysis is carried out by examining separately $(D_n)_{n\geq 1}$ and $(N_n)_{n\geq 1}$. In particular the function $N_{n} $, which is an unnormalized smooth density estimate of $f$, can be decomposed as follows:
\begin{align}\label{eq:nn_moins_f}
N_n  -  f  = M_n + \left\{ \left(\frac{1}{n}\sum_{k=1} ^n (f^{\eta_k} q_{k-1}^{1-\eta_k} - f)\right)  \star K_{h_n} \right\}    
+ \big\{ f\star  K_{h_n}   -f \big\} \eqsp,
\end{align}
where the function $M_n$ is defined by, for all $x\in \mathbb R^d$,
 \begin{align}\label{eq:mn}
M_n(x) =\frac{1}{n}\sum_{k=1}^n \left\{ W_k^{\eta_k} K_{h_n} (x  - X_{k}) 
\right.
\left.
 -  \{f^{\eta_k} q_{k-1}^{1-\eta_k}\} \star K_{h_n} (x)\right\} .
 \end{align}
 The previous decomposition \eqref{eq:nn_moins_f} sheds some light on the approach taken in the mathematical proof. Indeed, the two last terms on the right hand side of this decomposition represent two different types of bias.  The first one with $f^{\eta_k} q_{k-1}^{1-\eta_k} - f$ captures the bias induced by the regularization (it is therefore directly related to the mirror descent update) and should be negligible when $\eta_n\to 1$ as $n\to \infty$. The second one with $f\star  K_{h_n} -f$ is well studied in nonparametric statistics \cite{tsybakov2008introduction} and goes to $0$ as soon as $h_n\to 0$ when $n\to\infty$. Finally, the first term $M_n$ in \eqref{eq:nn_moins_f}  is an average of martingale increments (see \Cref{prop:mg} in the Appendix), 
which
will help our theoretical analysis 
via Freedman-type concentration inequalities \cite{freedman1975tail,bercu2015concentration}.

Let us move on to our theoretical results and  introduce the following assumptions:
\begin{assumplist}[leftmargin=0.4in]
	\item  \label{cond:sequences} \begin{enumerate}[leftmargin=0.13in,label=(\roman*)]
				\item  The sequence $(\lambda_k)_{k\geq 1}$ is valued in $ (0,1]$, nonincreasing, and  $\lim_{k\to \infty}\lambda_k= 0$ and $\lim_{k\to \infty }\log(k) / (k\lambda_k)= 0$.
		\item The sequence $(h_k)_{k\geq 1}$ is valued in $\Rset^{+}$, nonincreasing, and  $\lim_{k\to \infty}h_k = 0$ and $\lim_{k\to \infty}\log(k) / (kh_k^d\lambda_k)= 0$.
				\item  The sequence $(\eta_k)_{k \geq 1}$ is valued in $ (0, 1]$, and $\lim_{k\to \infty} \eta_k=1$, $\lim_{k\to \infty}( 1-\eta_k ) \log(h_k)=0$
		and $\lim_{k\to \infty}(1-\eta_k) \log(\lambda_{k-1}) =0$.
	\end{enumerate}

	
	\item \label{cond:lowerbound} The density $q_0$ is bounded and there exists $c>0$ such that for all $x\in \mathbb R^d$, $q_0(x) \geq c f(x)$.
	
	\item \label{cond:reg_f} The function $f$ is nonnegative, $L$-Lipschitz and bounded by $U\in \Rset^{+}$.
	
	\item  \label{cond:kernel} $\int K = 1$, $\int \|u\| K(u) \rmd u < \infty$, $\int K^{1/2} <\infty$ and $\int \|u\| K(u)^{1/2} \rmd u < \infty$. The kernel $K$ is bounded by $K_\infty\ge 0$ and is $L_K$-Lipschitz with $L_K>0$, i.e. :
	\begin{align*}
		|K(x+ u ) -K(x)| \leq L_K \|u\| \quad \text{for all } x,u\in \Rset^d.
	\end{align*}

\end{assumplist}
In \ref{cond:sequences}, the reasonably slow convergence of $(\lambda_k)_{k\ge1}$ to 0 helps the proposal to visit a large part of the space, while the faster convergence of $(h_k)_{k\ge1}$ and $(\eta_k)_{k\ge1}$ to $0$ and $1$ respectively enables to remove the bias asymptotically. 
Then, \ref{cond:lowerbound} permits to control the variance induced by the weights whatever the proposal $q_k$. In practice, since $q_0$ is chosen by the user, this assumption is relatively weak. For instance,  \ref{cond:lowerbound} is valid when $f$ has exponential decay and $q_0$ is any member of the Student's $t$-distributions family (polynomial decay).  Finally, the regularity of $f$ and $K$ required in \ref{cond:reg_f}-\ref{cond:kernel} is used to ensure the convergence of $(f\star K_{h_k}) $ toward $f$.  \ref{cond:reg_f} is valid for many families of distributions (e.g., Student's, Gaussian, beta, exponential and any mixture of these densities) and  \ref{cond:kernel} is satisfied by most kernels 
 (Gaussian, beta, Epanechnikov,...), see \cite{tsybakov2008introduction}.

Under these assumptions, we obtain first the following uniform convergence result for the estimate \eqref{eq:kernelEstimate}, whose proof is given in \Cref{sec:proof_initial_bound}.


\begin{prop}\label{prop:initial_bound}
Assume \ref{cond:sequences}, \ref{cond:lowerbound}, \ref{cond:reg_f} and \ref{cond:kernel}. Then, for any $r > 0$, we have that almost-surely
\begin{align*}
\sup_{\|x\|\leq n^r } | f_n(x) - f(x) | \to0 \quad \text{as } n\to \infty.
\end{align*}
\end{prop}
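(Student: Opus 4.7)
The plan is to write $f_n = N_n/D_n$ and show separately that $D_n \to 1$ almost surely and that $N_n \to f$ uniformly on $\{\|x\|\le n^r\}$ almost surely. For the denominator $D_n = \frac{1}{n}\sum_{k=1}^n W_k^{\eta_k}$, I would apply a martingale decomposition analogous to \eqref{eq:nn_moins_f}: write $D_n - 1 = \frac{1}{n}\sum_k(W_k^{\eta_k} - \int f^{\eta_k} q_{k-1}^{1-\eta_k}) + \frac{1}{n}\sum_k(\int f^{\eta_k} q_{k-1}^{1-\eta_k} - 1)$. The first term is a martingale average that can be handled by a scalar Freedman-type inequality using \ref{cond:lowerbound} to get $W_k\le 1/(c\lambda_{k-1})$, so the increments are of order $(c\lambda_{k-1})^{-\eta_k}$. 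The second is a Cesaro-type bias term that vanishes because $\int f^{\eta_k}q_{k-1}^{1-\eta_k}\to 1$ under \ref{cond:sequences}(iii): by Hölder this quantity is $\le 1$, and a pointwise-plus-dominated-convergence argument using $(1-\eta_k)\log\lambda_{k-1}\to 0$ and $(1-\eta_k)\log h_k\to 0$ (controlling the upper bound on $q_{k-1}$ via the kernel) shows the limit is $1$.

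For the numerator $N_n$ I would use the decomposition \eqref{eq:nn_moins_f}. The kernel smoothing bias $f\star K_{h_n}-f$ is uniformly bounded by $Lh_n\int\|u\|K(u)\,du$ using \ref{cond:reg_f}--\ref{cond:kernel}, hence vanishes globally in $x$. The regularization bias $\frac{1}{n}\sum_k(f^{\eta_k}q_{k-1}^{1-\eta_k}-f)\star K_{h_n}$ is handled by estimating $|f^{\eta_k}q_{k-1}^{1-\eta_k}-f|= f\,|(q_{k-1}/f)^{1-\eta_k}-1|$ uniformly: using \ref{cond:lowerbound} gives the lower bound $q_{k-1}/f\ge c\lambda_{k-1}$ while $q_{k-1}\le \|q_0\|_\infty+K_\infty/h_{k-1}^d$ provides an upper bound on $f^{\eta_k}q_{k-1}^{1-\eta_k}$. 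Combined with the two limits on $(1-\eta_k)\log h_k$ and $(1-\eta_k)\log\lambda_{k-1}$ from \ref{cond:sequences}(iii) and the uniform continuity of $t\mapsto t^{\eta_k}$ on $[0,U]$, one gets $\|f^{\eta_k}q_{k-1}^{1-\eta_k}-f\|_\infty\to 0$; Cesaro averaging and convolution with $K_{h_n}$ (which has total mass one) preserves this uniform convergence in $x$.

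The hard part will be the martingale term $M_n(x)$ in \eqref{eq:mn}, for which I would invoke the martingale property that the paper flags (their Proposition on $M_n$) together with a Freedman-type concentration inequality. The almost sure boundedness of the increments is $|W_k^{\eta_k}K_{h_n}(x-X_k)|\le K_\infty h_n^{-d}(c\lambda_{k-1})^{-\eta_k}$ and the conditional variance is of the same order times the $L^\infty$ bound of $\{f^{\eta_k}q_{k-1}^{1-\eta_k}\}\star K_{h_n}^2$, which yields a pointwise deviation bound of order $\sqrt{\log(n)/(nh_n^d\lambda_n)}$ that tends to $0$ by \ref{cond:sequences}(i)-(ii). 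Upgrading this to a uniform bound over $\{\|x\|\le n^r\}$ is the main obstacle: I would cover the ball by an $\varepsilon_n$-net of cardinality polynomial in $n$, control the oscillation of $M_n$ between grid points via the $L_K$-Lipschitz property of $K_{h_n}$ (whose Lipschitz constant is $L_K/h_n^{d+1}$), and take $\varepsilon_n$ sufficiently small (e.g.\ an inverse power of $n$) so that the Lipschitz error is negligible. A union bound over the net, combined with Borel--Cantelli on the tail of the Freedman bound, gives the uniform almost sure convergence.

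Combining: $N_n(x)\to f(x)$ uniformly on $\{\|x\|\le n^r\}$ a.s., $D_n\to 1$ a.s., and the ratio $f_n = N_n/D_n$ converges uniformly on the same set, proving the proposition.
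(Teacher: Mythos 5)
Your proposal is correct and follows essentially the same route as the paper: the split $f_n=N_n/D_n$, the martingale-plus-bias decomposition for each piece, the sandwich bounds $c\lambda_{k-1} f\le q_{k-1}\le \|q_0\|_\infty+K_\infty/h_{k-1}^d$ combined with \ref{cond:sequences}(iii) to kill the regularization bias, and a uniform Freedman inequality over an $\varepsilon_n$-net with Borel--Cantelli for $M_n$ are exactly the ingredients of the paper's argument (its Propositions on $Z_k^{(1)}$ and $Z_k^{(2)}$). The only detail you leave implicit is that the second-moment bound on the increments requires $2\eta_k-1>0$, which is handled by discarding the finitely many indices with $\eta_k\le 1/2$.
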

The previous result is now extended to a result over the whole space. If we strenghten our assumptions on $q_0$ and $K$ as follows, we can obtain a stronger uniform convergence result.

\begin{assumplist}[leftmargin=0.4in]
\item \label{cond:tail_f} There exist $(c, \delta) \in (\Rset^{+})^2$, such that for all $x\in \mathbb R^d$,
$c (1+\|x\|^\delta ) f(x)  \leq  q_0(x) .$
\item \label{cond:tail}
There exist $(C_K,r_K)\in  (\Rset^{+})^2$, such that for all $x\in \mathbb R^d$,
$K(x)  \leq C_K (1+\|x\|) ^{-r_K}$.
\end{assumplist}

Assumptions \ref{cond:tail_f} and \ref{cond:tail} are not too restrictive, as the examples of $f$ and $q_0$ as well as the kernel functions given before are still valid under this new framework. Because \ref{cond:tail_f} is stronger than \ref{cond:lowerbound}, the latter will no longer be needed in our second result, whose proof is given in \Cref{sec:proof_initial_bound2}.
\begin{prop}\label{prop:initial_bound2}
Assume \ref{cond:sequences}, \ref{cond:reg_f}, \ref{cond:kernel}, \ref{cond:tail_f} and \ref{cond:tail}. Then, we have that almost-surely
\begin{align*}
\sup_{x \in \mathbb R^d } | f_n(x) - f(x) | \to 0 \quad \text{as } n\to \infty.
\end{align*}
\end{prop}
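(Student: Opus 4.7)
The plan is to reduce to Proposition~\ref{prop:initial_bound} on a growing ball and then add explicit tail control outside. Fix $R_n = n^r$ for some $r>0$ to be chosen, and decompose
\begin{align*}
\sup_{x \in \Rset^d} |f_n(x) - f(x)| \;\leq\; \sup_{\|x\| \leq R_n} |f_n(x) - f(x)| + \sup_{\|x\| > R_n} f(x) + \sup_{\|x\| > R_n} f_n(x).
\end{align*}
The first supremum vanishes almost surely by Proposition~\ref{prop:initial_bound}, since \ref{cond:tail_f} implies the qualitative dominance $q_0 \geq c f$ actually used in that proof, so the machinery applies verbatim. The second term is deterministic: $f$ is bounded, Lipschitz, and integrates to one, and the Lipschitz property forbids persistent bumps of positive amplitude at infinity (otherwise $\int f = \infty$), so $f(x) \to 0$ as $\|x\| \to \infty$ and $\sup_{\|x\| > R_n} f(x) \to 0$ whenever $R_n \to \infty$.

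The core step is to show $\sup_{\|x\| > R_n} f_n(x) \to 0$ almost surely. Writing $f_n = N_n/D_n$ and using that $D_n$ converges almost surely (an output of the proof of Proposition~\ref{prop:initial_bound}), it is enough to control the numerator $N_n(x) = n^{-1}\sum_{k=1}^n W_k^{\eta_k} K_{h_n}(x - X_k)$. For $\|x\| > R_n$, I would split $\{1,\dots,n\}$ into the \emph{close} indices $\{\|X_k\| \leq R_n/2\}$ and the \emph{far} indices $\{\|X_k\| > R_n/2\}$. On the close set, $\|x - X_k\| \geq R_n/2$, so by \ref{cond:tail} one has $K_{h_n}(x - X_k) \leq C_K h_n^{r_K - d}(R_n/2)^{-r_K}$; since $n^{-1}\sum_k W_k^{\eta_k} = D_n$ is a.s. bounded, the close contribution is $\mathcal{O}(h_n^{r_K - d} R_n^{-r_K})$. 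On the far set, \ref{cond:tail_f} gives $q_{k-1}(X_k) \geq c \lambda_{k-1}(1+\|X_k\|^\delta) f(X_k)$, hence
\begin{align*}
W_k^{\eta_k} \leq (c\lambda_{k-1})^{-\eta_k}(1+\|X_k\|^\delta)^{-\eta_k} \lesssim \lambda_n^{-1}(R_n/2)^{-\delta \eta_k};
\end{align*}
bounding the kernel by $K_\infty/h_n^d$ and using $\eta_k \to 1$, the far contribution is $\mathcal{O}(R_n^{-\delta \eta_\star}/(\lambda_n h_n^d))$ for some $\eta_\star \in (0,1)$ arbitrarily close to $1$. Under \ref{cond:sequences}, $(\lambda_n h_n^d)^{-1} \lesssim n/\log n$, so choosing $r > \max(1/r_K, 2/\delta)$ drives both contributions to zero.

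The main obstacle is the joint use of the two new tail assumptions: \ref{cond:tail_f} is needed precisely where the kernel cannot help (far particles $X_k$, for which the crude bound $K_\infty/h_n^d$ would be useless without an extra gain in the weights), while \ref{cond:tail} kills the kernel at the typical distance $R_n$ from a tail point $x$. The delicate part is reconciling the speed $R_n$ with the simultaneous decay of $\lambda_n$ and $h_n$ imposed by \ref{cond:sequences}; the polynomial margins in \ref{cond:tail_f} and \ref{cond:tail} give exactly enough room for any admissible choice of sequences.
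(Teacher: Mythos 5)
Your proof is correct and follows the same global architecture as the paper's: restrict to a ball of radius $n^{r}$ where \Cref{prop:initial_bound} applies, then show that both $f$ and $f_n$ vanish uniformly outside, splitting the particles $X_k$ into those inside the half-radius ball (where \ref{cond:tail} kills the kernel) and those outside (where \ref{cond:tail_f} kills the weight). The one genuine divergence is in the far-particle term. The paper centers $\sum_{k}W_k^{\eta_k}\mathbb I_{\{\|X_k\|>A\}}$ at its conditional means, bounds those means via Markov's inequality and $\int\|x\|^{\delta}f<\infty$, and controls the fluctuation with Freedman's inequality (\Cref{th:freedman}) plus Borel--Cantelli. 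You instead apply the pointwise bound $W_k\le (c\lambda_{k-1})^{-1}(1+\|X_k\|^{\delta})^{-1}$ from \ref{cond:tail_f} termwise, which makes the far contribution deterministically $O(\lambda_n^{-1}h_n^{-d}R_n^{-\delta\eta_\star})$ with no concentration argument at all. This is a legitimate simplification: your bound is lossier than the paper's by a factor $\lambda_n^{-1}$ (and, through $\eta_\star$, by the worst exponent among the early iterates), but since $\lambda_n^{-1}h_n^{-d}$ grows at most polynomially under \ref{cond:sequences} and the radius exponent $r$ is a free parameter, the loss is absorbed by taking $r$ large. Two small points to tighten: (a) "$\eta_\star$ arbitrarily close to $1$" is not quite right for the finitely many initial indices with possibly small $\eta_k$; what you actually need is $\eta_\star=\inf_k\eta_k>0$, which holds since each $\eta_k>0$ and $\eta_k\to1$ under \ref{cond:sequences}(iii), so the condition becomes $r>1/(\delta\eta_\star)$ rather than $r>2/\delta$ (or handle the finitely many early terms separately, as they contribute $O(n^{-1}h_n^{-d})=O(1/\log n)$). (b) Your argument for $\sup_{\|x\|>R_n}f(x)\to0$ via Lipschitzness and integrability is valid but differs from the paper, which reads it off \ref{cond:tail_f} together with the boundedness of $q_0$; either works. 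Your closing observation about the complementary roles of \ref{cond:tail_f} and \ref{cond:tail} is exactly the mechanism of the paper's proof.
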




%
%

\begin{rem}\label{rk:batch}
	With some additional notation, \Cref{algo:RIS} could be extended to a mini-batch version where a fixed number of particles $m_k\ge 1$ is generated at each iteration instead of $m_k=1$. Such a choice does not impact our proof since the martingale property we use in the proofs of convergence remains valid. This encouraged us to use such a batch of particles to construct the sequence $(\eta_k)_{k\ge 1}$ in an \textit{adaptive} manner, as explained in the following
	\Cref{sec:ada}.
\end{rem} 

	\subsection{Adaptive choice of the regularization parameter}\label{sec:ada}

We have explicited in \Cref{sec:results} the conditions on the hyperparameters of \Cref{algo:RIS}, so that the latter consistently estimates integrals of interest with respect to the target $f$. In particular, it requires that the sequence of regularization parameters $(\eta_k)_{k\ge 1}$ converges to 1. Here, we propose a practical and adaptive way to construct such a sequence. It relies on the following idea: when the proposal $q_k$ is equal to $f$, the importance weights are uniformly distributed on the points sampled in \Cref{algo:RIS}. The quality of approximation of the proposal w.r.t. to the target can thus be reformulated as quantifying how far is the distribution of the importance weights from the uniform distribution.
Intuitively, penalizing the divergence between the distribution reweighted by the importance weight, with respect to the uniform distribution, penalizes a high variance for the importance weights.
We choose to consider Renyi's $\alpha$-divergence \cite{renyi1961measures}, as it will enable us to influence how fast the regularization evolves.


At each time $k$, we draw a batch of $m_k$ i.i.d samples $X_{k,1}, \ldots, X_{k, m_k}$ generated from $q_{k-1}$, and compute their associated normalised IS weights $W_{k, 1}, \ldots, W_{k,m_k}$, i.e. $W_{k,\ell}  \propto f(X_{k, \ell}) / q_{k - 1} (X_{k, \ell}) $ for all $\ell = 1 \ldots m_k$ such that $\sum_{\ell =1}^{m_k} W_{k,\ell} = 1$. Notice they are not regularized as in \eqref{eq:normalised_weights}. Now, let 
$\PP = \sum_{l=1}^{m_k}W_{k,l}\delta_{X_{k,l}}$
 and 
 $\PQ= \sum_{l=1}^{m_k} \nicefrac{1}{m_k} \delta_{X_{k,l}}$ the reweighted and uniform distribution on the $(X_{k,l})_{l=1}^{m_k}$ respectively. Renyi's $\alpha$-divergence \cite{renyi1961measures} of $\PP$ from $\PQ$
is defined for $\alpha \in \rset \setminus \lrcb{0,1}$ by
$$
D_\alpha (\PP|| \PQ) = \frac{1}{\alpha -1} \log \lr{ \sum_{\ell = 1}^{m_k} W_{k,\ell}^{\alpha} m_k^{\alpha-1}}.$$
It can be extended by continuity to the forward Kullback-Leibler by letting $\alpha \to 1$ (we use the notation $D_1 (\PP || \PQ)$) and to the reverse Kullback-Leibler by letting $\alpha \to 0$ (we use the notation $D_0 (\PP || \PQ)$). Notably, for all $\alpha \in [0,1]$ (see \cite[Theorem 3]{2012arXiv1206.2459V}),  it holds that
$0 \leq D_0 (\PP || \PQ) \leq D_\alpha (\PP || \PQ) \leq D_1 (\PP || \PQ).
$
At time $k$, for  $\alpha \in \rset \setminus \lrcb{0,1}$, we propose to set the regularisation parameter as
\begin{equation}\label{eq:ada}
\eta_{k, \alpha} \eqdef 1 - \frac{D_\alpha (\PP || \PQ)}{\log(m_k)} \eqsp.
\end{equation}
Choosing the regularization parameter as \eqref{eq:ada} is particularly convenient, since it does not require to use a pre-defined sequence $(\eta_k)_{k\ge 1}$ as written in \Cref{algo:RIS}. In contrast, the regularization is chosen adaptively at each iteration, depending on how far is the current estimate to $f$. 
The following proposition, whose proof can be found in \Cref{sec:proof_tuning_eta} guarantees that our proposal \eqref{eq:ada} is a valid candidate.

\begin{prop}\label{prop:tuning_eta} Let $\alpha \in [0,1]$ and let $(\eta_{k,\alpha})_{k\geq 1}$ be the sequence defined by \eqref{eq:ada} for all $k \geq 1$. Then, we have:
	\vspace*{-0.2cm}
	\begin{enumerate}[label = (\roman*)]
		\item The sequence $(\eta_{k,\alpha})_{k\ge 1}$  is valued in $[0,1]$, with $\eta_{k,\alpha}=1$ iff $\PP=\PQ$; \label{item:ada1}
		\item $
		0 \leq \eta_{k,1} \leq \eta_{k, \alpha} \leq 1$; \label{item: ada2}
		\item Further assume that $(q_k)_{k \geq 1}$ is a sequence of probability density functions s.t. $\lim_{k\to \infty} | q_k(x)  - f(x)| = 0 $ almost everywhere and that $\lim_{k\to \infty} m_k=m$.  Then, $\lim_{k\to \infty}   \eta_{k,\alpha}  = 1$ in $L_1$ .\label{item:ada3}
	\end{enumerate}
\end{prop}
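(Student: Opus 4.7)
The plan is to treat the three items in order, with most of the effort going into part (iii).

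For (i), the upper bound $\eta_{k,\alpha}\le 1$ is equivalent to $D_\alpha(\PP\|\PQ)\ge 0$, a standard property of Rényi divergences, with equality iff $\PP=\PQ$. For the lower bound $\eta_{k,\alpha}\ge 0$, I would combine the ordering $D_\alpha\le D_1$ recalled in the excerpt with the direct computation
\[
D_1(\PP\|\PQ) = \sum_\ell W_{k,\ell}\log(m_k W_{k,\ell}) = \log(m_k) + \sum_\ell W_{k,\ell}\log W_{k,\ell} \le \log(m_k),
\]
where the inequality uses $W_{k,\ell}\in[0,1]$ so that $\sum_\ell W_{k,\ell}\log W_{k,\ell}\le 0$.

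Part (ii) is then immediate: the chain $0\le D_\alpha\le D_1$ from the excerpt, once divided by $\log(m_k)>0$ and subtracted from $1$, yields $0\le\eta_{k,1}\le\eta_{k,\alpha}\le 1$.

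Part (iii) is the substantial step. Since $\eta_{k,\alpha}\in[0,1]$ by (i), bounded convergence reduces $L^1$ convergence to convergence in probability; and since $m_k\to m$ with $m\ge 2$ (so $\log(m_k)$ is eventually bounded below by a positive constant), it suffices to show $D_\alpha(\PP\|\PQ)\to 0$ in probability. By (ii) it is enough to target $D_1$. Writing the unnormalised weights $R_{k,\ell}=f(X_{k,\ell})/q_{k-1}(X_{k,\ell})$, one has $W_{k,\ell}=R_{k,\ell}/\bar R_k$ with $\bar R_k=m_k^{-1}\sum_j R_{k,j}$, and the key identity
\[
\mathbb{E}\left|R_{k,\ell}-1\right| = \int \left|\frac{f(x)}{q_{k-1}(x)}-1\right| q_{k-1}(x)\,\rmd x = \int |f-q_{k-1}|
\]
tends to $0$ by Scheffé's theorem (from $q_{k-1}\to f$ a.e.\ together with $\int q_{k-1}=\int f=1$). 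Hence $R_{k,\ell}\to 1$ in $L^1$, and averaging gives $\bar R_k\to 1$ in probability. Continuous mapping then yields $m_k W_{k,\ell}=R_{k,\ell}/\bar R_k\to 1$ in probability, so by continuity of $u\mapsto u\log u$ and since $m_k\to m<\infty$,
\[
D_1(\PP\|\PQ) = \frac{1}{m_k}\sum_{\ell=1}^{m_k}(m_k W_{k,\ell})\log(m_k W_{k,\ell}) \longrightarrow 0
\]
in probability, as a finite sum, giving the desired convergence.

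The main obstacle is in (iii): the clean reduction of $\mathbb{E}|R_{k,\ell}-1|$ to the total variation $\int|f-q_{k-1}|$ is what makes Scheffé's theorem applicable; the rest (continuous mapping through the ratio, continuity of $u\log u$, and bounded/dominated convergence) is routine bookkeeping. The boundary cases $\alpha\in\{0,1\}$ are absorbed into the same argument via the envelope $0\le D_\alpha\le D_1$.
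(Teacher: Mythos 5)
Your proposal is correct and follows essentially the same route as the paper: the bound $D_1(\PP\|\PQ)\le\log(m_k)$ via $W\log W\le 0$, the monotonicity $0\le D_\alpha\le D_1$ for (i)–(ii), and for (iii) the reduction to the unnormalised weights converging to $1$ through $\mathbb{E}|R_{k,\ell}-1|=\int|f-q_{k-1}|\to 0$ by Scheffé's lemma, followed by continuity of the normalisation and entropy maps. Your explicit passage through convergence in probability plus bounded convergence is a slightly more careful bookkeeping of the final step than the paper's appeal to continuity of the composed maps, but it is the same argument.
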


By \Cref{prop:tuning_eta}(ii), one can possibly increase the value $\eta_k$ by decreasing $\alpha$.\ak{example of f satisfying assump of Prop 5- (iii)?}
The procedure resulting from the previous description can be written as follows.

\begin{algorithm}
	\caption{\em Renyi's Adaptive Regularization (RAR)} \label{algo:ADA}
	\textbf{Input}: iteration $k$, number of samples $m$, $\alpha \in [0,1]$. \\
	\smallskip
	For $l= 0,1,\ldots, m $:
	\begin{enumerate}[leftmargin=0.4in]
		\item Generate $X_{k,1},\dots, X_{k,m}$ from $q_{k-1}$ (from \Cref{algo:RIS}).
		\item Compute the normalised importance weights $W_{k,l} \propto f(X_{k,l} ) / q_{k-1}(X_{k,l})$ such that $\sum_{l=1}^{m} W_{k,l} = 1$.
		\item Return  $\eta_{k,\alpha} = 1 - D_\alpha (\PP || \PQ)/\log(m)$, where $\PP = \sum_{l=1}^{m}W_{k,l}\delta_{X_{k,l}}$
		and 
		$\PQ= \sum_{l=1}^{m} \nicefrac{1}{m} \delta_{X_{k,l}}$.
	\end{enumerate}
\end{algorithm}
	

	\section{Related work}
\textbf{Particle Mirror Descent \cite{dai2016provable}}. 
Closely related to our scheme \Cref{sec:rais} is the one
proposed in \cite{dai2016provable}, that also approximates entropic mirror descent. 
Their proposal is to approximate the (intractable) $\sq_{k+1}$ of \eqref{eq:emd} by a weighted kernel density estimator centered at $m_k$ particles sampled from $q_k$ the previous approximate of $\sq_k$, and update the weights as \eqref{eq:normalised_weights}.
Our algorithm is different in nature from the one proposed in \cite{dai2016provable}, which require that both the number of iterations $k$ and number of particles at each iteration $m_k$ should grow to infinity. First, we only need one new particle at each iteration, i.e. $m_k=1$ (but the theoretical guarantees hold even if $m_k\ge 1$, see \Cref{rk:batch}), which means that our approach requires a single request to $f$ at each iteration; and we use all particles generated at times $k'\le k$. This is a major improvement making our approach in line with the spirit of \textit{stochastic approximation}  \cite{kushner2003stochastic} where each iteration should be of small cost. 
Second, our proposal is a mixture of $f_k$, that approximates \eqref{eq:emd}, and a safe density $q_0$ which allows in practice to visit exhaustively the underlying domain.
Finally, our theoretical results also differ from the ones of \cite[Theorem 6]{dai2016provable}. In our setting, the sequence $(\eta_k)_{k\ge1}$ is not intended to go to $0$ as in \cite{dai2016provable}, but to $1$ in order to realize the bias-variance trade-off described in Section \ref{sec:moti}. 
Also in contrast with the results of \cite[Theorem 6]{dai2016provable}, our results of \Cref{prop:initial_bound} and \ref{prop:initial_bound2} are free from any restriction on the behavior of the sequence $(q_k)_{k\ge 1}$, as we only impose conditions on the safe density \ref{cond:lowerbound}  and  \ref{cond:tail_f}. Their results assume that $q_k$ is bounded away from $0$, 
which is unlikely to be satisfied in practice.

\textbf{Safe Adaptive Importance Sampling \cite{delyon2021safe}.} 
 The Safe Adaptive Importance Sampling algorithm of \cite{delyon2021safe} corresponds to the particular case where the sequence $(\eta_k)_{k\ge1} $ is constant and equal to 1 in \Cref{algo:RIS}. In  \cite{delyon2021safe}, the authors obtain uniform convergence results  when the sequence $(\lambda_k)_{k\ge0}$ satisfies \ref{cond:sequences}(i). Our algorithm extends SAIS that we recover as a particular case, hence our theoretical results in \Cref{sec:results} improves and extends convergence results of \cite{delyon2021safe}.  Our adaptive choice for the regularization \Cref{algo:ADA} is also new.  Moreover, our approach outperforms SAIS ($\eta_k=1$) numerically, as will be illustrated in \Cref{sec:numerical}. 

\textbf{Regularization of IS weights.}	Non linear transformation of the weights have been promoted in the literature of IS previously. 
For instance, truncated IS (or non linear IS) \cite{ionides2008truncated} \cite{koblents2015population}, clips the weights $W$ as $W'=\min(W, \tau)$ where $\tau$ is some threshold. In \cite{gramacy2010importance}, it is proposed to elevate IS weights to some regularization parameter lambda, similarly to our work. This power lambda is set maximizing the Effective Sample Size (see eq. 6 and Prop 2.1 therein) instead of Renyi’s alpha divergence between the distribution of the importance weights and the uniform. Also these methods are proposed in the naive (i.e., non adaptive) importance sampling framework. 
In \cite{finke2019importance} (see appendix A.3) is proposed a similar strategy to \cite{gramacy2010importance} but in the AIS setting.  It differs at least in two ways from our approach. First, the theoretical guarantees of such a regularization schedule are not investigated in their paper while we establish the convergence of $\eta$ to $1$ in \Cref{prop:tuning_eta} so that our requirements for uniform convergence are satisfied. Second, in the previous work, the regularization parameter $\eta$ is chosen again to maximize the ESS, by running, at each iteration, a bisection algorithm.  It appears more expensive than the evaluation of the closed form given in \Cref{algo:RIS} which is only of complexity $O(m)$ where $m$ is the size of the batch.

\textbf{Sequential Monte Carlo (SMC).}
The algorithm proposed in this paper is formally not part of the SMC framework but still some interesting links might be drawn. SMC are often viewed as performing adaptive importance sampling whilst also allowing for the target distribution to change over iterations. The targets sequence might be chosen as the entropic mirror descent iteration as recommended for instance in \cite{del2006sequential}, eq. 7. The resulting SMC algorithm might benefit from the same advantage (small variance) as our approach.
However, the particles in traditional SMC \cite{chopin2004central,del2006sequential} are moved in a completely different manners as in the proposed AIS algorithm. When running an SMC algorithm, each particle is necessarily moved around its ``parent'' (a random walk move called mutation) and then, among the new particles obtained, some are removed based on their weights values (a subsampling step called selection). 
The presence of this selection mechanism in SMC suggests some similarities with the popular random walk Metropolis in which a particle not profitable would be removed with high probability. Our algorithm, whose particles distribution is only $(q_n)_{n\geq 1} $, is simpler due to the absence of selection. This explains why we obtain different results based on different proofs techniques than the ones used in the  SMC literature. While SMC theoretical guarantees have been expressed as central limit theorems \cite{chopin2004central}, 
we have obtained almost sure convergence of the sequence of proposals.

	\section{Numerical Experiments}\label{sec:numerical}

In this section, we discuss the performance of \Cref{algo:RIS} along with the subroutine \Cref{algo:ADA} to approximate integrals of interest w.r.t. a target distribution $f$, on toy  and real-world examples. Since renormalized weights are used in \Cref{algo:RIS}-\ref{algo:ADA}, these experiments only require to know the target distribution up to its normalization constant. A gaussian kernel is used in all experiments.

\subsection{Toy examples}

\paragraph{Gaussian mixtures.}   Let us now describe three interesting examples starting by the simplest and finishing with the most difficult case. Denote by $\phi_\Sigma $ the multivariate Gaussian density with mean zero and variance $\Sigma$. Denote also $\boldsymbol{I}_d$ the identity matrix and $\boldsymbol{u}_d$ the $d$-dimensional vector whose coordinates are all equal to $1$.
The \textit{Cold Start} case is when the target density is a Gaussian given by
$f_1(x) =  \phi_{\Sigma} (x - 5 \boldsymbol{u}_d / \sqrt d )$ , 
 $x\in \mathbb R^d$;
where $\Sigma = (.4 / \sqrt d )^2 \boldsymbol{I}_d  $  and the starting density is the multivariate Student distribution with mean zero and variance $(5/d) \boldsymbol{I}_d $. Note the division by $\sqrt d$ which preserves the Euclidean distance between the two centers of the starting distribution and the target, whatever the dimension. The same explanation justifies the normalization of the variances by $1/d$. The \textit{Gaussian mixture} example corresponds to the target density
$f_2(x) = .5 \phi_\Sigma (x - \boldsymbol{u}_d / (2\sqrt d) ) + .5 \phi_\Sigma (x + \boldsymbol{u}_d / (2\sqrt d)) $, 
$x\in \mathbb R^d$.
The initial density is the multivariate student distribution with mean $(1,-1, 0,\ldots, 0) / \sqrt d  $ and variance $(5/d) \boldsymbol{I}_d $. The initial mean value differs from zero to prevent the naive algorithm using the initial density from having good results (due to the symmetry). The \textit{Anisotropic Gaussian Mixture} case is similar to the previous example, except that it is unbalanced and that each Gaussian in the mixture is anisotropic. The target is given by
$f_3(x) = .25 \phi_V (x - \boldsymbol{u}_d / (2\sqrt d) ) + .75 \phi_V(x + \boldsymbol{u}_d / (2\sqrt d))$, 
$x\in \mathbb R^d$;
with $V = (.4 / \sqrt d )^2  \mathrm{diag} (10,1,\ldots, 1) $.  The starting density is the multivariate student distribution with mean $(1,-1, 0,\ldots, 0) / \sqrt d  $ and variance $(5/d) \boldsymbol{I}_d $.

\paragraph{Competitive methods.}
We focus first on four constant values for $\eta$ for \Cref{algo:RIS}, respectively $1, .75, .5, .25$. While $\eta = 1$ is an unbiased estimate with large variance, $\eta= .25$ is biased but has smaller variance.  We also run \Cref{algo:RIS} when $(\eta_k)_{k\geq 1}$ is tuned via \Cref{algo:ADA} (RAR),  with parameter $\alpha = .5$.
All algorithms are initialised by sampling independently $4.10^4$ particles from the initial density. We further sample independently $N = 18.10^3$ particles from $q_k$ at each time $k \geq 1$, for a total computational budget of $4.10^5$ particles (i.e $n = 20$ iterations). We follow the recommendation given in \cite{delyon2021safe} which consists in running a subsampling procedure of size $\ell_k = \sqrt {N_k}$ when $N_k $ denotes the total amount of particles at step $k$. These particles are used to construct the kernel estimate $f_k$, improving significantly the computing time. We take $\lambda_k \propto \ell_k ^{-2 / (4+d)}$ and $h_k \propto \ell_k ^{-1 / (4+d)}$ as recommended in our theoretical study.\ak{pas clair par rapport à avant qu'on a recommandé ça... ou alors pour l = 1 ok on dit " to satisfy \ref{cond:sequences}(i)-(ii)"}

\paragraph{Error evaluation.}
For each method in competition, the evaluation of the mean squared error (MSE) is made by computing the average of $\| \mu_f- \hat \mu\|_2^2$ over $50$ runs of the method where $\mu_f$ is the mean of the target and $\hat \mu$ stands for the estimated mean. Note that the examples where chosen in a way that the error computing the mean reflects the general behavior of the method.\ak{i don't understand this sentence} In each case, we plot the error at each iteration of the algorithm. This is presented in  \Cref{fig:toy_error,fig:toy_bp} in dimension $d = 16$ (similar results are have been obtained for $d=4$ and $d=8$ but are deferred \Cref{sec:add_expes}).


\begin{figure}[h]
\centering
		\includegraphics[scale=0.27]{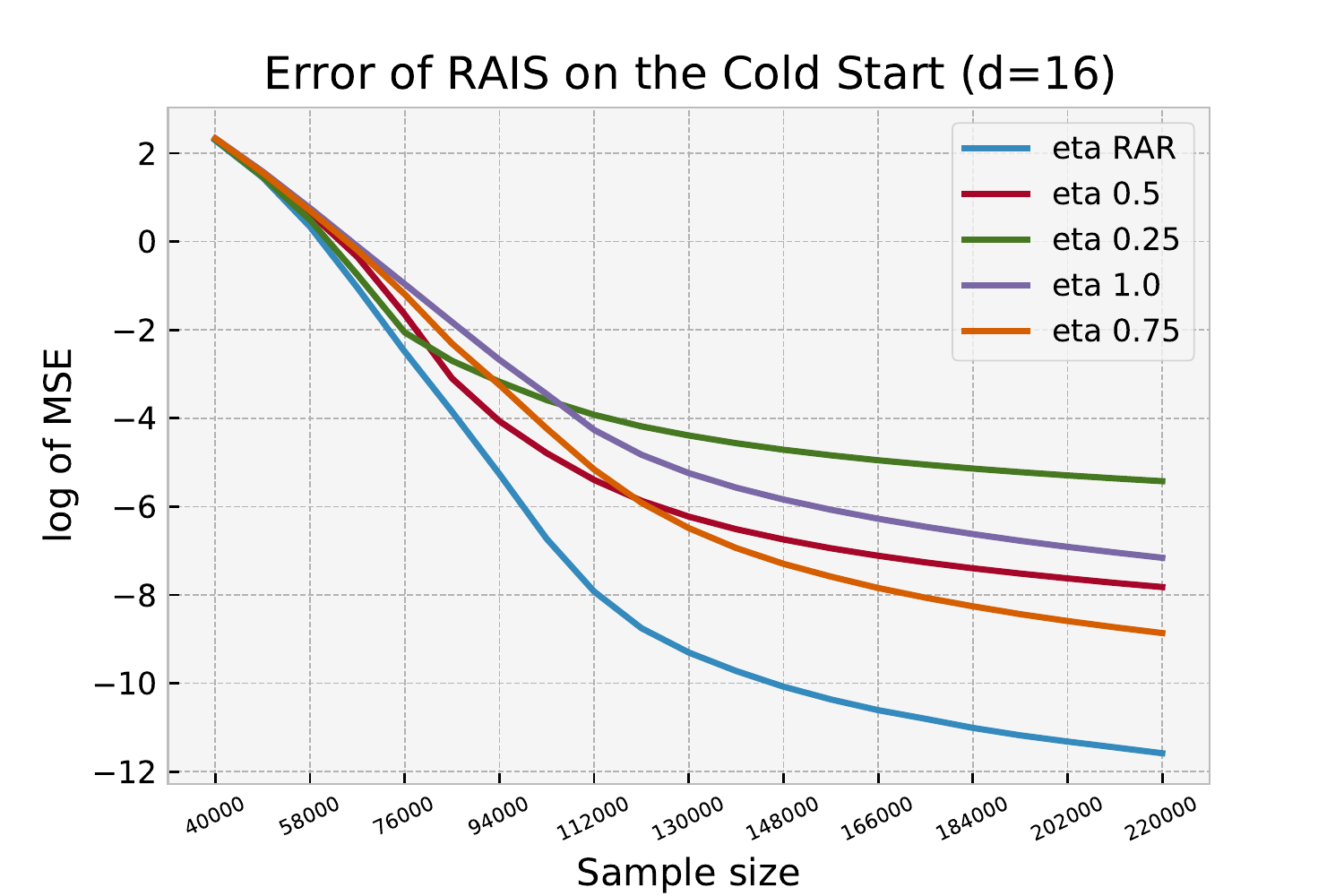}\includegraphics[scale=0.27]{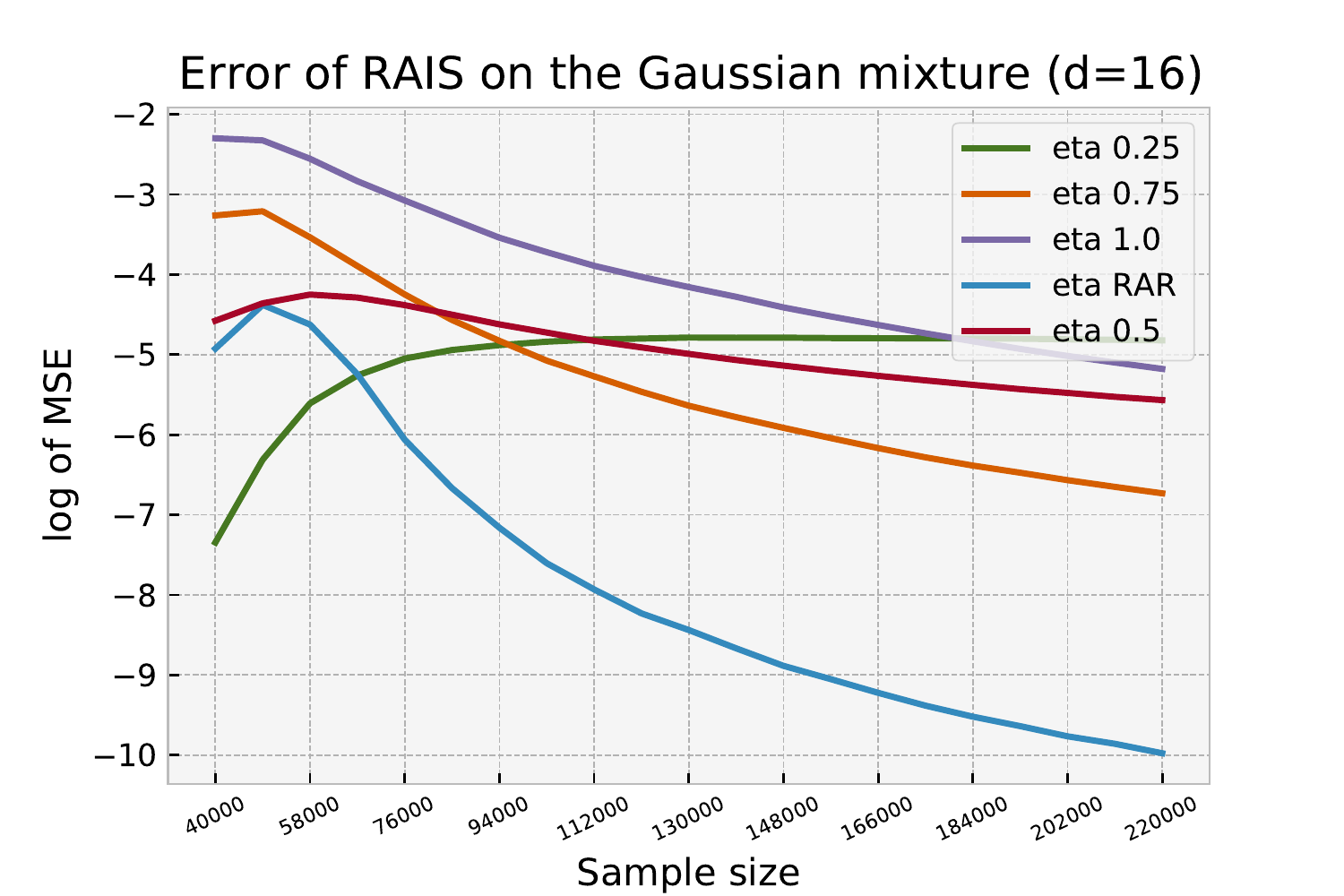}\includegraphics[scale=0.27]{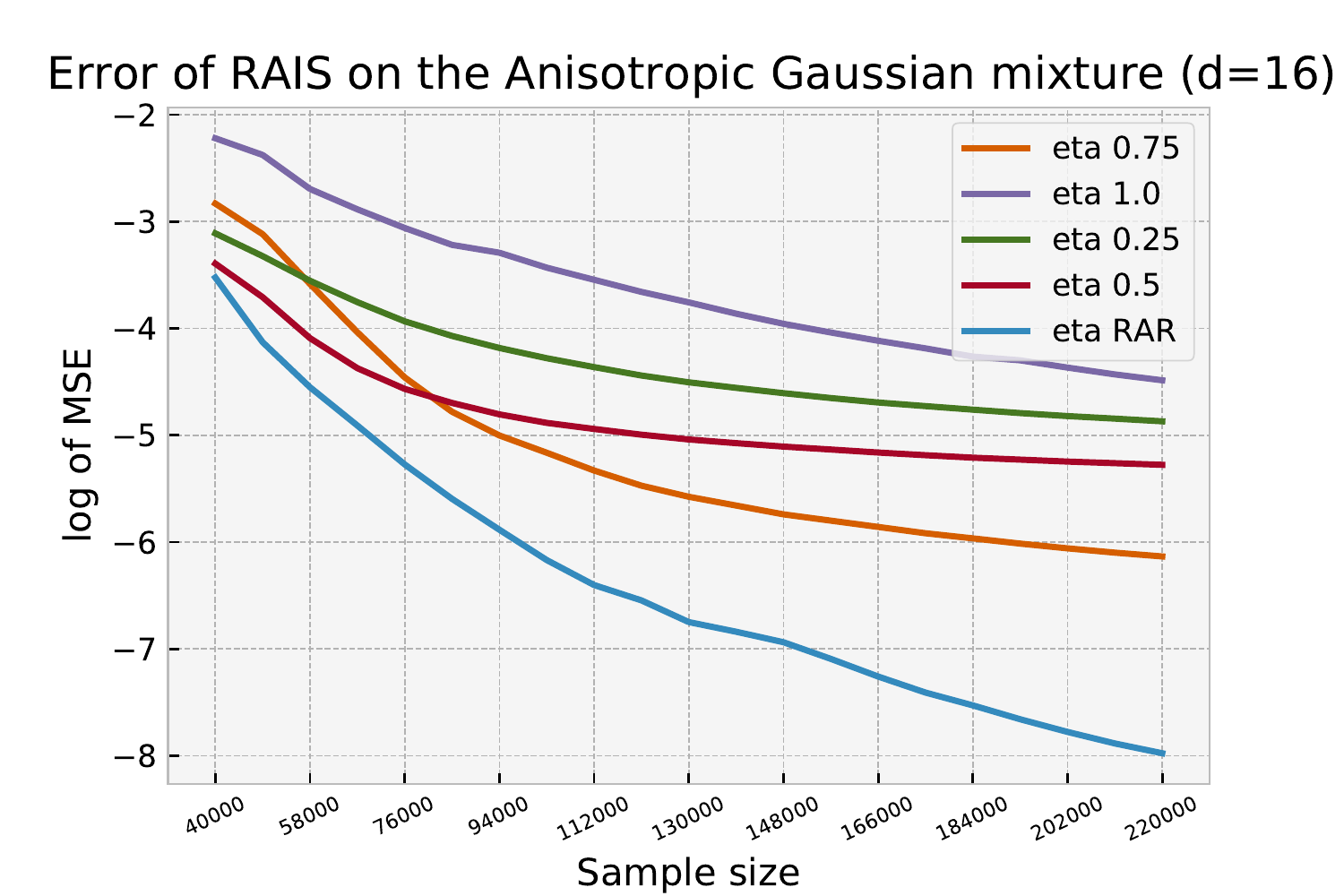} \\
  \caption{Logarithm of the average squared error for \Cref{algo:RIS} for constant values of $\eta$ or \Cref{algo:ADA}, computed over 50 replicates. 
  }\label{fig:toy_error}
\end{figure}

Figure \ref{fig:toy_error} shows that the sequence $(\eta_{k})_{k\geq 1}$ has a strong influence on the outcome of the procedure and an adaptive choice of $(\eta_k)_{k\geq 1}$ (reflecting the quality of the current proposal) leads to a substantial improvement. An explanation of the performance of \Cref{algo:RIS} along with \Cref{algo:ADA} can be found in Figure \ref{fig:toy_bp} where we see that in all cases, at the beginning of the algorithm when the policy is poor, the value of $\eta_k$  is automatically set to a small value (leading to a uniformization of the weights); and when the policy becomes better the value of $\eta_k$ converges to $1$.


\begin{figure}[h]
	\centering
		\includegraphics[scale=0.27]{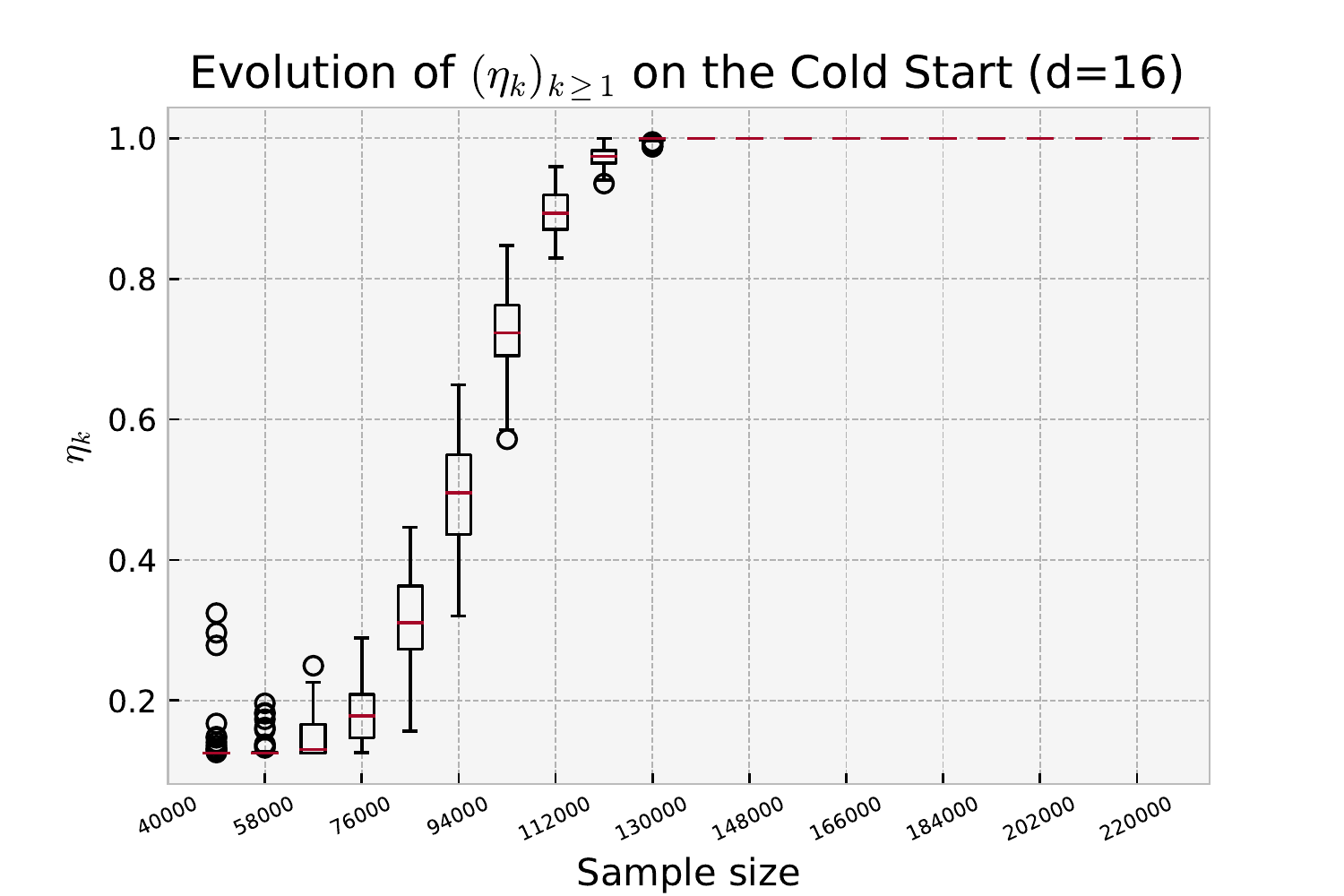}\includegraphics[scale=0.27]{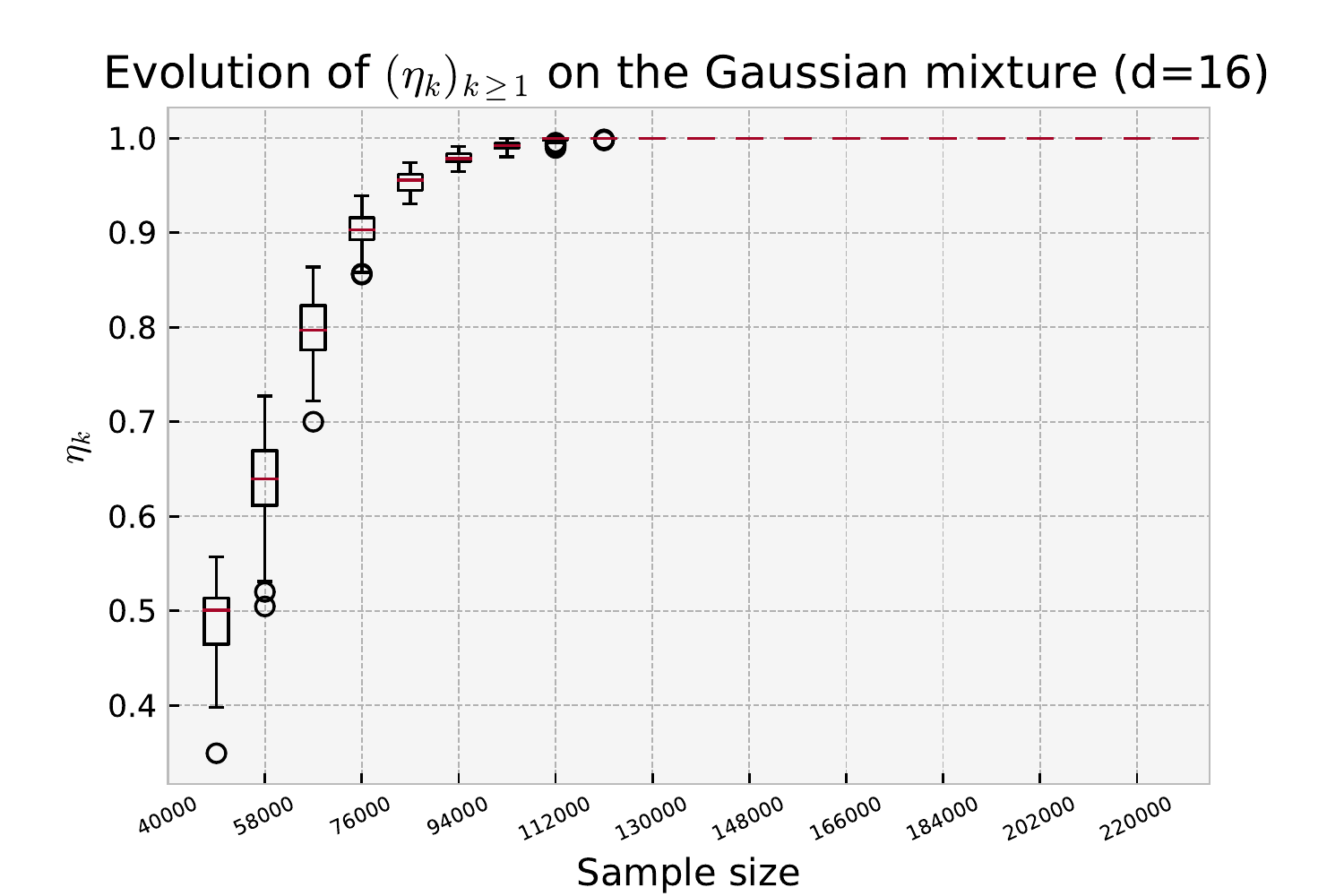}\includegraphics[scale=0.27]{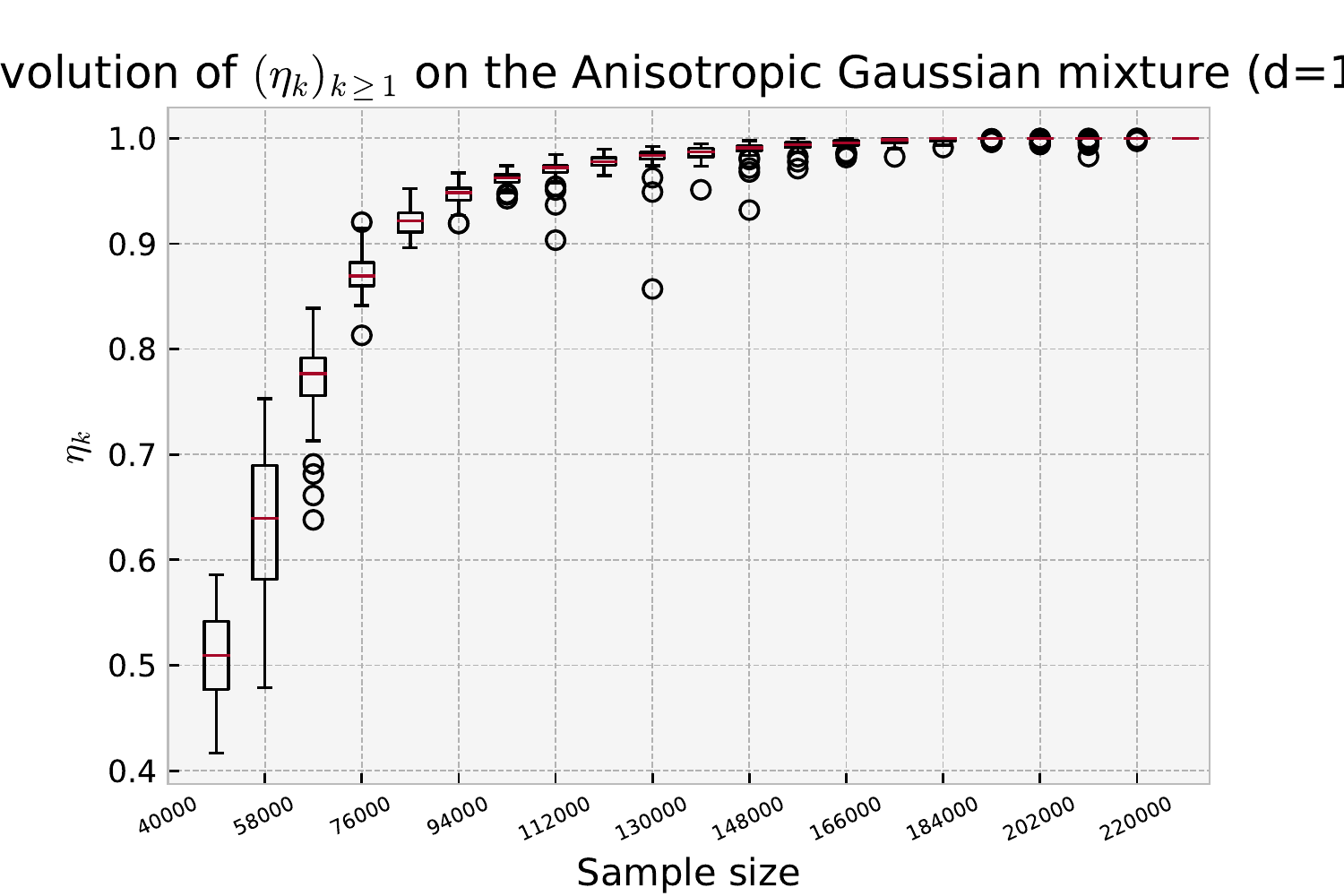} 

  \caption{Boxplot of the values of $(\eta_{k,\alpha})_{k\ge1}$ obtained from \Cref{algo:ADA}, with $\alpha=0.5$.
  }\label{fig:toy_bp}
\end{figure}

\subsection{Bayesian Logistic Regression}

We consider the Bayesian Logistic Regression setting of \cite{DBLP:journals/corr/abs-1206-4665}, also considered in the recent bayesian inference literature 
\cite{liu2016stein,daudel2020infinite}. 
More precisely, we observe a dataset $\data = \lrcb{c_i, \boldsymbol{z}_i}_{i\in I}$ of points $\boldsymbol{z}_i \in \Rset^L$ and binary class labels $c_i \in \lrcb{-1, 1}$, for $i\in I$. 
We assume the following model
: $p_0(\beta) = \mathrm{Gamma}(\beta; a, b)$, and for any $1\le l\le L$ and $1\le i\le I$,
\begin{align*}
  p_0(\omega_\ell| \beta) = \mathcal{N}(\omega_\ell; 0, \beta^{-1}, \;
  p(c_i = 1 | \boldsymbol{z}_i, \boldsymbol{\omega}) = \frac{1}{1 + e^{- \boldsymbol{\omega}^T \boldsymbol{z}_i}},
\end{align*}
where $a$ and $b$ are hyperparameters (shape and inverse scale) that are 
fixed to $a = 1$ and $b = 0.01$, and $\Gamma,\mathcal{N}$ denote the Gamma and Gaussian distribution respectively. The parameter vector is then $[\boldsymbol{\omega},\beta]\in \Rset^{d}$, with $\beta\in \Rset^{+}$ and $d=L+1$. Given a new data point $\boldsymbol{z}_{\mathrm{new}}$, we are interested in predicting the label $c_{\mathrm{new}}$ using the \textit{posterior predictive distribution}
$
p(c_{\mathrm{new}}|\boldsymbol{z}_{\mathrm{new}},\data) = \int p(c_{\mathrm{new}}|\boldsymbol{z}_{\mathrm{new}}, x) p(x|\data) \rmd x, 
$
which plays the role of the target distribution. 

Since the posterior predictive distribution is intractable for this choice of model, we resort to \Cref{algo:RIS} to approximate this quantity. 
We consider two main regimes for the regularization scheme in our numerical experiments:
  (i) constant 
  policy $\eta_k = 1$ for all $k \geq 1$: this case corresponds to Safe Adaptive Importance Sampling algorithm \cite{delyon2021safe}, with no bias but the highest variance,
  (ii) adaptive  
  policy $(\eta_{k,\alpha})_{k\geq1}$ from \Cref{algo:ADA}, with $\alpha \in \lrcb{0.3, 0.2, 0.1, 0.08}$.
Here, the bandwith of the kernel estimate $h_k$ is chosen to be proportional to $k^{-1/(4 + d)}$. This corresponds to the optimal choice in nonparametric estimation when the target density $f$ is at least 2-times continuously differentiable and the kernel has order 2 \cite{10.1214/aos/1176345969}.\ak{ca veut dire quoi has order 2?} We also choose $\lambda_k$ proportional to $1/\sqrt{k}$ for all $k \geq 1$. Furthermore, $q_0$ is chosen as a Gaussian distribution with mean $0. \boldsymbol{u}_d$ 
and variance $5. \boldsymbol{I}_d$, where $\boldsymbol{u}_d$ is the $d$-dimensional vector whose coordinates are all equal to $1$ and $\boldsymbol{I}_d$ is the identity matrix. Finally, our algorithms are initialised by sampling independently $2000$ particles from $q_0$. We further sample independently $200$ particles from $q_k$ at each time $k \geq 1$, for a total computational budget of 61800 particles (i.e $n = 300$ iterations).

We test these algorithms on the Waveform dataset (link provided in the Appendix)
composed of 5000 datapoints  in $\Rset^{L}$ with $L=21$, i.e. $d=22$
. 
Figure \ref{fig:acc:blr} displays the average accuracy of the predictions on the testing set and the  averaged values of the sequence $(\eta_{k,\alpha})_{k\geq1}$ for $\alpha \in \lrcb{0.3, 0.2, 0.1, 0.08}$, where the average is made over 100 replicates of the experiment.

\begin{figure}[h]
  \centering
   \includegraphics[scale=0.29]{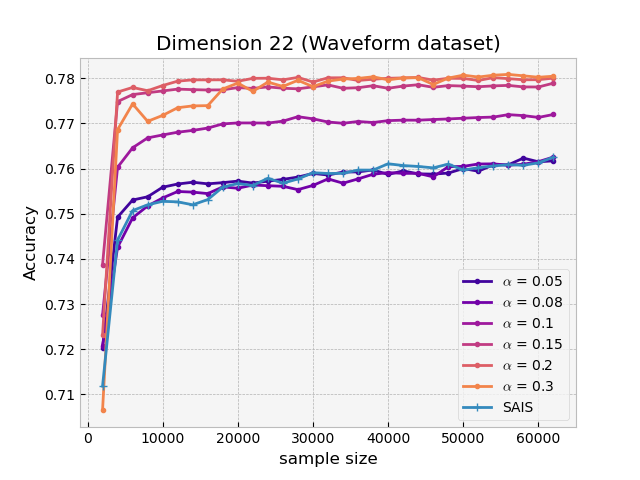}  \includegraphics[scale=0.29]{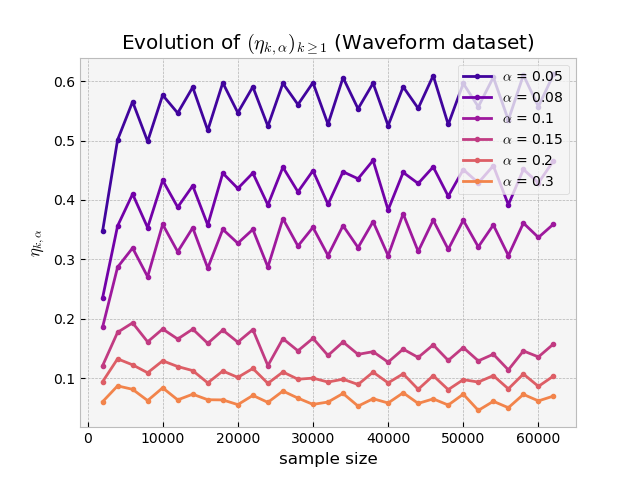} 
  \caption{Left plot: Average accuracy over 100 trials of different learning policies $(\eta_{k,\alpha})_{k \geq 1}$ for
Bayesian Logistic Regression on the Waveform dataset. Right plot: Averaged values of the learning policy $(\eta_{k,\alpha})_{k \geq 1}$ associated to each choice of $\alpha$.}\label{fig:acc:blr}
\end{figure}

As stated in \Cref{prop:tuning_eta}(ii) and can be seen on the right plot of \Cref{fig:acc:blr}, the lower the value of $\alpha$, the higher the value of $\eta_{k,\alpha}$ at time $k$. Recalling that the level of confidence in high values of the learning policy is expressed through the parameter $\alpha$, observe then on the Left plot of \Cref{fig:acc:blr} that a proper tuning of the parameter $\alpha$ allows us to outperform the Safe Adaptive Importance Sampling case (\Cref{algo:RIS} with $(\eta_k)_{k\ge 1}$ constant and equal to $1$), the case $\alpha = 0.2$ yielding the best results here overall in terms of speed and accuracy.

%

	\vspace*{-0.2cm}
\section{Conclusion}

We proposed a new algorithm for AIS, that regularizes the importance weights by raising them to a certain power $\eta$ that plays the role of a regularizer. Relating this algorithm to mirror descent on the space of probability distributions, we prove that it enjoys a uniform convergence guarantee, under mild assumptions on the regularity of the target distribution and safe density, and hyperparameters of the algorithm. Furthermore, by proposing an adaptive way to schedule the regularization, we provide a method that achieves the best numerical performances, compared to a constant regularization or classical safe adaptive importance sampling. The results that we obtained could be very impactful for machine learning tasks where importance sampling is used, e.g. bayesian inference, but also stochastic optimization or reinforcement learning. Future work includesa non asymptotic analysis of our scheme.

\paragraph{Acknowledgment.} The authors would like to express their gratitude to Kamelia Daudel for her many valuable remarks and interesting suggestions.

\bibliography{main_arkiv.bbl}
\bibliographystyle{unsrt}
\newpage

	\appendix

\section{Proofs of the lemmas}

\subsection{Proof of \Cref{lem:bias_variance}}
\label{sec:proof_lemma_bias_variance}

Because $q$ dominates $f$, we have $\mathbb{E} [W(X)] = 1$. The first inequality is due to Jensen's inequality: $1 = \mathbb E [ W(X)] ^{\eta} \geq \mathbb E [ W(X)^{\eta}] $.  When $W(X)$ is not a constant, equality holds if and only if $\eta = 1$.

For the second inequality, write
\begin{equation*}
	\Var[W(X)^\eta] \leq\Var[W(X)^\eta] + (\mathbb E [  W(X)^\eta ]  - 1) ^2 = \mathbb E [ (W(X)^\eta - 1 )^2] \leq \mathbb E [ (W(X)  - 1 )^2]   .
\end{equation*}
The first inequality has already been justified. The second inequality holds because $|w^\eta - 1|\leq |w-1|  $ for all $w\geq 0$.
\qed


\subsection{Proof of  \Cref{lemma:tv}}\label{sec:proof_lemma_tv}

Recall that the general definition of the Kullback-Leibler divergence is given by
$$
\kl[f][q] = \int f \log \lr{\frac{f}{q}} + \int q - \int f \eqsp.
$$
Note that it extends the definition given in \Cref{lemma:tv} to unnormalized densities.
Let $q$ be a probability density function and set $\tq=f^{\eta} q ^{1-\eta}$. Then, using that $\log(u) \leq u - 1$ for all $u> 0$, we have that
\begin{align}
	\label{eq:normalised}
	KL \lr{f \bigg\| \frac{\tq}{\int \tq} } &= \int f \log \lr{\frac{f}{\tq} \cdot \int \tq} \nonumber \\
	& = \int f \log \lr{\frac{f}{\tq}} + \log\lr{\int \tq} \nonumber \\
	&\leq \kl[f][\tq] \eqsp.
\end{align}
Furthermore, by definition of $\tq$, it holds that
\begin{align*}
	\kl[f][\tq] & = \int \log \left( \frac{f}{f^{\eta} q ^{1-\eta}}  \right) f + \int  f^{\eta} q^{1-\eta}  - 1\\
	& =  (1-\eta) \int \log(f /  q    ) f + \int  f^{\eta} q ^{1-\eta}  - 1\\
	& =  (1-\eta) KL (f \|  q   )  + \int  f^{\eta} q ^{1-\eta}  - 1 \\
	& \leq (1-\eta) \kl[f][q] \eqsp,
\end{align*}
where the last inequality results from Jensen's inequality applied to the convex function $u \mapsto u^\eta$:
\begin{align*}
	\int f^{\eta} q^{1-\eta} =  \int  q \lr{ \frac{f}{q}}^{\eta} \leq  \lr{\int  f}^\eta  = 1 \eqsp.
\end{align*}
Combining with \eqref{eq:normalised} and letting $(q^*_k)_{k\geq 1}$ be defined by \eqref{eq:emd} starting from an initial probability density function $q_1$, by recursion we have for all $n \in \nstar$,
$$
\kl[f][q^*_{n+1}] \leq \kl[f][q_1] \prod_{k=1}^{n}  ( 1-\eta_k ).
$$
By applying Pinsker's inequality, we finally obtain
\begin{align*}
	\int | f - q^*_{n+1} | \leq \sqrt{ 2 \kl[f][q_1] }  \prod_{k=1}^{n}  ( 1-\eta_k )^{1/2 }.
\end{align*}
\qed

\paragraph{Convergence rates obtained from \Cref{lemma:tv}.}
Assuming that $ \kl[f][q_1] <+\infty$ and noticing that
$$
\log \lr{\prod_{k=1}^{n}  ( 1-\eta_k )^{1/2 }} \leq - \frac{1}{2} \sum_{k=1}^{n} \eta_k,
$$
we get the following convergence rates:
\begin{itemize}
	\item taking $\eta_k = c  / k$ with $0< c< 1$ yields
	$  \int | f - q^*_{n+1} | =   O(   n^{-c / 2}   ),$
	\item taking $\eta_k = c / {k}^\beta$ with $0< c< 1$ and $\beta\in [0,1)$ yields
	$\int | f - q^*_{n+1} | = O(  \exp( - C   n^{(1-\beta) }   )     ,$ with  $ C = c  / (2(1-\beta))$.
\end{itemize}

\section{Deriving \eqref{eq:emd} from an optimisation
	perspective}
\label{sec:NoteEMD}

One way to approximate an unknown probability density is to formulate an optimisation problem over a certain space of distributions, as it is typically done in \textit{variational inference}. The common choice in variational inference then often corresponds selecting the Kullback-Leibler divergence and to try to find
\begin{align*}
	q^\star = \arginf_{q \in \mathcal{Q}}~ \kl \eqsp,
\end{align*}
where $\mathcal{Q}$ is a valid set of probability densities on $(\rset^d, \mathcal{B}(\rset^d))$, $\mathcal{B}(\rset^d)$ denoting the Borel $\sigma$-field of $\rset^d$, and where $\kl[f][q]$ stands for the Kullback-Leibler divergence between $f$ and $q$, i.e $\kl[f][q] = \int \log\lr{f/ q} f$.

Following the approach of \cite{dai2016provable}, one way to solve this optimisation problem is to resort to the \textit{entropic mirror descent} algorithm applied to the objective function $q \mapsto \kl$. When $\mathcal{Q}$ corresponds to the set of probability density functions on $(\rset^d, \mathcal{B}(\rset^d))$, this algorithm admits a closed-form solution and generates a sequence $(q_{k})_{k\ge 1}$ in $\Q$ satisfying \eqref{eq:emd}.

To see this, let us start with a preliminary result. Let $h$ be a real-valued measurable function defined on $\rset^d$ such that $\int \exp(-h) \rmd \lambda <\infty$. For any probability density function $f$ on $\rset^d$ such that $ \int |h + \log(f)| f\rmd \lambda  < \infty$, define
\begin{align*}
	\Psi(f) = \int (h  +  \log ( f ) )  f\rmd \lambda .
\end{align*}

\begin{lemma}
The minimum of the function $\Psi$ is attained for $f \propto \exp(-h) $.
\end{lemma}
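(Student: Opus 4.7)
The plan is to reduce the minimization of $\Psi$ to the non-negativity of the Kullback--Leibler divergence (Gibbs' inequality). Set $Z = \int \exp(-h)\,\rmd\lambda$ and $f^\star = \exp(-h)/Z$, which is the candidate minimizer. The key identity I would derive is
\begin{align*}
\Psi(f) = \KL(f \| f^\star) - \log Z,
\end{align*}
valid for every probability density $f$ satisfying the integrability hypothesis. Once this identity is established, the conclusion is immediate: $\KL(f\|f^\star)\ge 0$ with equality iff $f = f^\star$ $\lambda$-almost everywhere, so $\Psi$ is minimized precisely when $f \propto \exp(-h)$.

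To get the identity, I would first rewrite $h = -\log f^\star - \log Z$ (valid on $\{f^\star>0\}$; since $\int \exp(-h)\,\rmd\lambda < \infty$ forces $f^\star > 0$ wherever $h$ is finite, no issues arise in the integrals). Then
\begin{align*}
\Psi(f) &= \int h\, f \,\rmd\lambda + \int f\log f\,\rmd\lambda \\
&= -\int f\log f^\star\,\rmd\lambda - \log Z \int f\,\rmd\lambda + \int f\log f\,\rmd\lambda \\
&= \int f \log\!\lr{f/f^\star}\,\rmd\lambda - \log Z,
\end{align*}
where I have used $\int f\,\rmd\lambda = 1$. The right-hand side equals $\KL(f\|f^\star) - \log Z$ in the normalized sense used in \Cref{lemma:tv} (both densities integrate to $1$, so the extra $\int f^\star - \int f$ term vanishes).

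The only mild technical point is checking that the algebraic splittings above are legitimate under the stated integrability assumption $\int |h + \log f|\,f\,\rmd\lambda < \infty$. This hypothesis ensures that $\int hf\,\rmd\lambda$ and $\int f \log f\,\rmd\lambda$ exist separately (up to the standard convention on $\{f=0\}$), so $\Psi(f)$ is well defined and the manipulations make sense. I do not foresee any genuine obstacle: the statement is essentially the variational characterization of the Gibbs distribution, and the proof is a one-line computation plus an invocation of Gibbs' inequality. I would conclude by noting that equality in $\KL(f\|f^\star)\ge 0$ occurs iff $f = f^\star$ a.e., which gives both existence and (essential) uniqueness of the minimizer $f \propto \exp(-h)$.
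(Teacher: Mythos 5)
Your proof is correct and is essentially the paper's argument in different packaging: the paper applies Jensen's inequality to the convex map $u\mapsto e^{-u}$ under the measure $f\,\rmd\lambda$ to obtain $\Psi(f)\ge -\log\int e^{-h}\,\rmd\lambda$ directly, whereas you first complete the expression to the exact identity $\Psi(f)=\KL(f\,\|\,f^\star)-\log Z$ and then invoke Gibbs' inequality, which is the same Jensen step in disguise. Both routes give the same lower bound $-\log Z$ and the same equality condition $f\propto e^{-h}$ almost everywhere (your version makes the uniqueness of the minimizer slightly more explicit).
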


\begin{proof}
	By applying Jensen's inequality to the convex function $u \mapsto \exp(- u)$, we obtain
	\begin{align*}
		\exp(- \Psi(f)) \leq \int \exp \lr{- \lrb{ h + \log ( f ) }}  f \rmd \lambda \eqsp.
	\end{align*}
	Thus, we have that
	\begin{align*}
		\Psi(f) \geq - \log \lr{ \int \exp \lr{- h} \rmd \lambda} \eqsp,
	\end{align*}
	where the r.h.s does not depend on $f$ and equality is attained whenever $f \propto \exp(-h) $ almost everywhere. 
\end{proof}


Next, we rewrite \eqref{eq:emd} as an Entropic Mirror Descent step.
For any $x \in \Rset^d$ probability density $q \in \mathcal{Q}$, we set $h_q(x) = \log(q(x)/f(x)) + 1$. Given a probability density $q^*_k$ and $\eta_k > 0$, one iteration of the (Infinite-Dimensional) Entropic Mirror Descent algorithm applied to the objective function $q \mapsto \kl$ with a learning rate $\eta_k$ corresponds to finding
\begin{align*}
	q^*_{k+1} = \argmin_{q \in \mathcal{Q}} ~ \eta_k \int h_{q^*_k}(x) q(x) \rmd x + \kl[q][q^*_k] \eqsp.
\end{align*}
In this expression, which is called the proximal form of the Entropic Mirror Descent, the function $h_{q^*_k}$ plays the role of the gradient of $\kl$ w.r.t the probability density $q^*_k$ (here, it corresponds to its Fréchet differential). Based on the previous paragraph, we deduce that if $\mathcal{Q}$ corresponds to the set of probability density functions on $(\rset^d, \mathcal{B}(\rset^d))$, then
\begin{align*}
	q^*_{k+1} = \frac{ q^*_k(x) e^{- \eta_k h_{q^*_k}(x)}}{\int  q^*_k(x') e^{- \eta_k h_{q^*_k}(x')} \rmd x'} \propto f^{\eta_k}(x)  q^*_k(x) ^ {1-\eta_k} \eqsp,
\end{align*}
that is, we recover \eqref{eq:emd}.

Under minimal assumptions, the convergence towards $f$ can be established with a known convergence rate for an appropriate choice of learning policy $(\eta_k)_{k \geq 1}$.
\begin{lemma}\label{lemma:cv:emd}
	Let $(\eta_k)_{k \geq 1}$ be a sequence of positive learning rates and let $\mathcal{Q}$ be set of probability density functions on $(\rset^d, \mathcal{B}(\rset^d))$. Let $q_1 \in \mathcal{Q}$ and let the sequence $(q^*_k)_{k\geq 1}$ be defined by \eqref{eq:emd}. Assume that $x \mapsto h_q(x)$ is bounded by a positive constant $L$ for all $x \in \Rset^d$ and $q \in \mathcal{Q}$. Then, for all $n \in \nstar$, we have
	$$
	\KL \lr{\sum_{k=1}^n \frac{\eta_k q^*_k}{\sum_{k'=1}^n \eta_{k'}}  \bigg\| f } \leq \frac{\sum_{k=1}^n \eta_k^2 L^2 /2}{\sum_{k=1}^n \eta_{k}} + \frac{\kl[f][q_1]}{\sum_{k=1}^n \eta_{k}} \eqsp .
	$$
	In particular, taking $\eta_k = c_0/ \sqrt{k}$ with $c_0 > 0$ yields an $O(\log(n)/\sqrt{n})$ convergence rate. If the total number of iterations $n$ is known in advance, setting $\eta_k = c_0/\sqrt{n}$ for all $k=1 \ldots n$  with $c_0 > 0$ yields an $O(1/\sqrt{n})$ convergence rate.
\end{lemma}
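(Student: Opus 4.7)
This is the standard mirror descent regret bound in infinite dimension, specialized to the case where the Bregman divergence is the KL divergence itself (and hence coincides with the objective). The plan is: (i) derive a one-step inequality via the optimality conditions of the proximal subproblem that produces \eqref{eq:emd}; (ii) telescope the inequality over $k=1,\ldots,n$, with $q=f$ playing the role of the reference point; and (iii) apply convexity of $q\mapsto\kl[f][q]$ to pass to the weighted average of the iterates.

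\textbf{One-step inequality.} Fix $k\geq 1$. By definition, $q^*_{k+1}$ minimizes $q \mapsto \eta_k \int h_{q^*_k}\, q\, \rmd x + \kl[q][q^*_k]$ over $\mathcal{Q}$, and the preliminary lemma at the start of \Cref{sec:NoteEMD} gives the closed form $q^*_{k+1}\propto q^*_k \exp(-\eta_k h_{q^*_k})$. A short computation (standard ``three-point identity'' for the Bregman divergence associated with the negative entropy) then yields, for every probability density $q$,
\begin{align*}
\eta_k \int h_{q^*_k}\,(q^*_k - q)\,\rmd x \;\leq\; \kl[q][q^*_k] - \kl[q][q^*_{k+1}] \;+\; \eta_k \int h_{q^*_k}(q^*_k-q^*_{k+1})\,\rmd x - \kl[q^*_{k+1}][q^*_k].
\end{align*}
The last two terms on the right are bounded using the hypothesis $\|h_{q^*_k}\|_\infty \leq L$ together with Pinsker's inequality ($\kl[q^*_{k+1}][q^*_k]\geq \tfrac12\|q^*_{k+1}-q^*_k\|_1^2$) and Young's inequality:
\begin{align*}
\eta_k \int h_{q^*_k}(q^*_k-q^*_{k+1})\,\rmd x - \kl[q^*_{k+1}][q^*_k]
\;\leq\; \eta_k L \,\|q^*_k - q^*_{k+1}\|_1 - \tfrac12 \|q^*_{k+1}-q^*_k\|_1^2
\;\leq\; \tfrac{\eta_k^2 L^2}{2}.
\end{align*}
On the other hand, since $h_{q^*_k}(x)=\log(q^*_k(x)/f(x))+1$ is (up to the additive constant) the first variation of $q\mapsto\kl[f][q]$ evaluated at $q^*_k$, a direct computation gives $\int h_{q^*_k}(q^*_k-q)\,\rmd x = \kl[f][q] - \kl[f][q^*_k] + \kl[q^*_k][q] \geq \kl[f][q] - \kl[f][q^*_k]$. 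Specializing to $q=f$ and using $\kl[f][f]=0$ yields
\begin{align*}
\eta_k\, \kl[f][q^*_k] \;\leq\; \kl[f][q^*_k] - \kl[f][q^*_{k+1}] + \tfrac{\eta_k^2 L^2}{2} \;-\; \eta_k\kl[f][q^*_k] \cdot 0,
\end{align*}
i.e.\ $\eta_k\,\kl[f][q^*_k] \le \kl[f][q^*_k]-\kl[f][q^*_{k+1}] + \eta_k^2L^2/2$ after rearrangement (the key point being that the reference inequality is taken with $q=f$, so $\kl[q][q^*_k]=\kl[f][q^*_k]$).

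\textbf{Telescoping and averaging.} Summing the one-step inequality from $k=1$ to $n$, the telescoping of $\kl[f][q^*_k]-\kl[f][q^*_{k+1}]$ and nonnegativity of $\kl[f][q^*_{n+1}]$ give
\begin{align*}
\sum_{k=1}^n \eta_k\, \kl[f][q^*_k] \;\leq\; \kl[f][q_1] + \sum_{k=1}^n \tfrac{\eta_k^2 L^2}{2}.
\end{align*}
Since $q\mapsto \kl[f][q]$ is convex on $\Q$ (as $q\mapsto -\log q$ is convex), Jensen's inequality applied to the weights $\eta_k/\sum_{k'}\eta_{k'}$ gives the announced bound
$$\kl\!\Big(f\,\Big\|\,\tfrac{1}{\sum_{k'}\eta_{k'}}\sum_{k=1}^n \eta_k q^*_k\Big)\;\leq\;\tfrac{\sum_{k=1}^n \eta_k \kl[f][q^*_k]}{\sum_{k=1}^n \eta_k}\;\leq\;\tfrac{\kl[f][q_1]}{\sum_{k=1}^n\eta_k}+\tfrac{\sum_{k=1}^n \eta_k^2 L^2/2}{\sum_{k=1}^n\eta_k}.$$
The stated rates follow by elementary calculations: for $\eta_k=c_0/\sqrt k$ we have $\sum\eta_k^2=O(\log n)$ and $\sum\eta_k\asymp \sqrt n$, yielding $O(\log(n)/\sqrt n)$; and for $\eta_k=c_0/\sqrt n$ constant over a horizon $n$ known in advance, $\sum\eta_k^2=c_0^2$ and $\sum\eta_k = c_0\sqrt n$, yielding $O(1/\sqrt n)$.

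\textbf{Main obstacle.} The algebra is routine once the one-step inequality is in place; the only delicate step is justifying the three-point identity and the Pinsker-based quadratic bound in the infinite-dimensional setting (in particular, ensuring the integrals defining $h_{q^*_k}$ against $q^*_k-q^*_{k+1}$ are finite and that the minimizer in the proximal step actually lies in $\Q$). The boundedness of $h_q$ assumed in the statement is precisely what makes both of these issues harmless, since it reduces everything to an $\ell^\infty/\ell^1$ duality argument identical to the finite-dimensional case.
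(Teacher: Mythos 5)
Your overall skeleton --- the proximal/three-point inequality for the entropic Bregman divergence, the Pinsker-plus-Young bound $\eta_k L a - a^2/2 \le \eta_k^2 L^2/2$, the telescoping, and a final Jensen step --- is exactly the paper's argument. But there is a genuine gap: you end up bounding the wrong divergence. The lemma asserts a bound on $\KL(\bar q \,\|\, f)$ with $\bar q = \sum_{k=1}^n \eta_k q^*_k / \sum_{k'=1}^n \eta_{k'}$, i.e.\ the average sits in the \emph{first} slot. Your chain produces $\sum_k \eta_k \KL(f\|q^*_k) \le \KL(f\|q_1) + \sum_k \eta_k^2 L^2/2$ and then applies Jensen to $q\mapsto \KL(f\|q)$ (convexity in the \emph{second} argument), which yields a bound on $\KL(f\,\|\,\bar q)$ --- a different quantity that neither implies nor is implied by the stated one.

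The error originates in identifying $h_q(x)=\log(q(x)/f(x))+1$ as the first variation of $q\mapsto \KL(f\|q)$; it is in fact the first variation of $q\mapsto \KL(q\|f)$, which is the objective actually being minimized. Consequently your three-term expansion is not an identity: a direct computation gives $\int h_{q^*_k}(q^*_k-q) = \KL(q^*_k\|f) + \KL(q\|q^*_k) - \KL(q\|f)$, not $\KL(f\|q)-\KL(f\|q^*_k)+\KL(q^*_k\|q)$. (Relatedly, the lower bound you state at $q=f$ reduces to $\int h_{q^*_k}(q^*_k-f)\ge -\KL(f\|q^*_k)$, from which your displayed one-step conclusion does not follow.) The repair is small and is what the paper does: from the correct identity --- or, as in the paper, from convexity of $u\mapsto u\log u$ --- one gets $\int h_{q^*_k}(q^*_k-f)\ge \KL(q^*_k\|f)$, so the same telescoped inequality controls $\sum_k\eta_k\KL(q^*_k\|f)$, and the final Jensen step should invoke convexity of the KL divergence in its \emph{first} argument to pass to $\KL(\bar q\,\|\,f)$. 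Everything else --- the $\ell^\infty/\ell^1$ duality using $\|h_{q^*_k}\|_\infty\le L$, Pinsker's inequality, the telescoping, and the two rate computations --- is correct and coincides with the paper's proof.
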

The proof of this result can be adapted from \cite[Theorem 4.2]{MAL-050}. It is provided here for the sake of completeness.

\begin{proof}
	For all $k \geq 1$, set $\Delta_k = \kl[q^*_k]$. By convexity of the function $u \mapsto u \log u$, we have
	\begin{align*}
		\Delta_k  & = \int \log \lr{\frac{q^*_k(x)}{f(x)}} q^*_k(x) \rmd x \\
		& \leq \int \lrb{ \log \lr{\frac{q^*_k(x)}{f(x)}} + 1} (q^*_k(x)- f(x)) \rmd x \eqsp,\\
		& = \int h_{q^*_k}(x) (q^*_k(x) - f(x)) \rmd x \eqsp.
	\end{align*}
	Since the integral of any constant w.r.t $q^*_k - f$ is null, we deduce
	\begin{align*}
		\eta_k \Delta_k  & \leq \int \log \lr{\frac{q^*_k(x)}{q^*_{k+1}(x)}} (q^*_k(x)- f(x)) \rmd x \\
		&=\int \log\lr{\frac{q^*_k(x)}{q^*_{k+1}(x)}} q^*_k(x) \rmd x - \int \log\lr{\frac{q^*_k(x)}{q^*_{k+1}(x)}} f(x) \rmd x\\
		&= \int \log\lr{\frac{q^*_k(x)}{q^*_{k+1}(x)}} (q^*_k(x) - q^*_{k+1}(x)) \rmd x -\kl [q^*_{k+1}][q^*_k] \\
		&\qquad + \lrb{ \kl[f][q^*_k]-\kl[f][q^*_{k+1}]}
	\end{align*}
	Let us consider the first term of the r.h.s. of the latter inequality. We have that
	\begin{align*}
		\int \log\lr{\frac{q^*_k(x)}{q^*_{k+1}(x)}} (q^*_k(x) - q^*_{k+1}(x)) \rmd x &= \eta_k \int h_{q^*_k}(x) (q^*_k(x) - q^*_{k+1}(x)) \rmd x \\
		& \leq \eta_k L \int |q^*_k - q^*_{k+1}| \eqsp.
	\end{align*}
	since by assumption $h_{q^*_k}$ is bounded by $L$. Additionally, we have by Pinsker's inequality that
	$$
	-\kl [q^*_{k+1}][q^*_k] \leq -\frac12 \lr{\int |q^*_k - q^*_{k+1}|}^2  \eqsp.
	$$
	Now combining with the fact that $\eta_k L a -a^2/2 \leq (\eta_k L)^2/2$ for all $a\geq0$, we get:
	$$
	\int \log\lr{\frac{q^*_k(x)}{q^*_{k+1}(x)}} (q^*_k(x) - q^*_{k+1}(x)) \rmd x -\kl [q^*_{k+1}][q^*_k] \leq \frac{(\eta_k L)^2}{2}
	$$
	and as a consequence we deduce
	\begin{align*}
		\eta_k \Delta_k  & \leq \frac{(\eta_k L)^2}{2} + \lrb{ \kl[f][q^*_k]-\kl[f][q^*_{k+1}]} \eqsp.
	\end{align*}
	Finally, as we recognize a telescoping sum in the right-hand side, we have
	$$
	\sum_{k=1}^n \eta_k \Delta_k \leq \sum_{k=1}^n \eta_k^2 L^2 /2 + \kl[f][q_1] \eqsp
	$$
	that is, by convexity of the mapping $q \mapsto \kl$,
	$$
	\KL \lr{\sum_{k=1}^n \frac{\eta_k q^*_k}{\sum_{k'=1}^n \eta_{k'}}  \bigg\| f } - \kl[f] \leq \frac{\sum_{k=1}^n \eta_k^2 L^2 /2}{\sum_{k=1}^n \eta_{k}} + \frac{\kl[f][q_1]}{\sum_{k=1}^n \eta_{k}} \eqsp
	$$
	Then, notice that taking $\eta_k = \eta_0/ \sqrt{k}$ with $\eta_0 > 0$ yields an $O(\log(n)/\sqrt{n})$ convergence rate and that setting $\eta_k = \eta_0/\sqrt{n}$ for all $k=1 \ldots n$ yields an $O(1/\sqrt{n})$ convergence rate.
\end{proof}

\section{Proof of Proposition \ref{prop:initial_bound}}\label{sec:proof_initial_bound}

The proof is organized in three parts. First we provide high-level results related to Freedman's inequality for martingales.  Then we provide some intermediary technical results, and finally we conclude with the proof of \Cref{prop:initial_bound}.

\subsection{Bernstein inequalities for martingale processes}\label{sec:append_freedman}

The two following propositions can be found in \cite{delyon2021safe}.

\begin{prop}\label{th:freedman}
	Let $(\Omega, \mathcal F , (\mathcal F _{k})_{k\geq 1}, \mathbb P)$ be a filtered space. Let $(Y_k)_{1 \le k\le n} $ be real valued random variables such that
	\begin{align*}
		&\mathbb E [Y_k| \mathcal F _{k-1}] = 0,\quad\text{for all} ~k
		=1,\ldots, n
		\eqsp.
	\end{align*}
	Then, for all $t\geq 0$ and all ${v},{m}>0$,
	\begin{align*}
		\mathbb P\Bigg(\Big|\sum_{k=1}^nY_k\Big|\geq t, \,\max_{k=1,\ldots, n}|Y_k |\leq {m},\,
		\sum_{k=1}^n &\mathbb E[Y_k^2|\mathcal F _{k-1}]\leq {v}\Bigg) \leq 2\exp\left(-\frac{t^2}{2({v}+t{m}/3)} \right) \eqsp.
	\end{align*}
\end{prop}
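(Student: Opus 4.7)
The plan is to follow the classical Bernstein-type argument for martingales via the exponential-moment method. The building block is the per-step estimate derived from $\mathrm{e}^x \le 1+x+x^2\phi(x)$ with $\phi(x)=(\mathrm{e}^x-1-x)/x^2$ nondecreasing: combined with $\mathbb{E}[Y_k|\mathcal F_{k-1}]=0$ and $1+u\le \mathrm{e}^u$, whenever $|Y_k|\le m$ almost surely one has
\[\mathbb{E}\bigl[\mathrm{e}^{\lambda Y_k}\,\big|\,\mathcal F_{k-1}\bigr] \le \exp\!\bigl(\lambda^2\phi(\lambda m)\,\mathbb{E}[Y_k^2|\mathcal F_{k-1}]\bigr), \qquad \lambda\ge 0.\]
In the present statement the constraints $|Y_k|\le m$ and $V_n\eqdef\sum_{k=1}^n\mathbb{E}[Y_k^2|\mathcal F_{k-1}]\le v$ appear \emph{inside} the probability, so we cannot invoke the above bound directly and must instead truncate via a stopping time.

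Specifically, set $\tau = n\wedge\tau_1\wedge\tau_2$ with $\tau_1 = \inf\{k\ge 0 : V_{k+1}>v\}$ (a stopping time since $V_{k+1}$ is $\mathcal F_k$-measurable) and $\tau_2 = \inf\{k\ge 1 : |Y_k|>m\}$. On the event $A=\{\max_k|Y_k|\le m,\,V_n\le v\}$ one has $\tau=n$, hence $\sum_{k=1}^n Y_k = \sum_{k=1}^\tau Y_k$ there; moreover $V_\tau\le v$ in all realizations. Introducing
\[Z_k = \exp\!\Bigl(\lambda\textstyle\sum_{j=1}^k Y_j - \lambda^2\phi(\lambda m)V_k\Bigr), \qquad Z_0=1,\]
the stopped process $(Z_{k\wedge\tau})_{k\ge 0}$ is a supermartingale (after careful handling of the possibly unbounded increment at $\tau_2$), and Markov's inequality yields
\[\mathbb{P}\bigl(\textstyle\sum_{k=1}^n Y_k\ge t,\,A\bigr) \le \exp\!\bigl(-\lambda t + \lambda^2\phi(\lambda m)v\bigr), \qquad \lambda>0.\]

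The Bennett choice $\lambda = m^{-1}\log(1+tm/v)$ combined with the elementary inequality $(1+u)\log(1+u)-u\ge u^2/(2+2u/3)$ for $u\ge 0$ converts the exponent into $-t^2/(2(v+tm/3))$; applying the same argument to $(-Y_k)$ and a union bound produce the factor $2$ in the two-sided statement. The main obstacle is rigorously establishing the supermartingale property of $(Z_{k\wedge\tau})$: since $|Y_k|$ is $\mathcal F_k$-measurable but not predictable, the stopping rule must be arranged so that the per-step exponential bound applies to every increment contributing to $Z_{k\wedge\tau}$ without contamination from the possibly large jump at $\tau$ itself. Once this technical point is handled, the rest is a routine Bennett--Bernstein optimization.
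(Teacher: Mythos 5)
First, a point of comparison: the paper does not prove this proposition at all — it is imported verbatim from \cite{delyon2021safe} — so there is no in-paper argument to measure you against. Your skeleton (exponential supermartingale, Chernoff bound, Bennett choice $\lambda=m^{-1}\log(1+tm/v)$ together with $(1+u)\log(1+u)-u\ge u^2/(2+2u/3)$, then symmetrization for the factor $2$) is the standard and correct route to the constant $2(v+tm/3)$. The gap sits exactly at the point you flag and then defer. The variance truncation through $\tau_1$ is sound, because $V_{k+1}=\sum_{j\le k+1}\mathbb E[Y_j^2\mid\mathcal F_{j-1}]$ is $\mathcal F_k$-measurable, so stopping at $\tau_1$ genuinely excludes the offending increment. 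But $\{|Y_k|>m\}$ is only $\mathcal F_k$-measurable, not $\mathcal F_{k-1}$-measurable, so on $\{\tau_2=k\}$ the stopped variable $Z_{k\wedge\tau}=Z_{\tau_2}$ already contains the factor $\mathrm e^{\lambda Y_{\tau_2}}$ coming from the very increment that violates the bound, and $\mathbb E[\mathrm e^{\lambda Y_k}\mathbb I_{\{\tau_2=k\}}\mid\mathcal F_{k-1}]$ admits no control under the stated hypotheses (no exponential integrability of $Y_k$ is assumed). No $(\mathcal F_k)$-stopping time can stop ``just before'' $|Y_k|$ first exceeds $m$, so $(Z_{k\wedge\tau})$ is not a supermartingale and this cannot be repaired by a cleverer choice of $\tau$.

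The standard repair is to carry the boundedness event inside the supermartingale rather than into a stopping time. With $\phi(x)=(\mathrm e^x-1-x)/x^2$ nondecreasing and $\phi\ge 1/2$ on $[0,\infty)$, the quadratic $y\mapsto 1+\lambda y+\lambda^2\phi(\lambda m)y^2$ has negative discriminant and is therefore nonnegative for \emph{all} $y\in\Rset$; combined with $\mathrm e^{\lambda y}\le 1+\lambda y+\lambda^2\phi(\lambda m)y^2$ for $y\le m$, this yields the pointwise bound $\mathrm e^{\lambda y}\,\mathbb I_{\{|y|\le m\}}\le 1+\lambda y+\lambda^2\phi(\lambda m)y^2$ valid for every real $y$. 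Taking conditional expectations and using $\mathbb E[Y_k\mid\mathcal F_{k-1}]=0$ shows that $U_k=\mathbb I_{\{\max_{j\le k}|Y_j|\le m\}}\exp\lr{\lambda S_k-\lambda^2\phi(\lambda m)V_k}$ is a genuine supermartingale with $U_0=1$; Markov's inequality applied to $U_n$ on the event $\{V_n\le v\}$ then gives $\mathbb P(S_n\ge t,\,A)\le\exp(-\lambda t+\lambda^2\phi(\lambda m)v)$, after which your Bennett optimization goes through unchanged. Until this substitution is made, the proof is incomplete at its central step.
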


\begin{prop}\label{cor:freedmanx}
	Let $(\Omega, \mathcal F , (\mathcal F _{k})_{k\geq 1}, \mathbb P)$ be a filtered space. Let $(Y_k)_{k\geq 1} $ be a sequence of real valued stochastic processes defined on
	$\mathbb R^d$, adapted to $(\mathcal F_k)_{k\geq 1}$, such that
	for any $x\in\mathbb R^d$,
	\begin{align*}
		&\mathbb E [Y_k(x)| \mathcal F _{k-1}] = 0,\quad \text{for all } k\geq 1 \eqsp.
	\end{align*}
	Consider $\epsilon>0$ and let  $(\tilde Y_k)_{k \geq 1} $ be another $(\mathcal F_k)_{k\geq 1}$-adapted sequence
	of nonnegative stochastic processes defined on $\mathbb R^d$ such that for all $k\geq 1$ and $x\in\mathbb R^d$
	\begin{align*}
		&\sup_{\|y\|\le\epsilon}|Y_k(x+y)-Y_k(x)|\le \tilde Y_k(x) \eqsp.
	\end{align*}
	Let $n\geq 1$ and assume that for some $A\ge 0$ and some set $\Omega_1\subset \Omega$, there exist $m, v, \tau \in \Rset^{+}$ such that  for all $\omega\in\Omega_1$
	and $\|x\|\le A$,
	\begin{align}
		& \max_{k=1,\ldots, n} |Y_k(x)|\le m\label{thmm}\\
		&\sum_{k=1} ^ n \mathbb E\big[Y_k(x)^2|\mathcal F_{k-1}]\le v\\
		&\sum_{k=1} ^ n \mathbb E\big[\tilde Y_k(x)|\mathcal F_{k-1}\big]\le \tau \eqsp.
	\end{align}
	Then, for all $t\geq 0$,
	\begin{align*}
		&\mathbb P\Big(\sup_{\|x\|\le A}\big|\sum_{k=1} ^ n Y_k(x)\big|>t+\tau,\Omega_1\Big)\le 4(1+2A/\varepsilon)^d \exp\left(-\frac{t^2}{8(\tilde v+2mt/3)} \right) \eqsp,
	\end{align*}
	with $\tilde v=\max(v,2m\tau).$
\end{prop}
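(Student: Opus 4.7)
My plan is to prove Proposition \ref{cor:freedmanx} via a standard $\epsilon$-net/chaining argument: reduce the continuous supremum over $B(0, A) = \{x \in \mathbb{R}^d : \|x\| \le A\}$ to a union bound over finitely many centers, then apply the pointwise Freedman inequality (Proposition \ref{th:freedman}) at each center, using the process $\tilde Y_k$ to control the oscillation within each cell.

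First, I would take a minimal $\epsilon$-net $\mathcal{N} \subset B(0, A)$ of cardinality $|\mathcal{N}| \le (1 + 2A/\epsilon)^d$ and, for each $x \in B(0, A)$, let $\pi(x) \in \mathcal{N}$ satisfy $\|x - \pi(x)\| \le \epsilon$. The continuity hypothesis $|Y_k(x) - Y_k(\pi(x))| \le \tilde Y_k(\pi(x))$ yields the deterministic decomposition
\[
\Big|\sum_{k=1}^n Y_k(x)\Big| \le |V_n(\pi(x))| + \sum_{k=1}^n \tilde Y_k(\pi(x))\,, \qquad V_n(x_0) := \sum_{k=1}^n Y_k(x_0)\,.
\]
Splitting $\sum_k \tilde Y_k(x_0) = A_n(x_0) + M_n(x_0)$ by Doob decomposition, with predictable part $A_n(x_0) := \sum_k \mathbb{E}[\tilde Y_k(x_0) \mid \mathcal{F}_{k-1}] \le \tau$ on $\Omega_1$ and martingale part $M_n(x_0)$, and using the elementary identity $|V_n(x_0)| + M_n(x_0) \le \max(|V_n(x_0) + M_n(x_0)|,\,|V_n(x_0) - M_n(x_0)|)$, the event $\{\sup_{\|x\|\le A} |\sum_k Y_k(x)| > t + \tau\} \cap \Omega_1$ is contained in
\[
\bigcup_{x_0 \in \mathcal{N}} \bigl(\bigl\{|V_n(x_0) + M_n(x_0)| > t\bigr\} \cup \bigl\{|V_n(x_0) - M_n(x_0)| > t\bigr\}\bigr)\,.
\]

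Second, I would apply Proposition \ref{th:freedman} to each of the two martingales $V_n(x_0) \pm M_n(x_0)$ at every $x_0 \in \mathcal{N}$. Each combined increment reads $Y_k(x_0) \pm (\tilde Y_k(x_0) - \mathbb{E}[\tilde Y_k(x_0) \mid \mathcal{F}_{k-1}])$, and using $|Y_k(x_0)| \le m$ together with an implicit truncation $\tilde Y_k \wedge 2m$ (harmless because for $x, \pi(x) \in B(0, A)$ both $|Y_k(x)|$ and $|Y_k(\pi(x))|$ are bounded by $m$, so $|Y_k(x) - Y_k(\pi(x))| \le 2m$), the increments are $O(m)$ in absolute value. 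The predictable quadratic variation is controlled via $\mathbb{E}[\tilde Y_k^2 \mid \mathcal{F}_{k-1}] \le 2m\, \mathbb{E}[\tilde Y_k \mid \mathcal{F}_{k-1}]$ and $(a+b)^2 \le 2(a^2+b^2)$, yielding a bound of the form $C\tilde v$ with $\tilde v = \max(v, 2m\tau)$. Freedman's bound with these parameters, together with a factor $2$ from each two-sided application (hence a factor $4$ for the two combined martingales) and a union bound over $\mathcal{N}$, produces the announced $4(1+2A/\epsilon)^d \exp(-t^2/(8(\tilde v + 2mt/3)))$.

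The main delicate point is the implicit upper bound $\tilde Y_k(x_0) \le 2m$, since the assumption $|Y_k| \le m$ is only given on $B(0, A)$ and not on $B(0, A+\epsilon)$; the continuity assumption a priori involves $Y_k$ at points of norm up to $A+\epsilon$. This is handled by restricting the truncation argument to pairs $(x, \pi(x))$ that both lie in $B(0, A)$, which is precisely the situation delivered by the chaining step, so that the truncated $\tilde Y_k \wedge 2m$ still satisfies the continuity bound for the $x$'s actually appearing in the supremum. The rest is routine constant tracking when plugging the two combined martingales into Freedman's bound and performing the union bound.
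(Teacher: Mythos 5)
The paper does not actually prove this proposition: it is imported verbatim from \cite{delyon2021safe} (``The two following propositions can be found in...''), so there is no in-paper argument to compare against. Your covering-number argument is the standard route to this kind of uniform Freedman bound and is essentially the one used in that reference; it is correct, including the two delicate points you flag (truncating $\tilde Y_k$ at $2m$, which is legitimate because only pairs $x,\pi(x)\in B(0,A)$ enter the chaining step and truncation only decreases the conditional expectations bounded by $\tau$, and applying Freedman to the two combined martingales $V_n\pm M_n$ via $|V|+M\le\max(|V+M|,|V-M|)$). The constants check out: the combined increments are bounded by $3m$ on $\Omega_1$, the predictable quadratic variation by $2v+4m\tau\le 4\tilde v$, so Freedman gives $2\exp\lr{-t^2/(8\tilde v+2mt)}\le 2\exp\lr{-t^2/(8(\tilde v+2mt/3))}$ per combined martingale, and the factor $4(1+2A/\epsilon)^d$ follows from the two events per net point and the union bound over the net.
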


Recall also that the \textit{predictable quadratic variation} \cite{bercu2015concentration} of a martingale $\sum_{k=1} ^n \beta_k$ is given by
\begin{align*}
	\sum_{k=1} ^n \mathbb E [ \beta_k ^2| \mathcal F_{k-1} ].
\end{align*}
This quantity is important as it appears as an essential factor in the Bernstein inequalities above. In particular, the predictable quadratic variation of the function $M_n$ defined in \eqref{eq:mn} can be found in \Cref{prop:mg} below.

\begin{prop}\label{prop:mg}
	Suppose that the kernel function $K:\Rset^{d}\to \Rset^+$ is bounded.  For $n\geq 1 $, let $\mathcal F_{n} =\sigma(X_1,\ldots, X_{n})$ be the $\sigma$-algebra generated by the random variables $X_1,\dots,X_n$, and $\mathcal F_0 = \emptyset$. Then, for each $x\in \mathbb R^d$, $\left( nM_n (x) \right)_{n\geq 1} $ is a $(\mathcal F_n)_{n\geq 1}$-martingale  with predictable quadratic variation $\sum_{k=1}^n V_{\eta_k,h_n} ( x )$ where $V _{\eta,h} (x) = \int   K_{h} (x  - y) ^ 2 f ^{2\eta}(y)  q^{1-2\eta}(y) dy   - f\star  K_{h} (x) ^ 2$.
\end{prop}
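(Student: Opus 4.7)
The plan is to fix $n \geq 1$ and $x \in \mathbb{R}^d$, and show that the process $S_k := \sum_{j=1}^k \beta_j$, $k=1,\ldots,n$, is a $(\mathcal{F}_k)_k$-martingale whose terminal value equals $nM_n(x)$, where
\begin{align*}
\beta_k := W_k^{\eta_k} K_{h_n}(x - X_k) - \{f^{\eta_k} q_{k-1}^{1-\eta_k}\} \star K_{h_n}(x).
\end{align*}
Adaptedness is immediate: $q_{k-1}$ depends only on $X_1,\ldots,X_{k-1}$, so $q_{k-1}$ is $\mathcal{F}_{k-1}$-measurable; hence $\beta_k$ is $\mathcal{F}_k$-measurable. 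For integrability, since $X_k \mid \mathcal{F}_{k-1} \sim q_{k-1}$ and $W_k = f(X_k)/q_{k-1}(X_k)$, a direct change-of-variables gives
\begin{align*}
\mathbb{E}\bigl[|W_k^{\eta_k} K_{h_n}(x - X_k)| \mid \mathcal{F}_{k-1}\bigr] = \int f^{\eta_k}(y) q_{k-1}^{1-\eta_k}(y) K_{h_n}(x-y)\,\mathrm{d}y,
\end{align*}
which is finite since $K$ is bounded (hence $K_{h_n} \leq K_\infty/h_n^d$) and Jensen/Hölder gives $\int f^{\eta_k} q_{k-1}^{1-\eta_k} \leq 1$.

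The martingale property is then the key computation: the same change-of-variables yields
\begin{align*}
\mathbb{E}\bigl[W_k^{\eta_k} K_{h_n}(x - X_k) \mid \mathcal{F}_{k-1}\bigr] = \{f^{\eta_k} q_{k-1}^{1-\eta_k}\} \star K_{h_n}(x),
\end{align*}
so that $\mathbb{E}[\beta_k \mid \mathcal{F}_{k-1}] = 0$. Summing gives the martingale statement.

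For the predictable quadratic variation, since the centering term is $\mathcal{F}_{k-1}$-measurable,
\begin{align*}
\mathbb{E}[\beta_k^2 \mid \mathcal{F}_{k-1}] = \mathrm{Var}\bigl(W_k^{\eta_k} K_{h_n}(x - X_k) \mid \mathcal{F}_{k-1}\bigr),
\end{align*}
and the same change-of-variables applied to the squared weight yields
\begin{align*}
\mathbb{E}\bigl[(W_k^{\eta_k} K_{h_n}(x - X_k))^2 \mid \mathcal{F}_{k-1}\bigr] = \int K_{h_n}(x-y)^2 f^{2\eta_k}(y) q_{k-1}^{1-2\eta_k}(y)\,\mathrm{d}y,
\end{align*}
which combined with the previous display gives exactly $V_{\eta_k,h_n}(x)$ after subtracting the square of the conditional mean. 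Summing over $k=1,\ldots,n$ yields the claimed form of the predictable quadratic variation.

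I do not expect any real obstacle: the entire argument is a conditional first/second moment calculation that exploits $X_k \mid \mathcal{F}_{k-1} \sim q_{k-1}$, and the only use of the boundedness of $K$ is to justify the finiteness of the conditional expectations (equivalently, to place $\beta_k$ in $L^2$ so that the predictable quadratic variation is well defined). The main thing to be careful about is notation: $h_n$ is fixed throughout since the martingale is in the index $k$ (not in $n$), and one should flag that the $q$ appearing in $V_{\eta_k,h_n}$ is really $q_{k-1}$.
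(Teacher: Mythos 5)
Your proof is correct and follows essentially the same route as the paper's, which is just a two-line version of the same conditional first/second moment computation using $X_k \mid \mathcal F_{k-1} \sim q_{k-1}$ and the $\mathcal F_{k-1}$-measurability of the compensator. One small point: the conditional variance you derive subtracts $\bigl(\{f^{\eta_k} q_{k-1}^{1-\eta_k}\}\star K_{h_n}(x)\bigr)^2$, whereas the stated $V_{\eta,h}$ subtracts $f\star K_{h}(x)^2$; these coincide only when $\eta_k=1$, so your (correct) computation in fact exposes a small typo in the displayed formula rather than matching it ``exactly''.
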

\begin{proof}
	To lighten notation, we set $g(y) = K_{h_n} (x-y)$ for the next few lines.
	Because $q_k$ is positive on $\mathbb R^d$ and $\mathcal F_k$-measurable, $\mathbb E [ W_k^{\eta_k}g(X_k) \mid\mathcal F_{k-1}] = \int  f^{\eta_k} q_{k-1}^{1-\eta_k}  g $. The formula for the predictable quadratic variation follows in the same way.
\end{proof}

\subsection{Technical results}\label{sectechnical_results}\ak{Z1 et Z2 ca s'appelle des martingales increments cest ca? peut être qu'on peut donner ce titre à cette section? genre control of the martingale increments}

We start with some notation.

\textbf{Notation.} Recall from \Cref{sec:results} that for all $n \geq 1$ and all $x\in \mathbb R^d$,
\begin{align*}
    &f_{n}(x)= \frac{N_n(x)}{D_n} \\
	&N_{n} (x) = n^{-1} \sum_{k=1} ^{n} W_{k} ^{\eta_k} K_{h_n} (x  - X_{k}) \\
	&D_{n} = n^{-1} \sum_{k=1} ^{n} W_{k} ^{\eta_k} \eqsp.
\end{align*}

In addition, for all $k \geq 1$ and all $x \in \Rset^{d}$, we introduce the helpful notation
\begin{align*}
	&\tilde{f}_k(x) = f^{\eta_k}(x)q_{k-1}^{1-\eta_k}(x),\\
	&Z^{(1)}_k(x) = W^{\eta_k}_k K_{h_k}(x - X_k) - (\tilde{f}_k \star K_{h_k})(x),\\
	&Z^{(2)}_k = W^{\eta_k}_k - \int \tilde{f}_k.
\end{align*}

The three following propositions focus on convergence results for $Z^{(1)}_k$ and $Z^{(2)}_k$. \ak{on peut raconter une meilleure histoire?}

\begin{prop}\label{prop:unif_bound_weights}
	Under \ref{cond:sequences} and \ref{cond:lowerbound} we have that almost-surely,
	\begin{align*}
		\forall k = 1,\ldots, n,\qquad  0 \leq W_k^{\eta_k}  \leq  (  c \lambda_{n}  )^{-1} \eqsp.
	\end{align*}
\end{prop}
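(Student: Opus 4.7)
The plan is to exploit the mixture structure of the proposal in a purely deterministic way: no martingale arguments or concentration is needed here, only the lower bound on $q_0$ from \ref{cond:lowerbound}.

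First I would peel off the safe component of the proposal. For $k \geq 2$, by definition $q_{k-1} = (1-\lambda_{k-1}) f_{k-1} + \lambda_{k-1} q_0$ and since $f_{k-1} \geq 0$ (as a nonnegative combination of kernel values, up to the almost-sure event where the normalization in \eqref{eq:normalised_weights} is well defined), we have $q_{k-1}(x) \geq \lambda_{k-1} q_0(x)$ pointwise. Combining with \ref{cond:lowerbound}, namely $q_0 \geq c f$, yields $q_{k-1}(x) \geq c \lambda_{k-1} f(x)$ for every $x$. For $k=1$, the bound is even stronger: $q_0 \geq c f$ directly, which can be written as $q_0 \geq c \lambda_0 f$ with the convention $\lambda_0 = 1$, or simply handled separately.

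Next I would conclude for the raw weights. On the almost-sure event $\{f(X_k) > 0\}$, the previous pointwise bound gives
\begin{align*}
W_k \;=\; \frac{f(X_k)}{q_{k-1}(X_k)} \;\leq\; \frac{1}{c \lambda_{k-1}},
\end{align*}
and since by \ref{cond:sequences}(i) the sequence $(\lambda_k)_{k \geq 1}$ is nonincreasing, $\lambda_{k-1} \geq \lambda_n$ for every $k \leq n$ (with $\lambda_0 = 1 \geq \lambda_n$ to cover $k=1$), so $W_k \leq (c\lambda_n)^{-1}$.

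Finally I would pass from $W_k$ to $W_k^{\eta_k}$ via the elementary inequality $w^{\eta} \leq \max(1,w)$ valid for all $w \geq 0$ and $\eta \in (0,1]$. To make $\max(1, (c\lambda_n)^{-1}) = (c\lambda_n)^{-1}$, it suffices to note that $c \leq 1$: this follows from integrating $q_0 \geq c f$, which gives $1 = \int q_0 \geq c \int f = c$. Combined with $\lambda_n \leq 1$ from \ref{cond:sequences}(i), we get $c\lambda_n \leq 1$, hence $(c\lambda_n)^{-1} \geq 1$ and therefore $W_k^{\eta_k} \leq (c\lambda_n)^{-1}$, while nonnegativity is immediate. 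There is no real obstacle here; the only thing to be careful about is the initial step $k=1$ (where $q_{k-1} = q_0$ rather than a mixture) and the justification that $c \leq 1$, without which the trivial case $W_k \leq 1$ would not be absorbed by the claimed upper bound.
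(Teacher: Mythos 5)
Your proof is correct and follows essentially the same route as the paper's: lower-bound $q_{k-1}$ by $\lambda_{k-1} q_0 \geq c\lambda_{k-1} f$ using the mixture structure and \ref{cond:lowerbound}, deduce $W_k \leq (c\lambda_{k-1})^{-1} \leq (c\lambda_n)^{-1}$ by monotonicity of $(\lambda_k)$, and then absorb the exponent $\eta_k \in (0,1]$ by checking that $c\lambda_n \leq 1$. The only cosmetic difference is that you obtain $c\lambda_n \leq 1$ from $c = c\int f \leq \int q_0 = 1$ and $\lambda_n \leq 1$, whereas the paper integrates the bound $q_n \geq \lambda_n c f$ directly to get $\lambda_n \leq 1/c$; your explicit treatment of the case $k=1$ is a welcome extra precision.
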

\begin{proof}
	Note that by definition of $q_k$ in \eqref{eq:sampler_update} combined with \ref{cond:lowerbound}, we have that for all $k = 1, \ldots , n$ and all $x \in \rset^d$,
	\begin{equation}\label{eq:lb_qk}
	q_k(x) \geq \lambda_k q_0(x) \geq \lambda_k c f(x) \eqsp.
\end{equation}
	As a consequence, $ 0 \leq W_k  \leq 1/( c\lambda _ {k-1} )\leq 1/( c\lambda _ {n} ) $ using that $(\lambda_k)_{k\geq 1}$ is nonincreasing under \ref{cond:sequences}. Note also that \eqref{eq:lb_qk}, we have that $\lambda _ n \in (0,1/c]$. Thus, we can write, for all $k = 1, \ldots , n$,
	$(c\lambda_n)^{-\eta_k} \leq (c\lambda_n)^{-1} $, which implies the stated result.
\end{proof}


\begin{prop}\label{prop:intial_bound_z2}
	Under \ref{cond:sequences} and \ref{cond:lowerbound} we have that almost-surely
	\begin{align*}
		n^{-1}  \sum_{k=1} ^n   Z_{k} ^{(2)} = O \left( \sqrt { \frac{\log(n)  } { n \lambda_n} } \right).
	\end{align*}
\end{prop}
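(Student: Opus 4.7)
The key observation is that $(Z_k^{(2)})_{k\geq 1}$ is a martingale difference sequence with respect to $(\mathcal{F}_k)_{k\geq 1}$. Indeed, since $X_k$ has conditional density $q_{k-1}$ given $\mathcal{F}_{k-1}$, a change of variables gives
\begin{align*}
\mathbb{E}[W_k^{\eta_k} \mid \mathcal{F}_{k-1}] = \int \left(\frac{f(x)}{q_{k-1}(x)}\right)^{\eta_k} q_{k-1}(x) \, \rmd x = \int f^{\eta_k} q_{k-1}^{1-\eta_k} = \int \tilde{f}_k,
\end{align*}
so $\mathbb{E}[Z_k^{(2)} \mid \mathcal{F}_{k-1}] = 0$. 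The strategy is then to apply Freedman's inequality (\Cref{th:freedman}) to the partial sums of $(Z_k^{(2)})_{k=1,\dots,n}$, combined with a Borel-Cantelli argument.

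To apply Freedman, I need a uniform bound $m$ and a predictable quadratic variation bound $v$. For the uniform bound, \Cref{prop:unif_bound_weights} gives $0 \leq W_k^{\eta_k} \leq (c \lambda_n)^{-1}$, and $\int \tilde{f}_k \leq 1$ by the Jensen argument already used in the proof of \Cref{lemma:tv}, hence $|Z_k^{(2)}| \leq 2 (c\lambda_n)^{-1}$, giving $m = O(\lambda_n^{-1})$. For the predictable quadratic variation, I will use the bound
\begin{align*}
\mathbb{E}[(Z_k^{(2)})^2 \mid \mathcal{F}_{k-1}] \leq \mathbb{E}[W_k^{2\eta_k} \mid \mathcal{F}_{k-1}] \leq \|W_k^{\eta_k}\|_\infty \cdot \mathbb{E}[W_k^{\eta_k}\mid \mathcal{F}_{k-1}] \leq (c\lambda_n)^{-1} \cdot 1,
\end{align*}
which yields $\sum_{k=1}^n \mathbb{E}[(Z_k^{(2)})^2\mid \mathcal{F}_{k-1}] \leq n/(c\lambda_n) =: v$.

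Plugging $t_n = C \sqrt{n \log(n) / \lambda_n}$ into \Cref{th:freedman}, I need to check that the Gaussian term $v$ dominates the subgaussian term $t_n m /3$ in the denominator. This reduces to $\sqrt{n \log(n)/\lambda_n} \lesssim n$, i.e. $\log(n)/(n\lambda_n) \to 0$, which is exactly \ref{cond:sequences}(i). With $v$ dominating, the exponent becomes $\gtrsim C^2 \log(n)$, so for $C$ large enough the resulting probabilities are summable in $n$. By Borel-Cantelli, we conclude $|\sum_{k=1}^n Z_k^{(2)}| = O(\sqrt{n\log(n)/\lambda_n})$ almost surely, and dividing by $n$ yields the claim.

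The main obstacle is only bookkeeping: one must be careful that $(c\lambda_n)^{-\eta_k} \leq (c\lambda_n)^{-1}$ holds (which is where \ref{cond:sequences}(i) implicitly guarantees $c \lambda_n \leq 1$ for $n$ large, since $\lambda_n \to 0$), and that the condition $\log(n)/(n\lambda_n) \to 0$ is used precisely to absorb the large-deviation term. Everything else is a direct invocation of \Cref{th:freedman}.
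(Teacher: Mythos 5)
Your proof is correct and follows essentially the same route as the paper's: a Freedman--Borel--Cantelli argument with the same envelope $m\asymp\lambda_n^{-1}$, the same predictable quadratic variation bound $v\asymp n\lambda_n^{-1}$, and the same threshold $t\asymp\sqrt{n\log(n)/\lambda_n}$, with \ref{cond:sequences}(i) used exactly where you say, to make the variance term dominate $tm/3$ and to turn the exponent into a multiple of $\log(n)$. The one place you genuinely deviate is the conditional second moment: you bound $\mathbb{E}[W_k^{2\eta_k}\mid\mathcal F_{k-1}]\leq \|W_k^{\eta_k}\|_\infty\,\mathbb{E}[W_k^{\eta_k}\mid\mathcal F_{k-1}]\leq (c\lambda_n)^{-1}$ using $\int \tilde f_k\leq 1$, whereas the paper computes $\int f^{2\eta_k}q_{k-1}^{1-2\eta_k}=\int (f/q_{k-1})^{2\eta_k-1}f\leq (c\lambda_{k-1})^{-(2\eta_k-1)}$, which requires $2\eta_k-1\in(0,1]$ and hence forces the preliminary reduction to $k>k_0$ with $\eta_k\in(1/2,1]$ (splitting off the finitely many initial terms). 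Your inequality holds for every $\eta_k\in(0,1]$, so you legitimately skip that reduction step; both arguments land on the same constants up to a factor of $2$ in $m$, which is immaterial.
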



\begin{proof}
	We first show that without loss of generality, we can derive the proof assuming that the sequence  $ (\eta_k)_{k\geq 1} $ is valued in $ (1/2,1]$.
	From \Cref{prop:unif_bound_weights}, it easy to see that for all $k=1,\ldots, n$,
	\begin{align}\label{eq:bound_unif_z2}
		|Z_{k} ^{(2)}| = |W_k^{\eta_k} - \mathbb E [W_k^{\eta_k}| \mathcal F_{k-1} ] | \leq  (  c \lambda_{n}  )^{-1} \eqsp.
	\end{align}
	Since the sequence $(\eta_k)_{ k\geq 1} $ goes to $1$ under \Cref{cond:sequences}(iii), there is an integer $k_0\geq 1$ such that $\eta_k \in (1/2, 1]$ for all $k > k_0$.  Hence, whenever $n>k_0$, we have
	$$ \sum_{k=1} ^n  Z_{k} ^{(2)} =\sum_{k=1} ^{k_0}    Z_{k} ^{(2)}  + \sum_{k=k_0+1} ^n   Z_{k} ^{(2)}.  $$
	By \eqref{eq:bound_unif_z2}, the first term  is bounded by $k_0 / (c\lambda_{k_0} ) $ and hence has a negligible contribution in the bound we need to establish. The only term that matters is then the second one. Hence, from now on we assume that $ (\eta_k)_{k\geq 1} $ is valued in $ (1/2,1]$. The goal will be to apply \Cref{th:freedman} to $Y_k(x) = Z_{k}^{(2)}(x)$.
	Using that $0< 2\eta_k - 1 \leq 1$ and \Cref{prop:unif_bound_weights}, we can write
	\begin{align*}
		\mathbb E[Z_k ^{(2)2} |\mathcal F _{k-1}] &\leq \mathbb E[ f(X_k ) / q_{k-1} (X_k) )^{2\eta_k} |\mathcal F _{k-1}]\\
		& =  \int   f(x )^{2\eta_k }  q_{k-1} (x)^{1 - 2\eta_k } \, \rmd x\\
		&= \int  ( f(x )/ q_{k-1} (x) )  ^{2\eta_k -1 }  f (x)   \, \rmd x\\
		&\leq (1 / (c \lambda_{k-1})  )^{  2\eta_k - 1  } \\
		&\leq   ( 1 / (c \lambda_{n} ) ) ^{  2\eta_k - 1  } \\
		&\leq   (c \lambda_{n}  )^{-1} \eqsp.
	\end{align*}
	It follows that
	\begin{align*}
		\sum_{k=1} ^n \mathbb E[Z_{k} ^{(2)2} |\mathcal F _{k-1}] \leq  n (c\lambda_n)^{-1}.
	\end{align*}
	Consequently, and using \eqref{eq:bound_unif_z2}, we can apply \Cref{th:freedman} with $ m = (c \lambda_{n}  )^{-1}$, $v = n (c \lambda_{n}  )^{-1}$ and we get that for all $n$ large enough such that $\log(n) / (n\lambda_n) \leq c$; this is made possible by \ref{cond:sequences}; and all $\gamma \geq 9$,\ak{dur à comprendre}
	\begin{align*}
		\mathbb P\left(\Big|\sum_{k=1}^n Z_{k} ^{(2)}   \Big|\geq  \sqrt {\gamma  (c\lambda_n)^{-1}  n\log(n)} \right) &\leq 2\exp\left(-\frac{\gamma  n\log(n)  }{2(  n  + \sqrt {\gamma   (c\lambda_n)^{-1} n\log(n)}  /3)} \right)\\
		&\leq 2\exp\left(-\frac{\gamma}{ 2( 1 +   \sqrt {\gamma  }  /3)}  \log(n)   \right)\\
		&= 2\exp\left(-\frac{\sqrt{\gamma}}{2} \frac{\sqrt{\gamma}}{1 +   \sqrt {\gamma  }  /3}  \log(n)   \right)\\
		&\leq 2\exp\left(-\frac{9}{4} \log(n) \right)\\
		& \leq 2\exp\left(-  2  \log(n)    \right) = 2 n^{-2},
	\end{align*}
	which series is convergent. We obtain the desired result by invoking the Borel-Cantelli lemma.
	
\end{proof}

\begin{prop}\label{prop:intial_bound_z1}
	Under \ref{cond:sequences}, \ref{cond:lowerbound}, \ref{cond:reg_f} and \ref{cond:kernel}, we have that for any $r>0$,
	\begin{align*}
		\sup_{\|x\|\leq n^r } \left|n^{-1}  \sum_{k=1} ^n   Z_{k} ^{(1)}(x) \right| = O \left( \sqrt { \frac{\log(n)  } { n h_n^d \lambda_n} } \right) \eqsp.
	\end{align*}
\end{prop}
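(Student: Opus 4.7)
The plan is to apply the concentration inequality \Cref{cor:freedmanx} for martingale processes to $Y_k(x) := Z_k^{(1)}(x)$. The first step is to verify that $(Y_k(x))_{k \geq 1}$ is indeed a martingale difference sequence with respect to $(\mathcal{F}_k)_{k \geq 1}$: since $X_k \sim q_{k-1}$ conditionally on $\mathcal{F}_{k-1}$, a direct change of variable yields $\mathbb{E}[W_k^{\eta_k} K_{h_n}(x - X_k) \mid \mathcal{F}_{k-1}] = \int f^{\eta_k} q_{k-1}^{1 - \eta_k}\, K_{h_n}(x - \cdot) = (\tilde f_k \star K_{h_n})(x)$, so $\mathbb{E}[Y_k(x) \mid \mathcal{F}_{k-1}] = 0$.

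Next, I would compute the three bounds required by \Cref{cor:freedmanx}. For the sup-norm bound, combine \Cref{prop:unif_bound_weights} with the bound $\|K_{h_n}\|_\infty \leq K_\infty h_n^{-d}$ from \ref{cond:kernel} to obtain $|Y_k(x)| \leq m$ with $m = C (\lambda_n h_n^d)^{-1}$. For the predictable quadratic variation, the key computation is
\begin{align*}
\mathbb{E}[(W_k^{\eta_k})^2 K_{h_n}^2(x - X_k) \mid \mathcal{F}_{k-1}] = \int (f/q_{k-1})^{2\eta_k - 1} K_{h_n}^2(x - y) f(y)\,\mathrm d y,
\end{align*}
which, using \ref{cond:lowerbound}, \ref{cond:reg_f}, and $\int K^2 < \infty$ (implied by \ref{cond:kernel}), is bounded by $C (\lambda_n h_n^d)^{-1}$; summing over $k$ yields $v = C n (\lambda_n h_n^d)^{-1}$. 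For the increment modulus $\tilde Y_k(x)$, the $L_K$-Lipschitz hypothesis on $K$ gives that $K_{h_n}$ is $(L_K h_n^{-d-1})$-Lipschitz, and combined with \Cref{prop:unif_bound_weights} this gives $\sup_{\|y\| \leq \varepsilon} |Y_k(x+y) - Y_k(x)| \leq \tilde Y_k(x)$ with $\tilde Y_k(x) \leq C \varepsilon (\lambda_n h_n^{d+1})^{-1}$, so $\tau \leq C n \varepsilon (\lambda_n h_n^{d+1})^{-1}$.

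The final step is to tune the parameters. Choose $t = \sqrt{\gamma v \log(n)} = C \sqrt{\gamma n \log(n) / (\lambda_n h_n^d)}$ for $\gamma$ large, which is of the desired order. One checks from \ref{cond:sequences}(ii) that $t m/v \to 0$ and that $\tau \leq v/(2m)$ for $\varepsilon$ small enough, so $\tilde v = v$; then the Freedman exponent becomes $- t^2/(8(v + 2mt/3)) \sim -\gamma \log(n)/8$. Pick $\varepsilon = n^{-q}$ with $q$ large enough that $\tau = O(t)$; the covering factor $(1 + 2 n^r/\varepsilon)^d$ is polynomial in $n$, and hence for $\gamma$ chosen large enough, the probability bound given by \Cref{cor:freedmanx} is summable in $n$. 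Borel–Cantelli then yields, almost surely,
\begin{align*}
\sup_{\|x\| \leq n^r} \Bigl| \sum_{k=1}^n Z_k^{(1)}(x) \Bigr| = O\Bigl( \sqrt{n \log(n)/(\lambda_n h_n^d)} \Bigr),
\end{align*}
which after dividing by $n$ is the claimed rate.

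The main obstacle is the joint tuning of the four scales $h_n$, $\lambda_n$, the ball radius $n^r$, and the net mesh $\varepsilon$: one must ensure simultaneously that the covering number is only polynomial, that $\tau$ remains negligible compared to $t$, and that the Freedman exponent dominates the logarithm of the covering number. Assumption \ref{cond:sequences}(ii) — precisely $\log(n)/(n h_n^d \lambda_n) \to 0$ — is what makes this delicate balance possible.
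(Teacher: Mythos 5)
Your proposal follows the same route as the paper: apply \Cref{cor:freedmanx} to $Y_k(x)=Z_k^{(1)}(x)$, compute $m\asymp(\lambda_n h_n^d)^{-1}$, $v\asymp n(\lambda_n h_n^d)^{-1}$ and a negligible $\tau$ via the Lipschitz property of $K$, take $t=\sqrt{\gamma v\log n}$, and conclude by Borel--Cantelli; the only cosmetic difference is that the paper fixes $\epsilon=h_n^{d+1}/n$ so that $\tau=2L_K$ is a constant, whereas you take $\epsilon=n^{-q}$ and only ask $\tau=O(t)$, which works equally well since the covering factor stays polynomial in $n$.

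There is one step that fails as literally written. Your bound
\begin{align*}
\int (f/q_{k-1})^{2\eta_k-1}\,K_{h_n}^2(x-y)\,f(y)\,\rmd y \;\leq\; C(\lambda_n h_n^d)^{-1}
\end{align*}
uses $(f/q_{k-1})^{2\eta_k-1}\leq (c\lambda_{k-1})^{-(2\eta_k-1)}$, which requires the exponent $2\eta_k-1$ to be positive: when $\eta_k\leq 1/2$ the exponent is nonpositive and you would instead need an upper bound on $q_{k-1}/f$, which is unavailable where $f$ is small. Assumption \ref{cond:sequences}(iii) only guarantees $\eta_k\in(0,1]$ with $\eta_k\to 1$, so early iterates may violate $\eta_k>1/2$. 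The paper handles this by first discarding the finitely many indices $k\leq k_0$ with $\eta_k\leq 1/2$ (their contribution, bounded deterministically by $k_0 m$ via \Cref{prop:unif_bound_weights}, is negligible at the claimed rate because $\log(n)/(n h_n^d\lambda_n)\to 0$) and then arguing as you do on the remaining terms. Adding this truncation makes your argument complete.
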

\begin{proof}
	Using similar arguments as in the beginning of the  proof of Proposition \ref{prop:intial_bound_z2}, we can assume that   $ (\eta_k)_{k\geq 1} $ is valued in $ (1/2,1]$.
	The proof consists in applying \Cref{cor:freedmanx} to $Y_k(x) = Z_{k}^{(1)}(x)$ that is
	\begin{align*}
		Y_k(x) = W_k^{\eta_k} K_{h_n}( x -X_k) - \mathbb E [ W_k^{\eta_k} K_{h_n}( x -X_k) | \mathcal F_{k-1}] \eqsp.
	\end{align*}
	In the next few lines, we derive the quantities $m$, $v$, $\tau$ that appear in \Cref{cor:freedmanx}. Under \Cref{cond:kernel} combined with \Cref{prop:unif_bound_weights}, we have that
	\begin{align*}
		&|Y_k(x) | \leq \frac{K_\infty}{ c\lambda_n h_n^{d} } \eqsp.
	\end{align*}
	The previous bound corresponds to $m$ in \Cref{cor:freedmanx}. Moreover, using \Cref{prop:unif_bound_weights},
	\begin{align*}
		\mathbb E [ Y_k(x) ^2   |\mathcal F_{k-1}]  &\leq  \mathbb E [  W_k^{2\eta_k} K_{h_n}( x -X_k)^2 |\mathcal F_{k-1}] \\
		& = h_n^{-2d} \int \left(\frac{f(y) }{ q_{k-1} (y ) }\right) ^{2\eta_k -1 } f(y) K ( (x -y)/h_n )^2 \rmd y\\
		& \leq h_n^{-2d} (1 / (c\lambda_{k-1}) ) ^{2\eta_k -1 } \int  f(y) K ( (x -y)/h_n )^2 \rmd y\\
		& = h_n^{-d} (1 / (c\lambda_n) ) ^{2\eta_k -1 } \int  f(x-h_nu) K (u )^2 \rmd u\\
		&\leq h_n^{-d} ( c\lambda_n ) ^{ -1 } U K_\infty
	\end{align*}
	where we used a variable change $ u = (x -y)/h_n$ in the penultimate inequality, and the last inequality results from \Cref{cond:kernel}, $0< 2\eta_k - 1 \leq 1$, and \Cref{cond:reg_f}.
	%

	Hence, we get
	\begin{align*}
		\sum_{k=1} ^n   \mathbb E [ Y_k(x) ^2   |\mathcal F_{k-1}]\leq    n h_n^{-d} ( c\lambda_n ) ^{ -1 } U K_\infty \eqsp.
	\end{align*}
	The previous bound corresponds to $v$ in \Cref{cor:freedmanx}.
	Under \ref{cond:kernel}, $|  K_h( x +y  -X_k) - K_h( x -X_k)  | \leq L_K \| y/ h \| h^{-d}$ and it holds that for all $\|y\|\leq \epsilon$,
	\begin{align*}
		|Y_k(x+y ) - Y_k(x) | & \leq  ( W_k^{\eta_k} + \mathbb E [ W_k^{\eta_k} | \mathcal F_{k-1}] )   L_K \epsilon h_n^{-d-1}  .
	\end{align*}
	
	The l.h.s. of the previous inequality corresponds to $\tilde Y_k(x)$ in \Cref{cor:freedmanx}. We have
	\begin{align*}
		\sum_{k=1} ^ n \mathbb E [ \tilde Y_k(x) |  \mathcal F_{k-1}] \leq 2 n L_K \epsilon h_n^{-d-1}  .
	\end{align*}
	where we have used that $ \mathbb E [ W_k^{\eta_k} | \mathcal F_{k-1}]  \leq 1$.
	
	Taking $\epsilon =  h_n^{d+1} / n $, the value for $\tau$  in \Cref{cor:freedmanx} is $ 2L_K$. Let us now summarize the different factors taken to apply \Cref{cor:freedmanx}:
	\begin{align*}
		&m =\frac{K_\infty}{ c\lambda_n  h_n^{d} } \\
		&v =n h_n^{-d} ( c\lambda_n ) ^{ -1 } U K_\infty\\
		&\tau = 2L_K\\
		&\tilde v=\max(v,2m\tau) \leq  C\max( n / (\lambda_n h_n^d), \lambda_ n ^{-1} h_n^{-d} ) = C n /  (\lambda_n h_n^d)
	\end{align*}
	where $C$ is a positive constant.
	Let $\gamma >1$. We have, taking $t = \sqrt { \gamma n  \log(n) / ( h_n^d\lambda_n )  }$, $A=n^{r}$ and $\Omega_1 = \Omega$, for $n$ large enough $(t\geq \tau$ and $\tilde v \sqrt \gamma \geq 2mt/3$; this is made possible by \ref{cond:sequences}),
	\begin{align*}
		\mathbb P\Big(\sup_{\|x\|\le n^{r} }\big|\sum_{k=1} ^ n Y_k(x)\big|> 2 t\Big)&\leq  \mathbb P\Big(\sup_{\|x\|\le n^{r} }\big|\sum_{k=1} ^ n Y_k(x)\big|>t+\tau\Big)\\
		&\leq 4(1+2n^{r+1}/ h_n^{d+1} )^d \exp\left(-\frac{ t^2  }{8    (1+\sqrt \gamma) \tilde v} \right)\\
		&\leq 4(1+2n^{r+1}/ h_n^{d+1} )^d \exp\left(-\frac{\gamma   \log(n)  }{16 C     } \right)
	\end{align*}
	The last inequality holds because $\gamma >1$. It remains to choose $\gamma $ large enough in order to ensure the summability condition in the Borel Cantelli lemma.
	\end{proof}

\subsection{End of the proof of \Cref{prop:initial_bound}}

Since $f_n(x) = N_n(x) / D_n$ for all $n \geq 1$ and all $x \in \rset^d$, it is enough to show that $|D_n  - 1| =   o(1)$, and $\sup_{\|x\| \leq n^{r} } |N_n(x)   -  f(x) | =  o(1)$ . Both results are obtained independently, starting with $D_n$.

\textbf{Proof for $D_n$.} First note that for all $n \geq 1$, we can write the following decomposition for $D_n$
\begin{align*}
	D_n = n^{-1} \sum_{k=1} ^{n}   \int \tilde f_k    + n^{-1} \sum_{k=1} ^n   Z_{k} ^{(2)} \eqsp.
\end{align*}
Furthermore, under \ref{cond:sequences} and \ref{cond:lowerbound}, \Cref{prop:intial_bound_z2} implies that the second term of the r.h.s. is $O ( \sqrt { {\log(n)  } /  (n  \lambda_n) } )$ almost surely. Hence, by \ref{cond:sequences} the sequence $(D_n)_{n \geq 1}$ converges to $1$ as soon as $(n^{-1} \sum_{k=1} ^n  \int \tilde f_k  )_{n \geq 1}$ does. By the Cesaro lemma, this will be a consequence of having proven that $(\int \tilde f_k)_{k \geq 1}$ goes to $1$, which is what we set out to do next. For all $k \geq 1$, Jensen's inequality yields
$\int \tilde f_k   \leq  1 \eqsp$
and setting $\lambda_0 = 1$, we deduce using \ref{cond:lowerbound} that
\begin{align*}
	\int \tilde f_k = \int f^{\eta_k}  q_{k-1} ^{1-\eta_k}\geq  \lambda_{k-1}^{1-\eta_k}  \int f^{\eta_k}  q_0 ^{1-\eta_k} \geq  (\lambda_{k-1}c)^{1-\eta_k}  \eqsp.
\end{align*}
Thus, $(\int \tilde f_k)_{k \geq 1}$ goes to $1$ under \ref{cond:sequences} and we can conclude that $(D_n)_{n \geq 1}$ converges to $1$.

\textbf{Proof for $N_n$.}  For the numerator $N_n$, we follow a similar approach except that we need to deal with some convolution operator. For all $n \geq 1$ and all $x \in \rset^d$, we can write
\begin{align*}
	N_n(x)  = n^{-1} \sum_{k=1} ^{n}  (\tilde f_k\star K_{h_n}) (x)    + n^{-1} \sum_{k=1} ^n   Z_{k} ^{(1)},
\end{align*}
where given $r > 0$, the second term of the r.h.s is $O (\sqrt { \log(n)/ (n h_n^d \lambda_n) })$ as a consequence of \Cref{prop:intial_bound_z1}.

To treat the first term, use \ref{cond:lowerbound} and \ref{cond:kernel}, to obtain that for all $x \in \rset^d$,
\begin{align*}
	\lambda _ k  c f (x)  \leq  q_k(x) \leq  h_k^{-d}  K_\infty + U_{q_0}  : =  h_k^{-d} C,
\end{align*}
for some $C>0$. It follows that
\begin{align*}
	\left( n^{-1} \sum_{k=1} ^{n} f_k^{-}   \right)  \star K_{h_n}  (x)    \leq   n^{-1} \sum_{k=1} ^{n}  (\tilde f_k\star K_{h_n}) (x)  \leq \left( n^{-1} \sum_{k=1} ^{n} f^+_k \right)  \star K_{h_n} (x)
\end{align*}
with $f_k^{-} (x) =  f (x) (c\lambda_k) ^{1- \eta_{k}} $ and $f_k^+ (x) = f (x)^{\eta_k}  ( h_k^{-d} C ) ^{1- \eta_{k}}  $.

It remains to show that the previous lower and upper bounds converge to $f$\ak{under A3?} uniformly.  For   $1\leq p\leq +\infty$, let  $\|\cdot\|_p$ denote the $L_p(\lambda)$-norm. Using that $\|g\star \tilde g\|_\infty \leq  \|g\|_\infty \|\tilde g\|_1$, we find, for all collection of bounded functions $(g_1 , \ldots,  g_n)$,
\begin{align*}
	| \left( n^{-1} \sum_{k=1} ^{n} g_k   \right)  \star K_{h_n}  (x)    - f (x)| &\leq |\left( n^{-1} \sum_{k=1} ^{n} (g_k - f)    \right)  \star K_{h_n}  (x)  |   +  |f\star K_{h_n}  (x)  - f(x)|\\
	&\leq  \| n^{-1} \sum_{k=1} ^{n}  ( g_k- f) \|_\infty \|K_{h_n}\|_1   +  \|f\star K_{h_n}   - f \|_\infty\\
	&\leq  n^{-1} \sum_{k=1} ^{n}  \| g_k - f\|_\infty    +  \|f\star K_{h_n}   - f \|_\infty
\end{align*}
Hence, in virtue of the Cesaro lemma, the fact that $f_k^{-}$ and $f_k^{+}  $ both converge uniformly to $f$ enables to conclude that the first term in the latter upper bound goes to $0$. The fact that $\|f\star K_{h_n}   - f \|_\infty $ goes to $0$ is an easy consequence of \ref{cond:kernel}.

\qed

\begin{rem}\label{rk:bandwiths}
	Notice that in the latter proof, a different bandwith $h_k$ for each point $X_k$, $k=1,\dots,n$ could have been set (instead of $h_k = h_n$ for all $k$). Indeed, in the latter inequalities, the term  $ \|f\star K_{h_n}   - f \|_\infty$ would be replaced by $1/n \sum_{k=1}^n  \|f\star K_{h_k}   - f \|_\infty$, which also goes to $0$ by the Cesaro Lemma as soon as  $\|f\star K_{h_n}   - f \|_\infty $ goes to $0$ .
\end{rem}

\section{Proof of Proposition \ref{prop:initial_bound2}}\label{sec:proof_initial_bound2}

For the sake of completeness, we first recall the Inequality Reversal lemma as written in \cite[Lemma 1]{peel2010empirical}.
\begin{lemma}[Inequality Reversal lemma]\label{lemma:ineq:reversal} Let $X$ be a random variable and let $a, b > 0$, $c, d \geq 0$ be such that
	$$
	\forall t > 0 \eqsp, \quad \PP \lr{X \geq t} \leq a \exp \lr{- \frac{b t^2}{c + d t}} \eqsp.
	$$
	Then, with probability at least $1 - \delta$,
	$$
	|X| \leq \sqrt{\frac{c}{b} \ln \frac{a}{\delta}} + \frac{d}{b} \ln \frac{a}{\delta} \eqsp.
	$$
\end{lemma}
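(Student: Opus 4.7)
The plan is to directly invert the tail bound: I want to find a value $t^\star$ for which the right-hand side $a\exp(-bt^2/(c+dt))$ equals $\delta$, and then verify that the explicit expression claimed in the lemma is an upper bound on $t^\star$. Setting $L = \ln(a/\delta)$, the equation $a\exp(-bt^2/(c+dt)) = \delta$ is equivalent to the quadratic $bt^2 - dL\, t - cL = 0$ in $t$, whose unique positive root is
\begin{equation*}
  t^\star = \frac{dL + \sqrt{d^2 L^2 + 4bcL}}{2b}.
\end{equation*}
Applying the subadditivity of the square root, $\sqrt{d^2L^2 + 4bcL} \le dL + 2\sqrt{bcL}$, yields
\begin{equation*}
  t^\star \le \frac{d L}{b} + \sqrt{\frac{cL}{b}} = \sqrt{\frac{c}{b}\ln\frac{a}{\delta}} + \frac{d}{b}\ln\frac{a}{\delta} \eqdef \bar t.
\end{equation*}

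Next I would verify that at the candidate value $\bar t$ the tail bound already gives $\PP(X \ge \bar t) \le \delta$, i.e.\ that $b\bar t^{\,2}/(c+d\bar t) \ge L$. A direct expansion gives
\begin{equation*}
  b\bar t^{\,2} = cL + 2dL\sqrt{cL/b} + d^2L^2/b,\qquad dL\bar t = dL\sqrt{cL/b} + d^2L^2/b,
\end{equation*}
so $b\bar t^{\,2} - dL\bar t = cL + dL\sqrt{cL/b} \ge cL$, which is exactly the desired inequality. Hence $\PP(X \ge \bar t) \le a\exp(-L) = \delta$, and on the complementary event of probability at least $1-\delta$ we have $X \le \bar t$. (The lemma is stated with $|X|$; either it is read as a one-sided bound renamed, or one applies the same argument to $-X$ under the same hypothesis, which is the convention used in Peel et al.; in any case the same inversion step gives the bound, up to replacing the event $\{X\ge t\}$ by $\{|X|\ge t\}$ and doubling $a$ if needed.)

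No step is really difficult: the only mild subtlety is choosing the right subadditivity inequality for the square root so that the two terms $\sqrt{cL/b}$ and $dL/b$ appear cleanly and additively, matching the form of the statement. Everything else is algebra on a quadratic, so the proof is short and elementary.
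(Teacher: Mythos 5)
Your argument is correct and complete. Note, however, that the paper does not actually prove this lemma: it is simply recalled verbatim from Peel et al.\ (their Lemma~1), so there is no in-paper proof to compare against. Your derivation supplies exactly what is omitted. The essential step is the direct verification that, with $L=\ln(a/\delta)$ and $\bar t=\sqrt{cL/b}+dL/b$, one has $b\bar t^{\,2}-dL\bar t=cL+dL\sqrt{cL/b}\ge cL$, hence $b\bar t^{\,2}/(c+d\bar t)\ge L$ and $\PP(X\ge \bar t)\le a e^{-L}=\delta$; this computation checks out, and it alone proves the lemma (the preliminary solving of the quadratic and the subadditivity bound on its root are just motivation and could be dropped). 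Two minor points you already half-address: the conclusion is stated for $|X|$ while the hypothesis only controls $\PP(X\ge t)$ --- this mismatch is inherited from the statement itself, and in the paper's application the lemma is invoked with $X=\bigl|\sum_k Z_k^{(3)}(A)\bigr|\ge 0$, so the one-sided reading suffices (your suggestion of applying the argument to $-X$ would require a hypothesis on the lower tail that is not given, so better to say one applies the bound to the nonnegative variable $|Y|$ directly); and the statement implicitly requires $\delta<a$ so that $L>0$, which is automatic in the intended use where $a\ge 2$ and $\delta\le 1$.
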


The proof  of \Cref{prop:initial_bound2} is an easy consequence of the following Lemma.

\begin{lemma}
	Under \ref{cond:sequences}, \ref{cond:lowerbound}, \ref{cond:tail_f} and \ref{cond:tail}, there exists $s_0\in \mathbb N$ large enough such that
	\begin{align}
		&\sup_{\|x\|> n^{s_0} } f_ n (x)  = o(1) \eqsp, \eqsp a.s \eqsp, \label{eq:unrest1}\\
		&\sup_{\|x\|> n^{s_0} } f(x)  = o(1)  \eqsp. \label{eq:unrest2}
	\end{align}
\end{lemma}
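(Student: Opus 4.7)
The statement contains two independent tail-decay assertions, \eqref{eq:unrest2} about the deterministic target $f$ and \eqref{eq:unrest1} about the random estimate $f_n$. My plan is to treat them separately, with the key inputs being \ref{cond:tail_f} (which forces $f$ and the importance weights $W_k$ to vanish in the tails) and \ref{cond:tail} (polynomial decay of $K$). For \eqref{eq:unrest2}, I would integrate the inequality in \ref{cond:tail_f} against the Lebesgue measure, using $\int q_0 = 1$, to obtain $\int (1 + \|x\|^\delta) f(x) \rmd x \leq 1/c$, i.e.\ $f$ has a finite moment of order $\delta$. Combined with $f$ being bounded and $L$-Lipschitz (\ref{cond:reg_f}), the usual bounded-Lipschitz-plus-moment argument yields polynomial decay: letting $s := \sup_{\|x\| > R} f(x)$ and picking $x_0$ almost attaining the supremum, the Lipschitz property forces $f \geq s/2$ on a ball of radius $s/(2L)$ around $x_0$, whose contribution to the $\delta$-th moment is at least a constant times $s^{d+1} R^\delta$ once $R$ is large. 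Matching with the upper bound $1/c$ yields $s = O(R^{-\delta/(d+1)})$, so $\sup_{\|x\|>n^{s_0}} f(x) = O(n^{-s_0 \delta/(d+1)}) = o(1)$ for any $s_0 \geq 1$.

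For \eqref{eq:unrest1}, the plan is to exploit the representation $f_n = N_n/D_n$ together with the almost-sure convergence $D_n \to 1$, which was proved inside \Cref{prop:initial_bound} and whose argument only uses the lower bound $q_0 \geq c f$ provided by \ref{cond:tail_f} (the boundedness of $q_0$ is not needed there). It then suffices to control $\sup_{\|x\|>n^{s_0}} N_n(x)$, which I would do by splitting the sum defining $N_n(x)$ according to whether $\|X_k\| \leq \|x\|/2$ or $\|X_k\| > \|x\|/2$. In the first regime, $\|x - X_k\| \geq \|x\|/2 \geq n^{s_0}/2$, and \ref{cond:tail} gives $K_{h_n}(x - X_k) \leq C h_n^{r_K - d} n^{-s_0 r_K}$; summing against the normalized weights, whose total is bounded by $D_n$ and eventually by $2$, yields a contribution of size $O(h_n^{r_K - d} n^{-s_0 r_K})$. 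In the second regime, the chain $q_{k-1}(X_k) \geq \lambda_{k-1} q_0(X_k) \geq \lambda_n c (n^{s_0}/2)^\delta f(X_k)$ coming from \ref{cond:tail_f} and the monotonicity of $(\lambda_k)$ yields $W_k \leq (\lambda_n c (n^{s_0}/2)^\delta)^{-1}$, which is $\leq 1$ for $n$ large. Setting $\eta_0 := \inf_k \eta_k$, positive since $\eta_k \in (0,1]$ with $\eta_k \to 1$ under \ref{cond:sequences}(iii), and using that $\eta \mapsto W_k^\eta$ is non-increasing on $[0,1]$ whenever $W_k \leq 1$, I get $W_k^{\eta_k} \leq (\lambda_n c (n^{s_0}/2)^\delta)^{-\eta_0}$; combined with $K_{h_n} \leq K_\infty h_n^{-d}$, this part is bounded by $C (\lambda_n n^{s_0})^{-\delta \eta_0} h_n^{-d}$.

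The main obstacle is then to show that both contributions tend to $0$ for a single finite choice of integer $s_0$. The trick lies in \ref{cond:sequences}(i)--(ii): the conditions $\log(n)/(n\lambda_n) \to 0$ and $\log(n)/(n h_n^d \lambda_n) \to 0$ imply that $\lambda_n^{-1}$ and $h_n^{-d}$ grow at most polynomially in $n$; more precisely, $\lambda_n^{-1}, h_n^{-d} \leq n/\log n$ eventually. Substituting these polynomial bounds shows that the first contribution is $O(n^{(d-r_K)/d - s_0 r_K})$ (trivially $o(1)$ if $r_K \geq d$, and using $h_n^{r_K - d} \leq (n/\log n)^{(d-r_K)/d}$ otherwise), and the second is $O(n^{1 + \eta_0 - s_0 \delta \eta_0})$. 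Choosing any integer $s_0 > \max\{1/r_K - 1/d,\, (1+\eta_0)/(\delta \eta_0)\}$ makes both exponents strictly negative, so both bounds are deterministically $o(1)$. Combined with $D_n \to 1$ a.s., this yields \eqref{eq:unrest1} and completes the proof of the lemma.
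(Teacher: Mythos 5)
Your proof is correct but departs from the paper's on both halves, in ways worth spelling out. For \eqref{eq:unrest2} the paper uses a one-line pointwise argument: \ref{cond:tail_f} gives $f(x) \leq q_0(x)/(c(1+\|x\|^\delta))$ and $q_0$ is bounded under \ref{cond:lowerbound} (which is among the lemma's hypotheses), so $\sup_{\|x\|>n^{s_0}} f(x) \leq \|q_0\|_\infty /(c(1+n^{s_0\delta})) = o(1)$ already for $s_0=1$. Your moment-plus-Lipschitz argument is sound, but it invokes \ref{cond:reg_f}, which is not in the lemma's stated hypothesis list (it is assumed in \Cref{prop:initial_bound2}, so nothing breaks downstream, but strictly the lemma would have to be restated); the direct route is both shorter and hypothesis-faithful. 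For \eqref{eq:unrest1} you use the same near/far particle split as the paper (theirs at the fixed radius $A=n^{s_0}/2$, yours at $\|x\|/2$, equivalent over $\|x\|>n^{s_0}$) and the same treatment of the near particles via \ref{cond:tail}. The genuine difference is the far particles: the paper controls $\sum_k W_k^{\eta_k}\mathbb I_{\{\|X_k\|>A\}}$ stochastically, via a compensator bound of order $nA^{-\delta}$ followed by Freedman's inequality and Borel--Cantelli, whereas you observe that \ref{cond:tail_f} makes the \emph{individual} weights of far particles deterministically small, $W_k \leq (c\lambda_n (n^{s_0}/2)^\delta)^{-1}$, and push this through the exponent using $\inf_k \eta_k>0$ (valid since $\eta_k\in(0,1]$ and $\eta_k\to1$). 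Your route avoids any additional concentration step — the only stochastic ingredient is the already-established $D_n\to1$ a.s., which indeed requires only \ref{cond:sequences} and \ref{cond:lowerbound} — at the cost of a slightly worse polynomial rate ($n^{-s_0\delta\eta_0}$ in place of $n^{-s_0\delta}$), which is immaterial since the lemma only asks for the existence of some large $s_0$. The polynomial-growth bounds $\lambda_n^{-1}, h_n^{-d} \leq n/\log n$ that you extract from \ref{cond:sequences} to close the argument are correct and are exactly what the paper implicitly relies on as well.
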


\begin{proof}
	We start with \eqref{eq:unrest1}. Let $n \geq 1$ and set $A = n^{s_0} / 2$ with $s_0 \in \nset$. For all $x \in \rset^d$, we have the following decomposition
	\begin{align}\label{eq:2term}
		f_n(x) = \sum_{k=1} ^n W_{n,k}^{(\eta_k)} K_{h_n} (x-X_k) \mathbb I_{\{\|X_k\| \leq A \}} +  \sum_{k=1} ^n W_{n,k}^{(\eta_k)} K_{h_n} (x-X_k)  \mathbb I_{\{\|X_k\|> A \}}  \eqsp.
	\end{align}
	Our goal is to prove that for $s_0$ large enough, both terms on the r.h.s of \eqref{eq:2term} go to $0$. We start by studying the first term of the r.h.s.
	
	(i) Proof for the first term of the r.h.s of \eqref{eq:2term}.  For any $ \|x\| > n^{s_0}$, we can write
	\begin{align*}
		\sum_{k=1} ^n W_{n,k}^{(\eta_k)} K_{h_n} (x-X_k) \mathbb I_{\{\|X_k\| \leq A \}} &\leq \sup_{1\leq k\leq n} \sup_{\|y\| \leq A} K_{h_n} (x-y) \\
		& \leq C_K h_n ^{-d}   \sup_{\|y\| \leq A,\, \|x\|>n^{s_0}  } (1+\|x-y\|/h_n) ^{-r_K}  \\
		&\leq C_K h _n ^{-d}   \sup_{\|y\| \leq A,\, \|x\|>n^{s_0}  } (1+\|x-y\|/h_1) ^{-r_K} \\
		&\leq C_K h _n ^{-d}  (1+ n^{s_0} / (2h_1)  ) ^{-r_K},
	\end{align*}
	where the last inequality follows from the fact that for all $x,y \in \rset^d$, $|\|x\|- \|y\|| \leq \|x-y\|$. We can then ensure that the previous term goes to $0$ by letting $s_0$ be large enough.
	
	(ii) Proof for the second term of the r.h.s of \eqref{eq:2term}. For the second term of the r.h.s, we have
	\begin{align}\label{eq:2term:lemma:unres}
		\sum_{k=1} ^n W_{n,k}^{(\eta_k)} K_{h_n} (x-X_k) \mathbb I_{\{\|X_k\|> A \}}
		&= \left( \sum_{k=1} ^n W_{k}^{\eta_k}  \right)^{-1}  \sum_{k=1} ^n W_{k}^{\eta_k}  K_{h_n} (x-X_k)  \mathbb I_{\{\|X_k\|> A \}} \nonumber\\
		&\leq \left( \sum_{k=1} ^n W_{k}^{\eta_k}  \right)^{-1} K_\infty h_n^{-d}   \sum_{k=1} ^n  W_{k}^{\eta_k}   \mathbb I_{\{\|X_k\|> A \}} \eqsp,
	\end{align}
	and we are thus interested in studying the r.h.s of \eqref{eq:2term:lemma:unres}. A first remark is that using \Cref{prop:intial_bound_z2}, we obtain that almost surely
	\begin{align}\label{eq:3}
		\left( \sum_{k=1} ^n W_{k}^{\eta_k}  \right) ^{-1} = n^{-1}  (1 + o(1) ) ^{-1} \eqsp .
	\end{align}
	We now move on to the study of $\sum_{k=1} ^n  W_{k}^{\eta_k}   \mathbb I_{\{\|X_k\|> A \}}$ in \eqref{eq:2term:lemma:unres}. To do so, for all $k \geq 1$, let us define $p_k(A) =  \mathbb E [ W_{k}^{\eta_k}   \mathbb I_{\{\|X_k\|> A \}} | \mathcal F_{k-1} ] $ and $Z_k^{(3)}(A) = W_{k}^{\eta_k}   \mathbb I_{\{\|X_k\|> A \}} - p_k(A)$ so that
	\begin{align*}
		\sum_{k=1} ^n  W_{k}^{\eta_k}   \mathbb I_{\{\|X_k\|> A \}}  =   \sum_{k=1} ^n Z_k^{(3)}(A) + \sum_{k=1} ^n p_k (A) \eqsp.
	\end{align*}
	Then, for all $k \geq 1$, it holds that
	\begin{align*}
		p_k(A) &= \int_{\|x\|>A} f(x)^{\eta_k} q_{{k-1}} (x) ^{1-\eta_k} \, \rmd x\\
		&\leq (c\lambda_{{k-1}} ) ^{1-\eta_k} \int_{\|x\|>A} f(x) \, \rmd x \\
		&=  (c\lambda_{{k-1}} ) ^{1-\eta_k} p(A) \eqsp,
	\end{align*}
	with $p(A) \eqdef  \int _{\|x\| > A} f(x) \, \rmd x $. Using \ref{cond:sequences}, we deduce that there exists a constant $C>0$ such that
	\begin{align*}
		\sum_{k=1} ^n p_k (A)  \leq    C n p(A) \eqsp.
	\end{align*}
	Furthermore, under \ref{cond:tail_f}, Markov's inequality yields
	\begin{align}\label{eq:2term:markov}
		p(A) \leq A^{-\delta} \int \|x\|^\delta f(x) \, \rmd x
	\end{align}
	and as a consequence, we obtain
	\begin{align}\label{eq:1}
		\sum_{k=1} ^n p_k (A)  \leq    C n A^{-\delta} \int \|x\|^\delta f(x) \, \rmd x \eqsp.
	\end{align}
	Additionally, observe that $\sum_{k=1} ^n Z_k^{(3)}(A) $ is a sum of martingale increments so our next step will be to apply \Cref{th:freedman}. For this purpose, note that under \ref{cond:lowerbound} and \ref{cond:tail_f}, we can write for all $k = 1, \ldots , n$, 
	$$
	W_k \leq \lambda_{k-1}^{-1} C_0 (1 + \|X_k\|^\delta)^{-1} \leq \lambda_{n}  ^{-1} C_0 (1 + \|X_k\|^\delta)^{-1}
	$$
	so that
	\begin{align}\label{eq:unifboundZk}
	|Z_{k} ^{(3)}(A)| \leq \lambda_{n}  ^{-\eta_k} C_0^{\eta_k} \sup_{\|x\|>A}  (1+\|x\|  ^{\delta} ) ^{-\eta_k} \leq \lambda_{n}  ^{-1}  C_0 (1 +  A) ^{-\delta}    \eqsp.
	\end{align}
	We now treat the two case $k\geq k_0$ and $k < k_0$ separately.

\begin{itemize}
  \item When $k\geq k_0$ (such that $0< 2\eta_k - 1 \leq 1$), we can write
	\begin{align*}
		\mathbb E[Z_k ^{(3)}(A)^2  |\mathcal F _{k-1}] &\leq \mathbb E[ (f(X_k ) / q_{k-1} (X_k) )^{2\eta_k} \mathbb I_{\{\|X_k\|> A \}} |\mathcal F _{k-1}]\\
		& =  \int  _{\|x\| > A}   f(x )^{2\eta_k }  q_{k-1} (x)^{1 - 2\eta_k } \, \rmd x\\
		&= \int _{\|x\| > A}   ( f(x )/ q_{k-1} (x) )  ^{2\eta_k -1 }  f (x)   \, \rmd x\\
		&\leq (1 / (c \lambda_{k-1})  )^{  2\eta_k - 1  }   p(A) \\
		&\leq   (c \lambda_{n}  )^{-1}  p(A)\\
		&\leq (c \lambda_{n}  )^{-1} A^{-\delta} \int \|x\|^\delta f(x) \, \rmd x
		\eqsp.
	\end{align*}
where we have used \eqref{eq:2term:markov} in the last inequality.

  \item When $k< k_0$, we have $  \mathbb E[Z_k ^{(3)}(A)^2  |\mathcal F _{k-1}] \leq  M A^{-\delta}    $ for some constant $M>0$ that can be deduced from the almost sure bound \eqref{eq:unifboundZk} given just before.
\end{itemize}

It follows that, when $n $ is large enough, for all $k =1,\ldots, n$,
	\begin{align*}
		\mathbb E[Z_k ^{(3)}(A)^2  |\mathcal F _{k-1}]  &\leq   (c \lambda_{n}  )^{-1} A^{-\delta} \int \|x\|^\delta f(x) \, \rmd x\eqsp.
	\end{align*}
	and therefore,
	\begin{align*}
		\sum_{k=1} ^n \mathbb E[Z_{k} ^{(3)}(A)^2 |\mathcal F _{k-1}] & \leq  n (c\lambda_n)^{-1} A^{-\delta} \int \|x\|^\delta f(x) \, \rmd x \eqsp.
	\end{align*}
	Consequently, we can apply \Cref{th:freedman} with $ m =C_0   \lambda_{n}  ^{-1} (1 +  A) ^{-\delta}   $, $v = n (c \lambda_{n}  )^{-1} A^{-\delta} \int \|x\|^\delta f(x) \, \rmd x $ and we obtain that for all $t > 0$
	\begin{align*}
		\mathbb P\Bigg(\Big|\sum_{k=1}^n  Z_{k} ^{(3)}(A)\Big|\geq t\Bigg) &\leq 2\exp\left(-\frac{t^2}{2({v}+t{m}/3)} \right) \eqsp.
	\end{align*}
	Inverting this inequality using \Cref{lemma:ineq:reversal}, we get that, with probability $1-1/n^{2}$,
	\begin{align}
		\nonumber \left|\sum_{k=1}^n Z_{k} ^{(3)}(A)   \right| &\leq \sqrt{  4v  \log(2 n) }  + (2m/3) \log( 2n)\\
		\label{eq:2}&=\sqrt{  4 n (c \lambda_{n}  )^{-1} A^{-\delta} \lr{ \int \|x\|^\delta f(x) \, \rmd x}  \log(2 n) }  + (2/3) C_0   \lambda_{n}  ^{-1} (1 +  A) ^{-\delta}    \log( 2n) .
	\end{align}
	Invoking the Borel-Cantelli lemma we obtain that the previous bound is an almost sure rate.

	Putting together \eqref{eq:3}, \eqref{eq:1} and \eqref{eq:2} in \eqref{eq:2term:lemma:unres}, we obtain the almost-sure bound
	\begin{align*}
		& \left| \sum_{k=1} ^n W_{n,k}^{(\eta_k)} K_{h_n} (x-X_k) \mathbb I_{\{\|X_k\|> A \}} \right|\\
		& = O \left( n ^{-1}  h_n^{-d}  \left(  \sqrt{  n  A^{-\delta} \lambda_{n}  ^{-1}   \log(2 n) }  +   (A^{-\delta} \lambda_{n} ) ^{-1}   \log( 2n) + nA^{-\delta}  \right)\right) \eqsp.
	\end{align*}
	We easily obtain that the previous bound goes to $0$ provided that $s_0$ is large enough, which concludes the proof of \eqref{eq:unrest1}.
	
	As for \eqref{eq:unrest2}, notice that by \ref{cond:tail_f}, for all $\| x \| > n^{s_0}$
	$$
	f(x) \leq \frac{C_0 q_0(x)}{1 + \|x \|^\delta} \leq \frac{C_0 q_0(x)}{1 + n^{s_0 \delta }},
	$$
	we obtain \eqref{eq:unrest2} when $s_0$ is large enough.
	

	

\end{proof}

\section{Proof of \Cref{prop:tuning_eta}}\label{sec:proof_tuning_eta}



\begin{proof} We start by proving (i) and (ii).

Proof of (i) and (ii). First note that for our choice of $\PP$ and $\PQ$, we can write
\begin{align*}
D_1(\PP||\PQ) &= \sum_{\ell =1}^{m_k} W_{k,\ell} \log \lr{ \frac{W_{k,\ell}}{1/m_k}} = \sum_{\ell =1}^{m_k} W_{k,\ell} \log \lr{W_{k,\ell}} + \log(m_k) \\
& \leq \sum_{\ell =1}^{m_k} W_{k,\ell} \lr{W_{k,\ell} -1} + \log(m_k)
\end{align*}
where we have used that $\log(x)\le x-1$ for $x>0$. Thus, we have that
\begin{align}\label{eq:logmk}
D_1(\PP||\PQ) \leq \log(m_k) \eqsp.
\end{align}
In addition, \cite[Theorem 3]{2012arXiv1206.2459V} implies that for all $\alpha \in [0,1]$
$$
0 \leq D_\alpha (\PP||\PQ) \leq D_1 (\PP||\PQ)
$$
where equality is reached if and only if $\PP = \PQ$. Now combining with \eqref{eq:logmk} and by definition of $\eta_{k, \alpha}$ in \eqref{eq:ada}, we deduce that for all $\alpha \in [0,1]$,
$$
0 \leq \eta_{k,1} \leq \eta_{k, \alpha} \leq 1 \eqsp,
$$
and that $\eta_{k, \alpha} = 1$ if and only if $\PP = \PQ$.
	%

Proof of (iii). We assume that $\lim_{k\to \infty} m_k = m$. Hence, for $k$ big enough, $m_k =m$.  Hence we will derive the proof as if $m_k=m$ for all $k.$
 A first remark is that thanks to (ii), it is enough to prove that $\lim_{k\to \infty}   \eta_{k,1}  = 1$  in $L_1$ to obtain that for all $\alpha \in [0,1]$, $\lim_{k\to \infty}   \eta_{k,\alpha}  = 1$ in $L_1$, that is $\lim_{k\to\infty}\mathbb{E} [|\eta_{k,\alpha} - 1|] = 0$.

Since that for all $k \geq 1$,
\begin{align*}
 \eta_{k,1} = 1 - \frac{D_1 (\PP||\PQ)}{\log(m)} = -\frac{\sum_{\ell =1}^{m} W_{k,\ell} \log \lr{W_{k,\ell}}}{\log(m)}
\end{align*}
the proof is concluded if we can prove that, in $L_1$
\begin{align}\label{eq:goalada}
\lim_{k \to \infty} \sum_{\ell =1}^{m} W_{k,\ell} \log \lr{W_{k,\ell}} = - \log(m) \eqsp.
\end{align}


To see this, let us define the two maps $g_1:(w_1,\ldots, w_m) \mapsto \sum_{\ell =1} ^m w_\ell \log(w_\ell )$ and $g_2:(w_1,\ldots, w_m) \mapsto \lr{\sum_{\ell' =1} ^m w_\ell'}^{-1} (w_1,\ldots, w_m)$. Observe then that the map $g_1$ is a continuous transformation (defined on the simplex) and $g_2$ is a continuous transformation on the space of nonnegative weights.

If we further denote by $(\tilde W_{k,\ell})_{\ell = 1}^{m}$ the unnormalised weights (i.e  $\tilde W_{k,\ell} = f(X_{k, \ell}) / q_{k - 1} (X_{k, \ell}) $ for all $\ell = 1 \ldots m$), then it is enough to show that $(\tilde W_{k,1},\ldots, \tilde W_{k,m})$ converges to $(1, \ldots, 1)$ in $L_1$ to prove \eqref{eq:goalada}. 
Since for all $\ell =1,\ldots, m$,
\begin{align*}
 \mathbb E [|  \tilde W_{k,\ell} - 1|   ]  = \int | f  - q_{k-1}   |  \eqsp,
\end{align*}
this follows from Scheffé's lemma and the proof is concluded. 

\end{proof}


\section{Additional Experiments}\label{sec:add_expes}

\begin{figure}[h]
	\centering
		\includegraphics[scale=0.27]{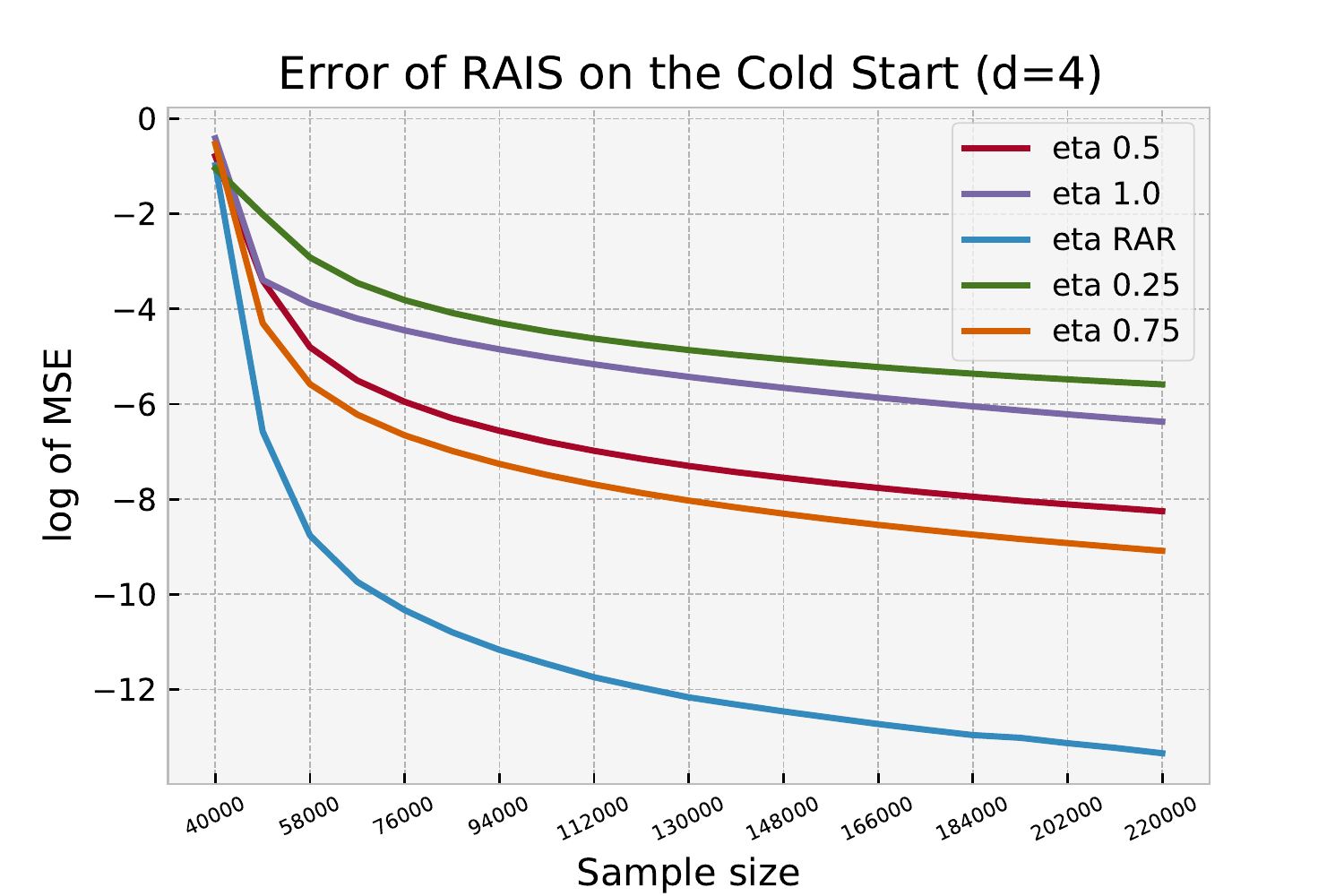}\includegraphics[scale=0.27]{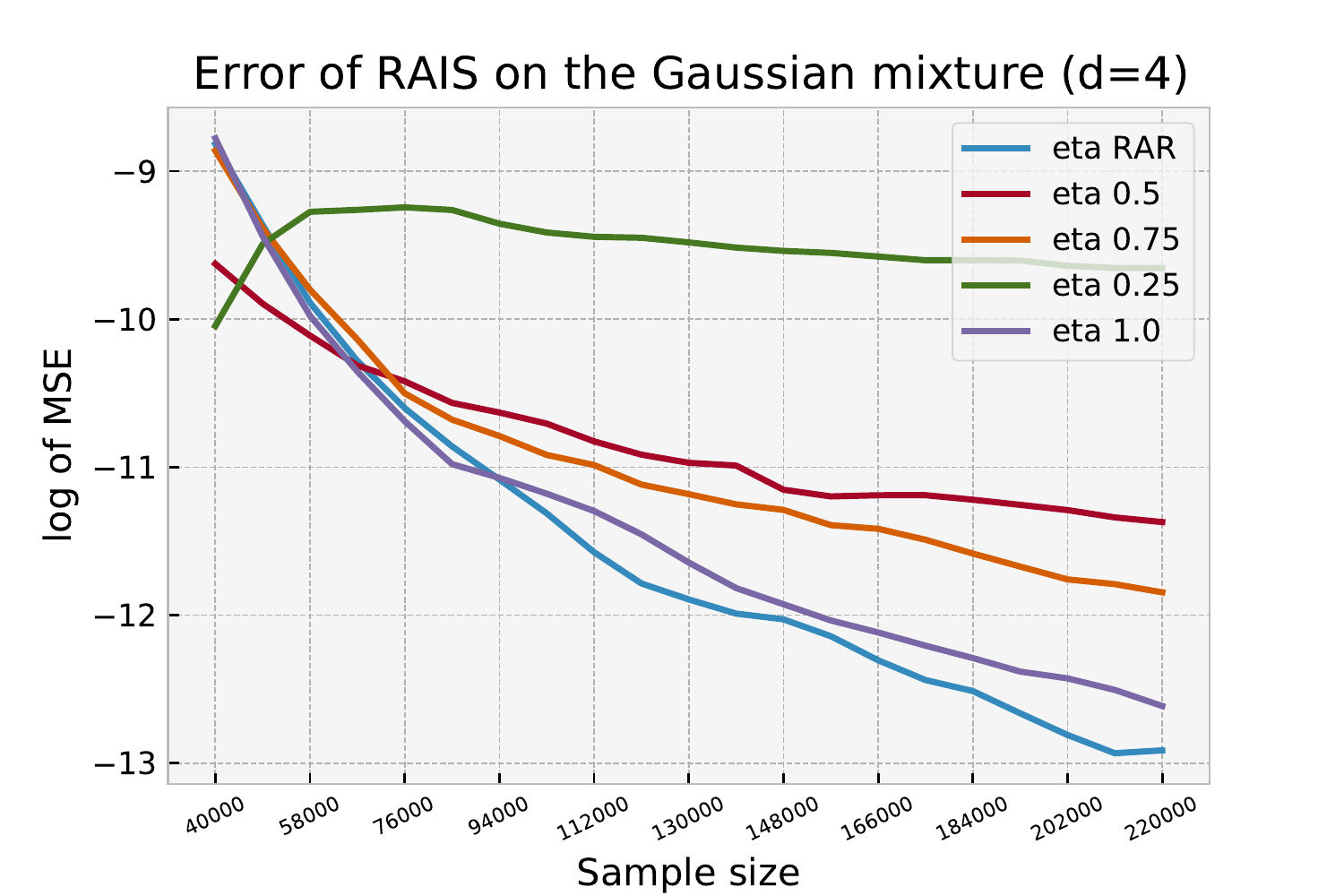}\includegraphics[scale=0.27]{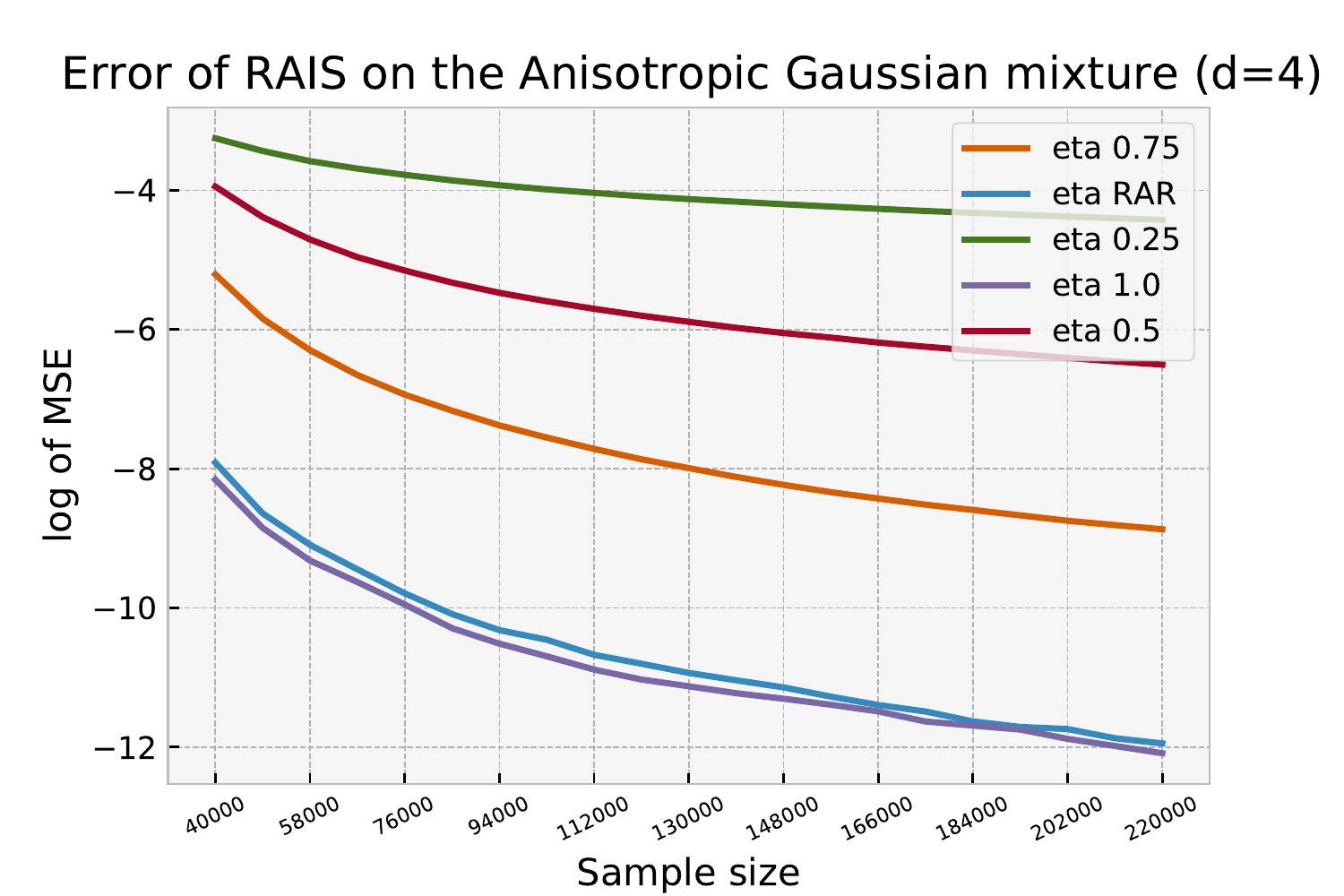} \\		
		\includegraphics[scale=0.28]{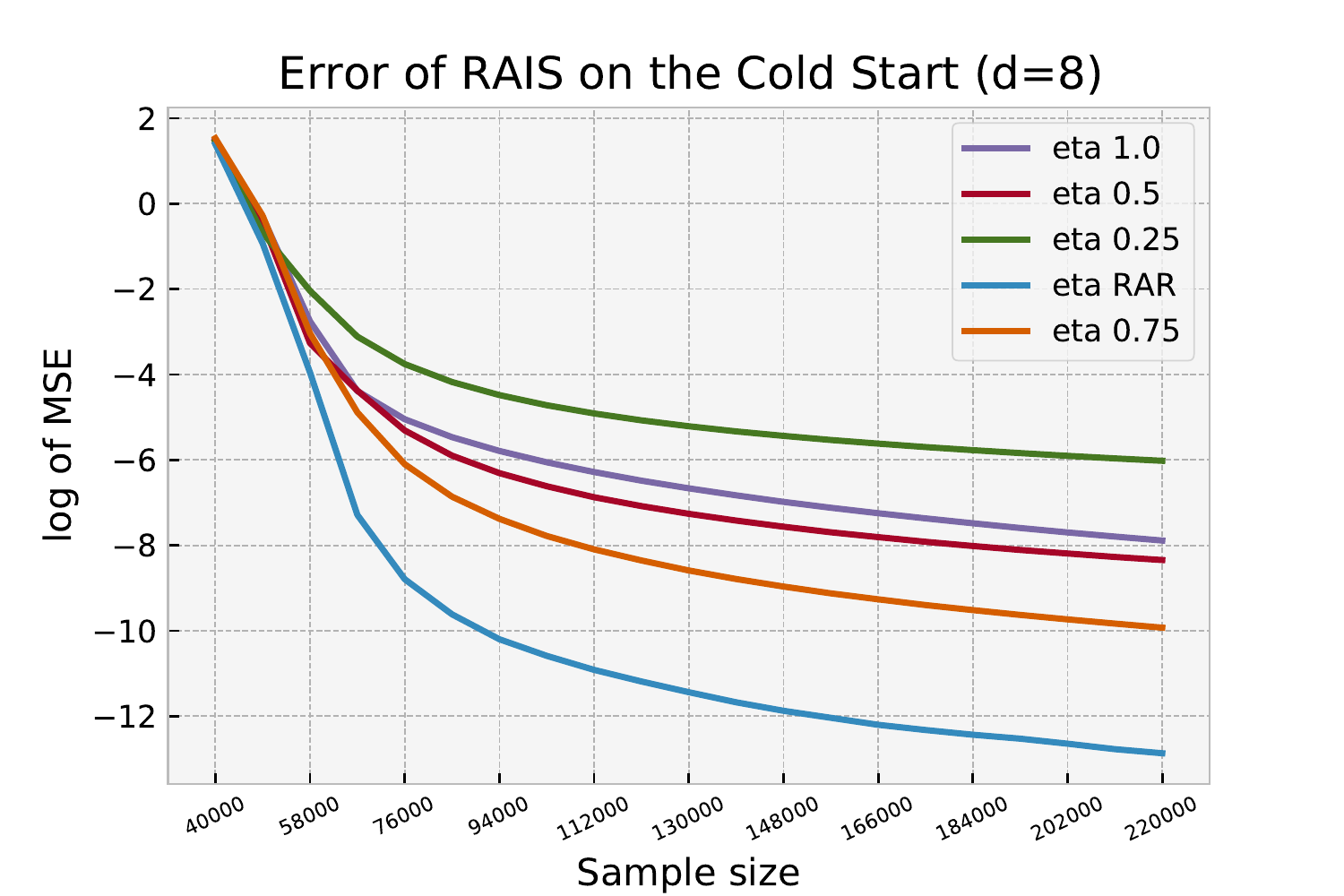}\includegraphics[scale=0.27]{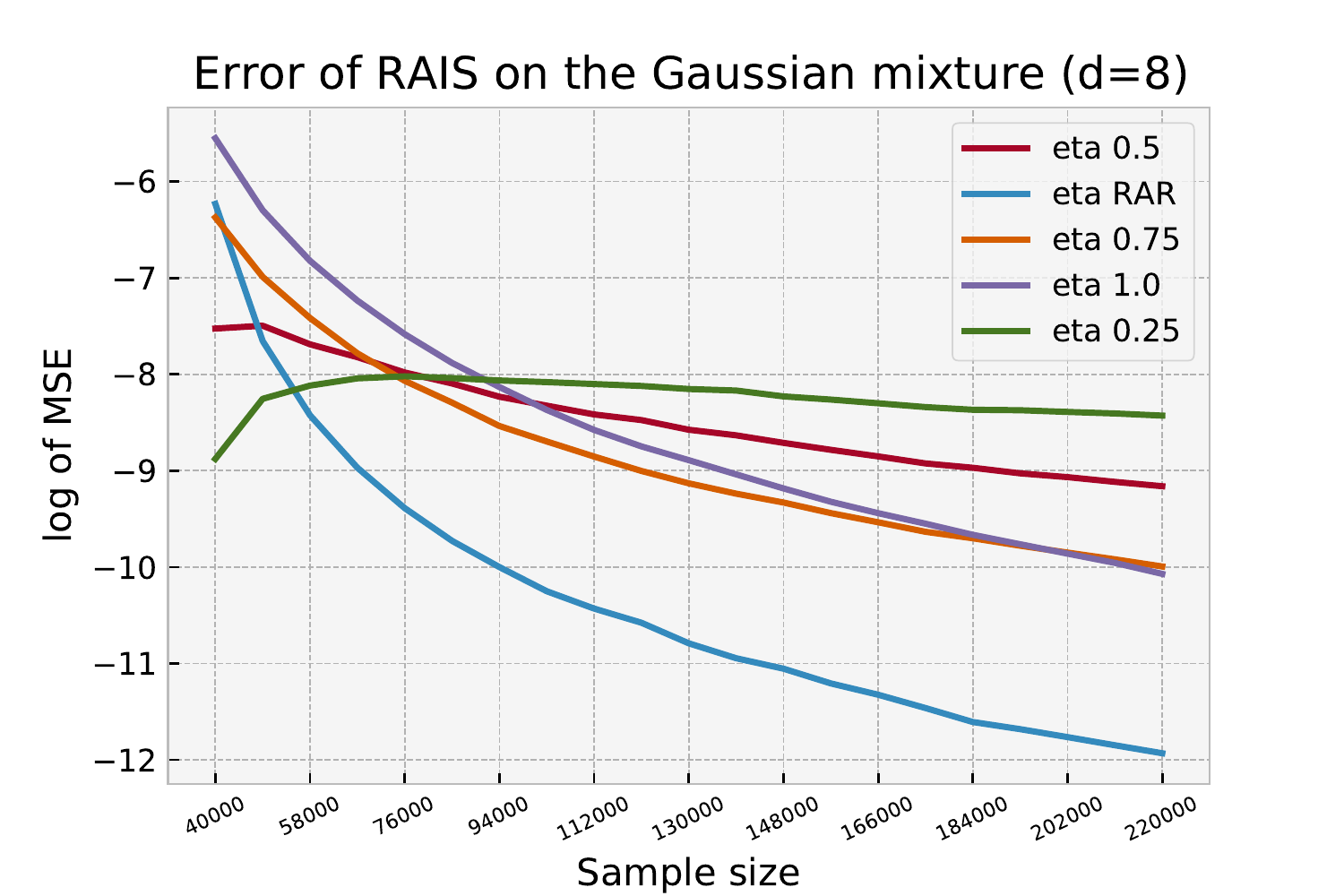}\includegraphics[scale=0.28]{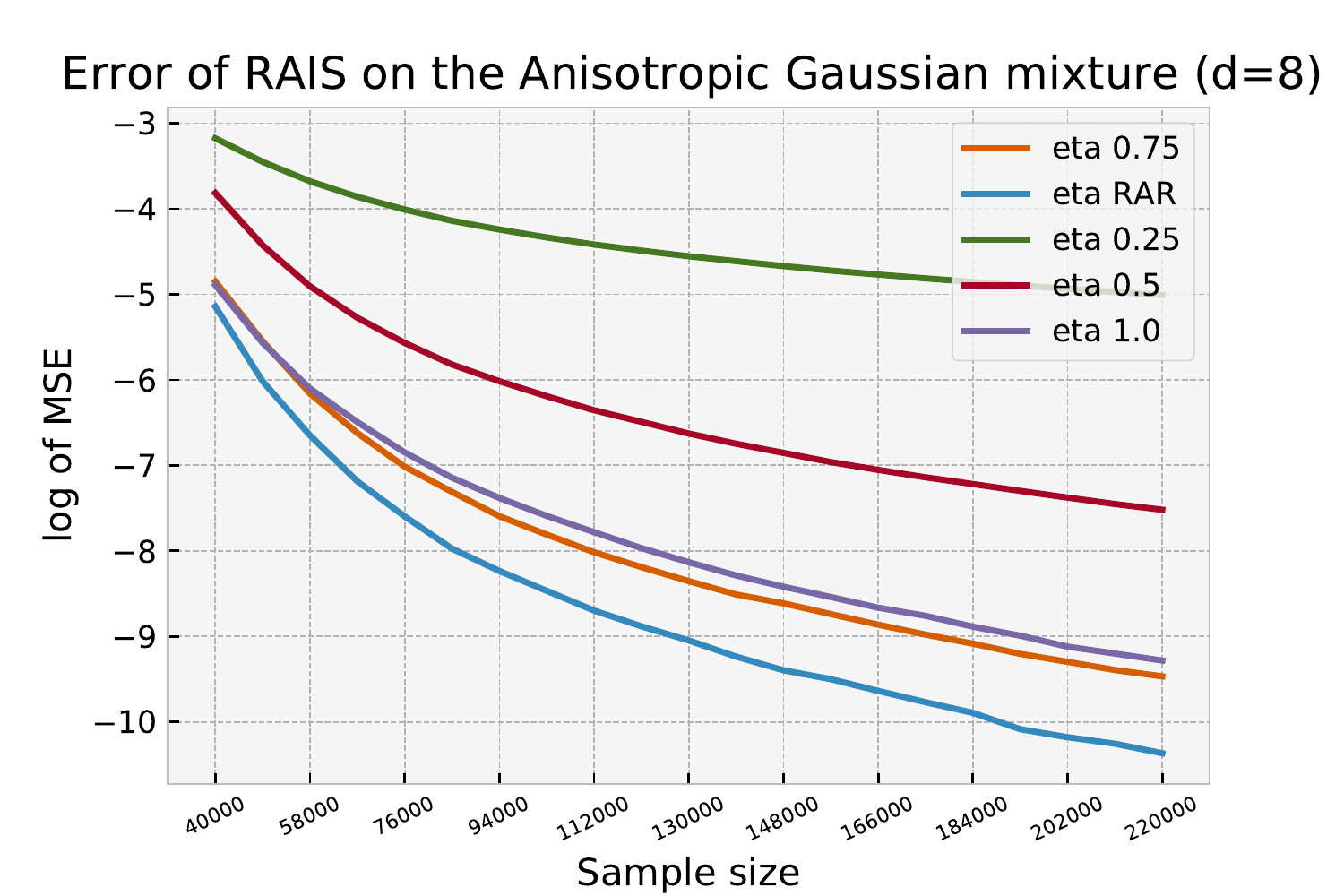}
	
	\caption{Logarithm of the average squared error, computed over 50 replicates. 
	}\label{fig:toy_error2}
\end{figure}

\begin{figure}[h]
	\centering
		\includegraphics[scale=0.28]{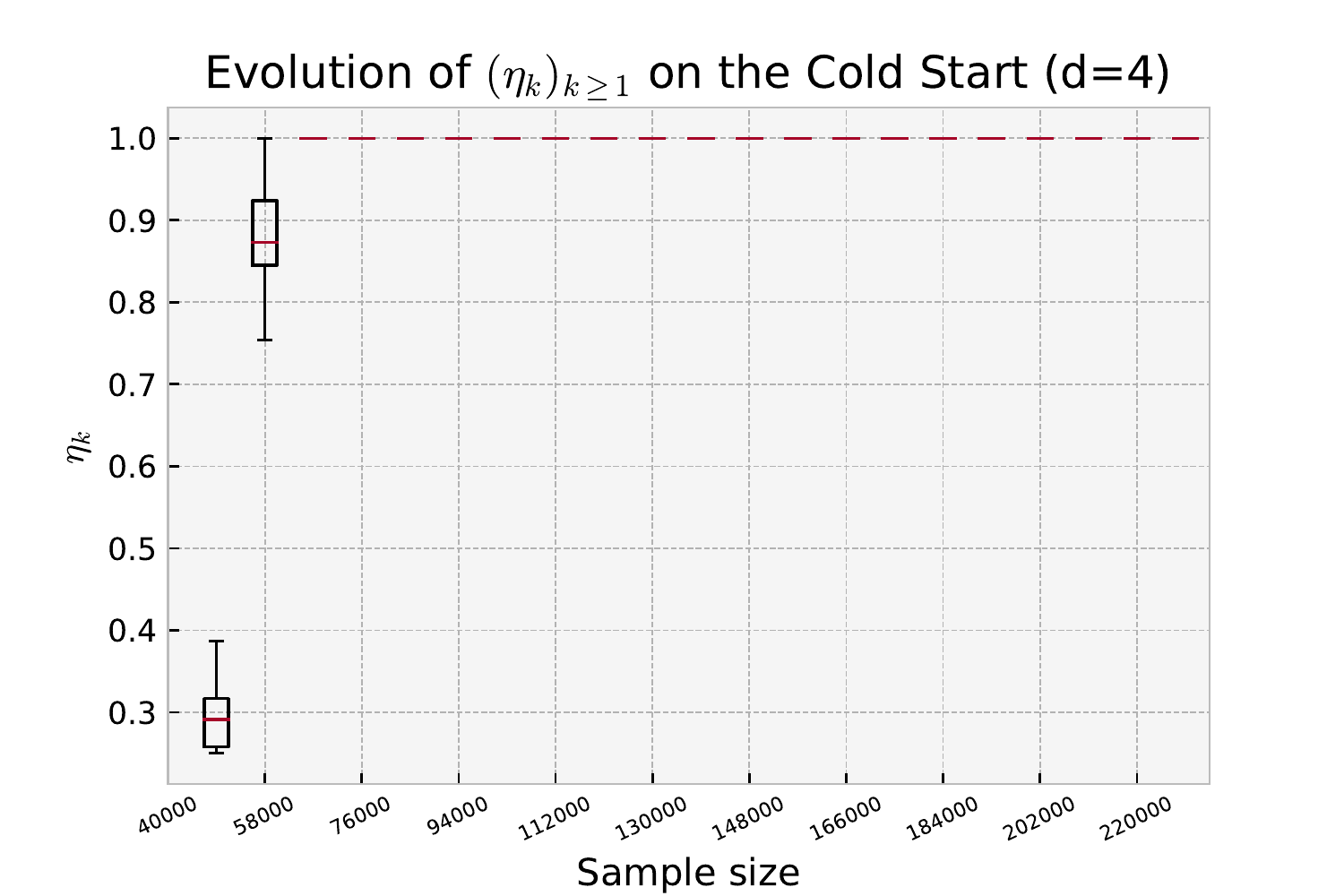}\includegraphics[scale=0.28]{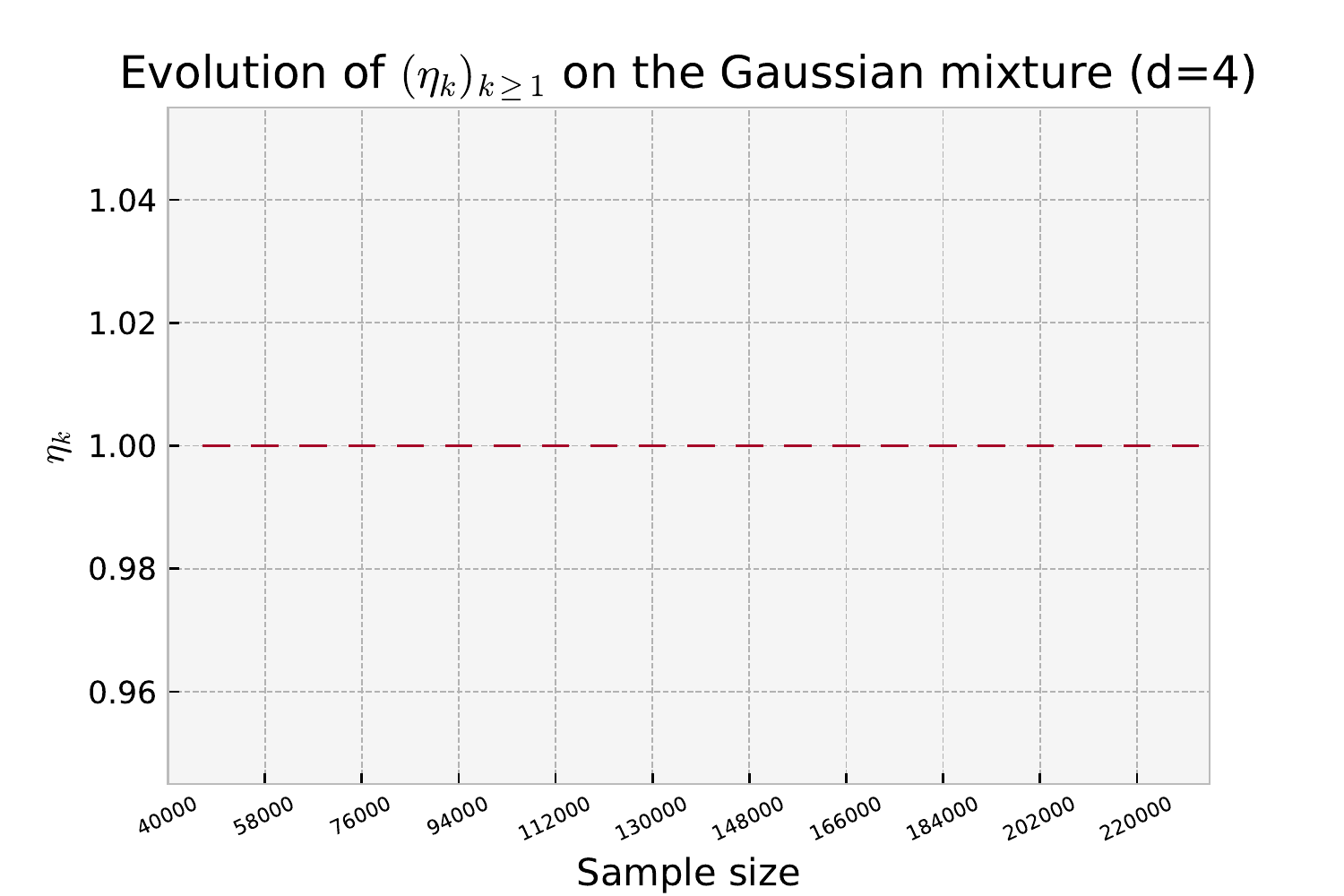}\includegraphics[scale=0.28]{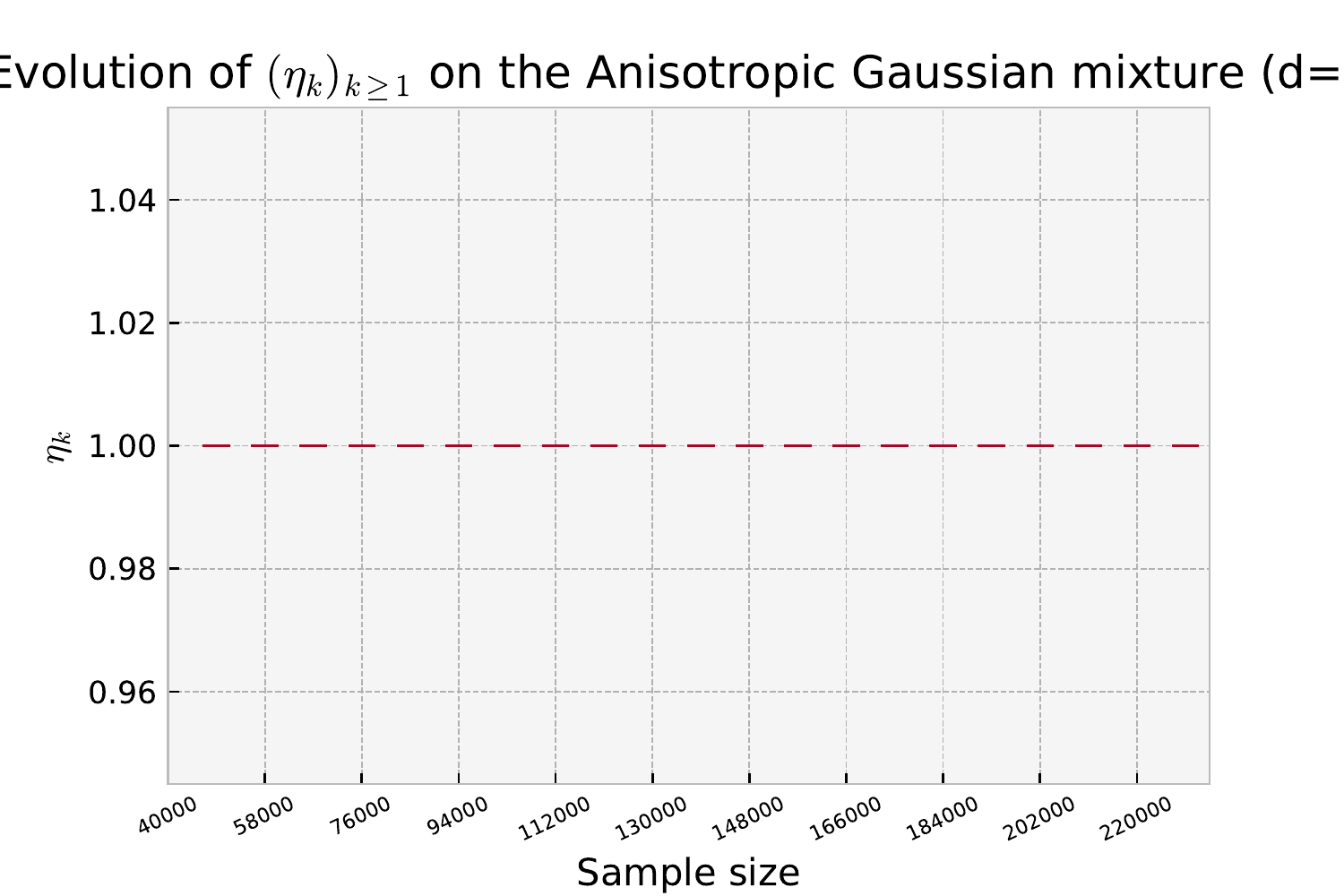}\\ 
		\includegraphics[scale=0.28]{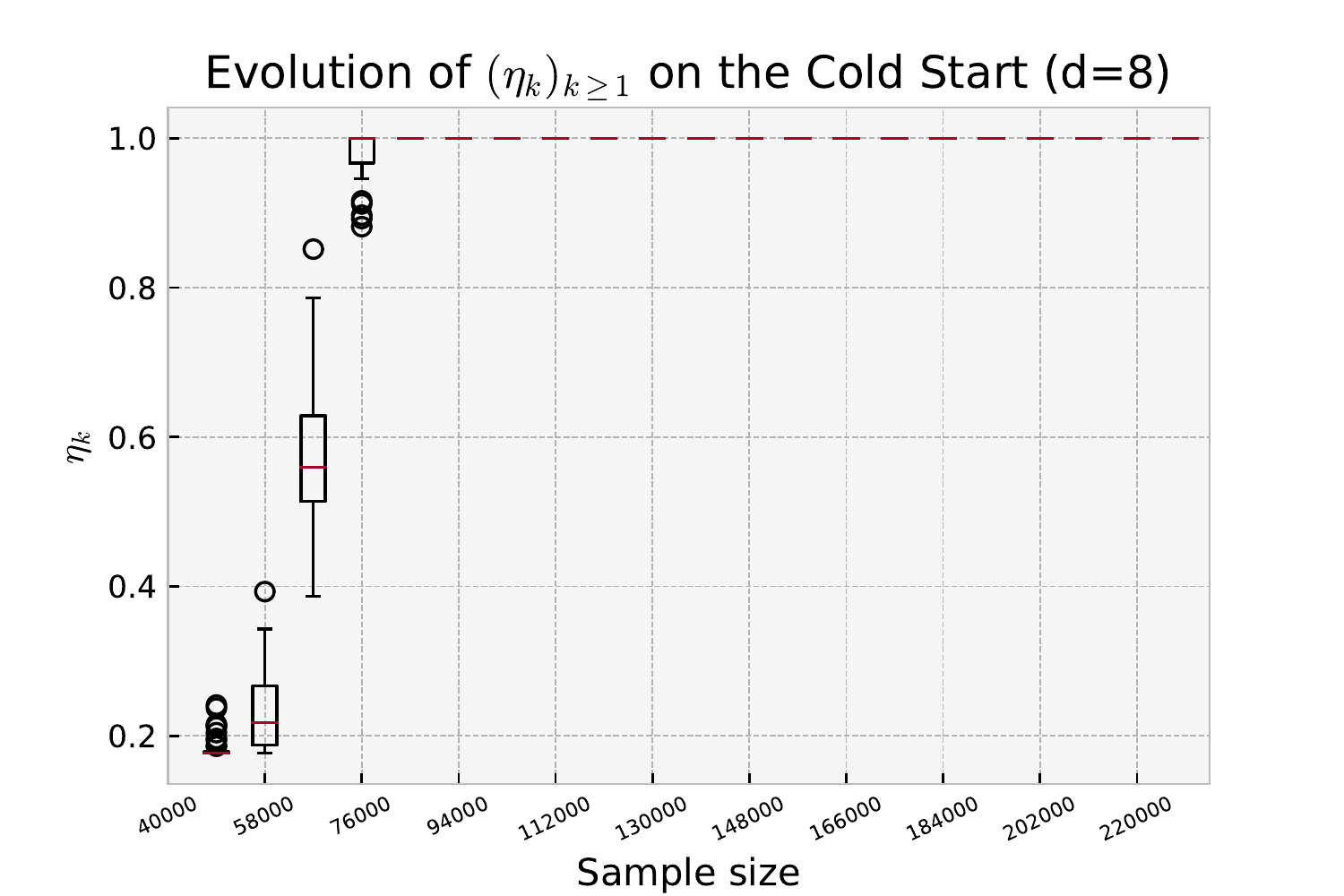}\includegraphics[scale=0.28]{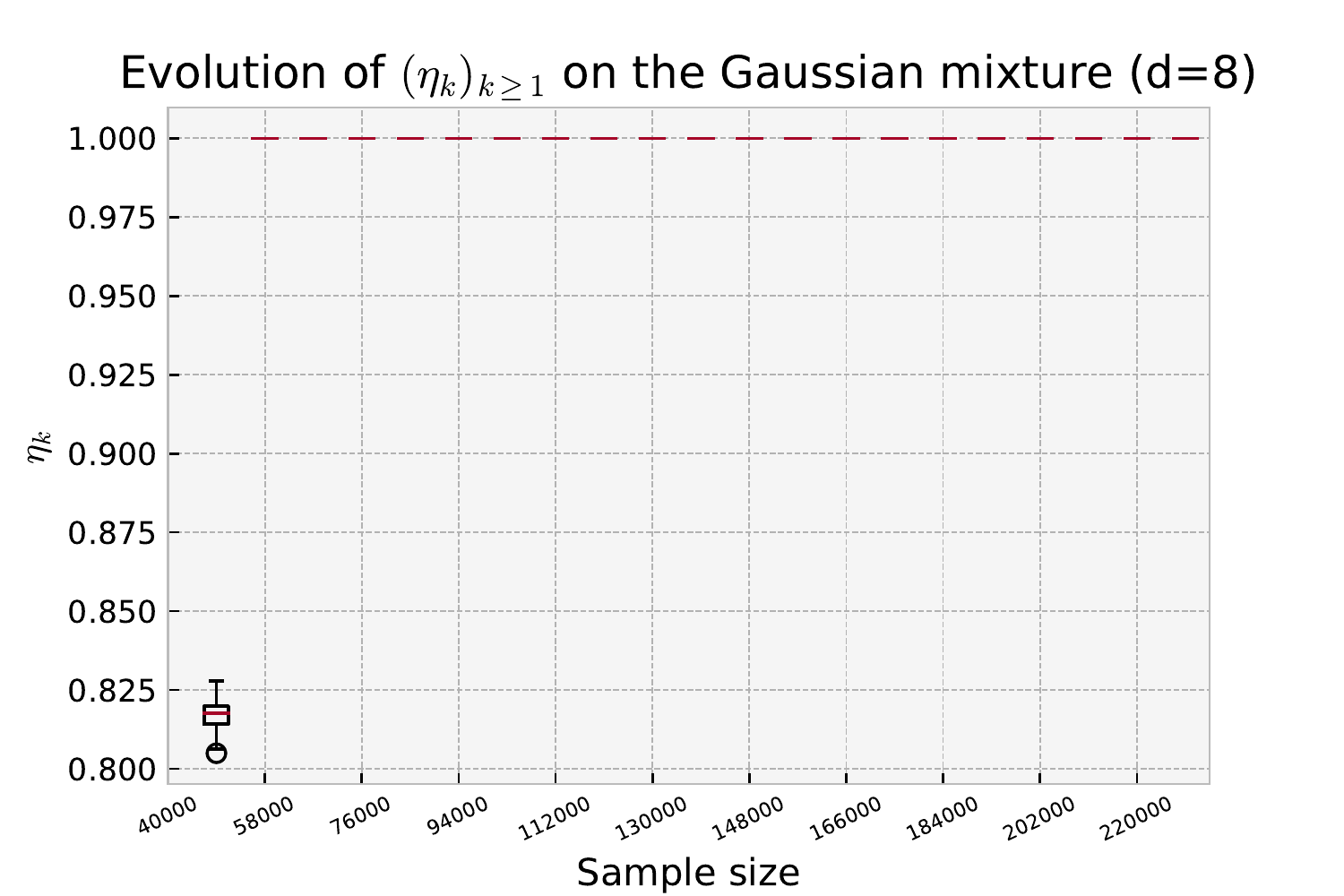}\includegraphics[scale=0.28]{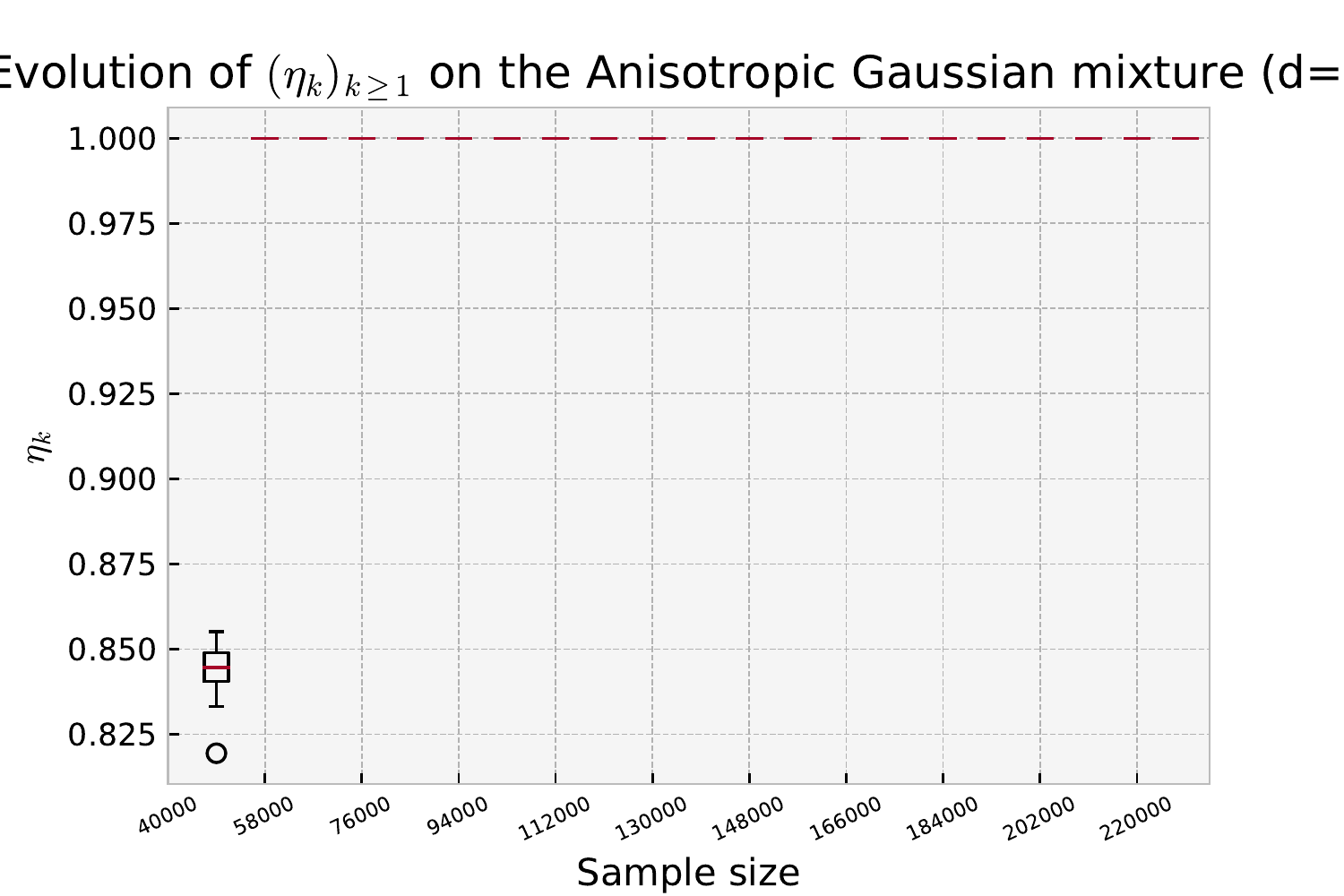} 
	
	\caption{Boxplot of the values of $\eta$ obtained from the ADA strategy.
	}\label{fig:toy_bp2}
\end{figure}

We report in \Cref{fig:toy_error2} the results of the proposed method, and in \Cref{fig:toy_bp2} the evolution of $\eta$ in smaller dimensions. We can see  in \Cref{fig:toy_error2} that \Cref{algo:RIS} along with the subroutine \Cref{algo:ADA} always outperform the competitive schedules for the regularization, and in \Cref{fig:toy_bp2} that  \Cref{algo:ADA} converges to 1.

\end{document}